\newtheorem{theorem}{Theorem}
\newtheorem{corollary}{Corollary}
\newtheorem{remark}{Remark}
\newtheorem{lemma}{Lemma}
\begin{document}
	
	\author{Sumit Kumar, Kummari Mallesham and Saurabh Kumar Singh }
	
	\title{ Subconvex bound for $\textrm{GL(3)} \times \textrm{GL(2)}$ $L$-functions: $\textrm{GL(3)}$-spectral aspect}
	
	\address{ Sumit Kumar \newline {\em Alfr\'ed R\'enyi Institute of Mathematics,
			Budapest, Re\'altanoda utca 13-15, 1053, 
			Hungary  \newline  Email: sumit@renyi.hu
	} }

	\address{ Kummari Mallesham \newline {\em Department of Mathematics, IIT Bombay, Powai, 
			Mumbai, India 400076; \newline  Email: mallesham@math.iitb.ac.in
	} }
	\address{ Saurabh Kumar Singh \newline {\em Department of Mathematics and Statistics, Indian Institute of Technology, Kanpur, India; \newline  Email: saurabs@iitk.ac.in
	} }

	\subjclass[2010]{Primary 11F66, 11M41; Secondary 11F55}
	\date{\today}
	
	\keywords{Maass forms, subconvexity, Rankin-Selberg $L$-functions}.
	\maketitle

	\begin{abstract}
		
		Let $\phi$ be a  Hecke-Maass cusp form for $\mathrm{SL(3, \mathbb{Z})}$ with  Langlands  parameters $({\bf t}_{i})_{i=1}^{3}$ and  $f$ be a holomorphic or Hecke-Maass cusp form  for $\mathrm{SL(2,\mathbb{Z})}$. In this article, we prove  the following subconvex bound 
		$$ L\left(\phi \times f, 1/2\right) \ll_{f,\epsilon} T^{ \frac{3}{2}-\delta_\xi+\epsilon},\  \delta_\xi=\min\{\xi/4, \, (1-2\xi)/4 \}, $$
		for the central value $  L\left(\phi \times f, 1/2\right) $ in the $\mathrm{GL(3)}$-spectral aspect,  where $({\bf t}_{i})_{i=1}^{3}$ satisfies 
		$$|{\bf t}_{3} - {\bf t}_{2}| \asymp T^{1-\xi }, \quad \, {\bf t}_{i} \asymp T,  \quad  \, \, i=1,\,2,\,3,$$
		with   $\xi$   a real number such that  $0 < \xi <1/2$. 
		
		
	\end{abstract}
	
	\tableofcontents
	
	\section{Introduction}
	An  automorphic $L$-function of degree $d$ is given by an absolutely convergent series 
	$$L(s)=\sum_{n=1}^{\infty} \frac{a_{n}}{n^{s}}$$
	for $s=\sigma + i t$ and $\sigma >1$. It has an Euler product over   primes $p$ involving the  reciprocals of degree $d$ polynomials in $p^{-s}$. One  can  complete this $L$-function by multiplying $d$ many gamma factors, and it can be extended meromorphically to the  whole of $\mathbb{C}$ with at most two simple poles at $s=0,1$. Moreover, it  satisfies a functional equation relating the value of  the $L$-function at $s$ with the `dual' $L$-function at  $1-s$.  One of the most interesting and challenging problems in analytic number theory is to estimate  $L(s)$ on the critical line $\Re s =1/2$.  Using the  Phragm\'en-Lindel\"{o}f convexity principle along with the functional equation one  can obtain 
	 $$L(1/2+it) \ll_{\epsilon} C(L)^{1/4 + \epsilon},$$ known as the convexity or trivial bound,  where $C(L)$ is  the   `analytic conductor' of the $L$-function   consisting  of various  parameters ($t$,  level,  spectral parameters etc.) associated to the underlying form.  In many application (see \cite{pmp}), one needs to improve upon the convexity bound. More precisely,  getting  a bound of the form 
	\begin{align}\label{subcon}
		L(1/2+it) \ll_{\epsilon} C(L)^{1/4 -\delta + \epsilon},
	\end{align}
	for some $\delta>0$,  is known as the  subconvexity problem.  Usually, one is   interested in  getting such  bounds  with respect to some family, i.e., varing only  one of the parameters among $t$,  level and  spectral parameters at a time.  The expected size for $L(1/2+it)$, known as the famous  Lindel\"{o}f hypothesis (LH), is  $L(1/2+it) \ll_{\epsilon}  C(L)^{\epsilon},$ which follows from  the   Grand Riemann hypothesis.  LH is still quite far from  reach using existing methods. As such getting subconvex bounds is itself a challanging task. Moreover,   getting subconvexity bounds  becomes even more difficult for higher degree $L$-functions. \\

	Subconvexity bounds for degree one $L$-functions were obtained by Weyl ($t$-aspect) \cite{Tich}  and Burgess \cite{Burg} (level aspect). For degree two, such bounds were   obtained by  Good \cite{gl3gl2ood} ($t$-aspect),   Iwaniec \cite{iwa} (spectral aspect), Duke-Friedlander-Iwaniec   \cite{DFI}, \cite{DFI-2}, \cite{DFI-2.1} (level aspect) and  Michel-Venkatesh  \cite{akii-michel} (uniform in all aspects).  \\

	   Let $\phi$ be a normalised Hecke-Maass cusp form for $SL(3, \mathbb{Z})$ with  spectral parameters $ ({\bf t}_{1}, {\bf t}_{2}, {\bf t}_{3})$. The  standard  $L$-function associated to $\phi$ is given by 
	$$L(\phi,s) = \sum_{n=1}^{\infty} \frac{A(1,n)}{n^{s}}, \ \ \ \Re s>1, $$
	where $A(1,n)$ denote the normalised  Fourier  coefficients of $\phi$.   The first subconvex bound  for  degree three $L$-functions was proved by Li \cite{Li} in the $t$-aspect. She considered $\phi$ to be a self-dual form.  This result was generalised by Munshi  \cite{munshi1} to any $GL(3)$ form (not necessarily self-dual) using the delta method.  In the level aspect (twist by a $GL(1)$ form),  Munshi   \cite{annals}  proved subconvexity for $GL(3)$ $L$-functions using the  $GL(2)$ delta method.     In the spectral aspect,  Blomer-Buttcane \cite{Bl-Butt}, using the amplification method and the  $GL(3)$-Kuznetsov trace formula, obtained the following  subconvexity bound  $$L(\phi, 1/2) \ll T^{3/4-\delta}, \ \ 0 < \delta < 1/120000,$$
	 where the spectral parameters are in  `generic' position, that is,
	 	\begin{align}\label{generic}
	 {\bf t}_{i} \asymp T,  \quad	{\bf t}_{i} - {\bf t}_{j} \asymp  T, \quad \,   \, \, i, j=1,2,3,
	 \end{align}
	 
	for a large parameter $T \geq 1$ (see Subsection \ref{notation} for notations).  Their method does not cover the case when   two  parameters, ${\bf t}_{2}$ and ${\bf t}_{3}$, say,  are near to each other, i.e., ${\bf t}_{3}- {\bf t}_{2}= o(T)$, as  the ``spectral measure" drops out in this situation. The aim of this article is to consider    forms having spectral parameters in non-generic position
		\begin{align}\label{non-generic}
	{\bf t}_{i} \asymp T, \quad 	{\bf t}_{3} - {\bf t}_{2} \asymp  T^{1-\xi } , \, \, i=1,2,3,
	\end{align}
	for some $0<\xi<1$ and $T>0$,  a large parameter.  In fact, we prove a more general result (see Theorem \ref{maintheorem}).  Subconvexity  problem for  ``genuine" degree $d \geq 4$  $L$-functions (except degree 4, 6 and 8 Rankin-Selberg $L$-functions) is still an open problem\footnote{ After  the announcement of our article on arxiv, Nelson \cite{Nel-2021} proved subconvexity for any degree $d$ $L$-functions in the $t$ and some cases of spectral aspect.}. \\
	
 Degree six $GL(3)\times GL(2)$ Rankin-Selberg  $L$-functions are  particularly interesting   due to their connection with the  Quantum Unique Ergodicity. Moreover, there   is  a structural advantage which makes them  more suitable  for  analytic techniques.   Given    a Hecke-Maass cusp form $\phi$ for $SL(3,\mathbb{Z})$ and a holomorphic Hecke cusp form $f$ for $SL(2,\mathbb{Z})$, the Rankin-Selberg    $L$-series  associated to $\phi$ and $f$  is defined as 
	$$L(\phi \times f, s) = \, \mathop{\sum \sum}_{n,r \geq 1} \frac{A(r,n) \, \lambda_{f}(n)}{(n r^2)^{s}}, \, \, \Re(s) >1,$$
	where $A(r,n)$ are the  normalised  Fourier  coefficients of  $\phi$ and $ \lambda_{f}(n)$ are the  normalised  Fourier  coefficients of  $f$.  The first subconvex bound for such $L$-functions was proved by Li \cite{Li} in the $GL(2)$-spectral aspect. The non-negativity of $L(\phi \times f, 1/2)$ is a crucial input in her method due to which  she needed to assume  $\phi$ to be a self-dual form. Using  Li's approach, Blomer \cite{Blo-2012} obtained level aspect (twist by quadratic character) subconvex bounds. This approach was further explored by  Khan \cite{Kha-2015} to prove the $GL(2)$  level aspect subconvexity conditionally.  A drawback in  the previous approach  is the difficultly  to adapt it for any $GL(3)$  form.   Recently,  Munshi \cite{munshi12}, using his delta method,   proved  subconvexity  for $L(\phi \times f,1/2+it)$ in the $t$-aspect for any $GL(3)$ form $\phi$. His method is insensitive to the self-duality of $\phi$.   Using similar ideas,   Sharma (\cite{prahlad}) obtained  a subconvexity bound for the central values $L(\phi \times f \times \chi, 1/2)$ in the twist aspect, where $\chi$ is a non-trivial character modulo $p$, a prime, thus generalised the result of Blomer \cite{Blo-2012} to any $GL(3)$ forms and non-trivial character.   Lin, Michel and  Sawin  \cite{Mic-Linn} further  extended the result of Sharma \cite{prahlad} to  `generic' trace functions. \\
	
In the  present article, we explore Munshi's approach  \cite{munshi12} further to   obtain  subconvexity for    $L(\phi \times f, 1/2) $ in the  $GL(3)$-spectral aspect, i.e., varying $\phi$ in non-generic position (see \eqref{non-generic}).    
	\begin{theorem} \label{maintheorem}
		Let $\phi$ be a Hecke-Maass cusp form for  $SL(3, \mathbb{Z})$ with  spectral parameters $({\bf t}_{1},{\bf t}_{2},{\bf t}_{3})$   satisfying \eqref{non-generic}  and  $f$ be a Holomorphic Hecke cusp form for $SL(2,\mathbb{Z})$. Then, for  $0 < \xi <1/2$,  we have
		
		$$L(\phi \times f, {1}/{2}) \ll_{f,\epsilon} \max \{ T^{\frac{3}{2}-\frac{\xi}{4}+\epsilon} , T^{\frac{3}{2}-\frac{1-2 \xi}{4}+\epsilon} \},$$
		for any $\epsilon>0$.
	\end{theorem}
	
	\begin{remark}
		Our method  can also be adapted  for $GL(2)$ Eisenstein series.  Thus we also get subconvexity for $L(\phi,1/2)$   when the  spectral parameters  satisfy  \eqref{non-generic} with $0<\xi<1/2$. Hence we complement Blomer-Buttcane's  result  \cite{Bl-Butt}. 
	\end{remark}
	\begin{remark}
		Using the same approach, along with the `mass transfer trick' (or amplification trick),  one can also  obtain  subconvexity whenever the spectral parameters are in generic position (see \eqref{generic})  under the assumption that the $GL(3)$ coefficients are `large' in $L^2$-sense.   
	\end{remark}

	\subsection{Main inputs in  the proofs } We use  the  delta  method approch.  This approach has an advantage of introducing extra `harmonics' in  exponential  sums  in which we seek some cancellations.  More specifically,   to prove our result,   we need to show some cancellations in  the   following sum
	$$\sum_{n \sim N} A(1,n) \, \lambda_{f}(n),$$
	where $N\sim T^{3}$.  
	As a first step, we use  the delta  method to seprate the oscillatory factors $A(1,n)$ and $\lambda_{f}(n)$, and while separating them we introduce new harmonics in the above sum which helps  in  lowering  the `conductor'. In fact,  we rewrite the original sum  as
	
	$$\frac{1}{K} \int W \left(\frac{v}{K} \right) \mathop{\sum \sum}_{n,m \sim T^{3}} A(1,n) n^{-i({\bf t}_{3}+v)} \, \lambda_{f}(m) \, m^{i({\bf t}_{3}+v)} \delta(n-m) \, \mathrm{d}v,$$
	where $K$ is a parameter with $T^{\epsilon }<K<T^{1-\epsilon}$ and $W$ is a smooth bump function with $\int W=1$. Note that, while separating the oscilllations, we have introduced a $v$-integral and a factor $(m/n)^{i{\bf t}_{3}}$. The $v$-integral helps in reducing the size of the equation $n=m$. Indeed, it is negligibly small unless  $n-m \ll T^{3}/K$. Now, using the delta method, we take advantange of the  `smaller  modulus'.  The factor $(m/n)^{i{\bf t}_{3}}$ plays a crucial role in  lowering the conductor while applying the $GL(3)$ Voronoi formula. This is a crucial input in the paper.
	
	\subsection{Sketch of the proof of Theorem \ref{maintheorem}}
	In this subsection, we discuss rough ideas  behind the proof of Theorem \ref{maintheorem}. Details are given from  Section \ref{circlemethod123} onwards. Using the  approximate functional equation, it boils down  to   getting non-trivial  cancellations  in the following sum (see Lemma \ref{AFE})
	$$S = \sum_{n \sim T^{3}} \, A(1,n) \, \lambda_{f}(n).$$
	In other words, we need to show $S \ll N^{1-\delta}$ for some $\delta>0$ ($S\ll N$ is the trivial bound).
	Note that $n$ is of size  $T^3$, which is the square root of the analytic conductor $T^6$ of the $L$-values $L(1/2,\phi \times f)$. 
	
	As a first step we apply the  delta method, by which we separate  the oscillatory factors  $A(1,n)$ and $\lambda_{f}(n)$ along with the conductor lowering trick. To this end, we rewrite $S$ as follows
	$$ S = \frac{1}{K} \int W \left(\frac{v}{K} \right) \mathop{\sum \sum}_{n,m \sim T^{3}} A(1,n) n^{-i({\bf t}_{3}+v)} \, \lambda_{f}(m) \, m^{i({\bf t}_{3}+v)} \delta(n-m) \, \mathrm{d}v,$$
	where $T^{\epsilon} < K < T^{1-\epsilon}$ is a parameter $K\asymp T^{1-\xi}$  and $\int W(v)\mathrm{d}v =1$.  Observe that the $v$-integral reduces the size of the equation $n-m$ by $K$ (thus the $v$-integral helps in  reducing the size of the modulus $Q$, which comes up while  applying the DFI delta method).  Note  the presence of  the term $(n/m)^{-i{\bf t}_{3}}$ which we introduced artificially. Indeed it is a crucial input in the paper.  In fact, it  serves  to ``kill" one of the spectral parameter ${\bf t}_{3}$ while applying the $GL(3)$ Voronoi  and hence it helps in  lowering  the conductor. On applying the DFI  delta method to  $\delta(n-m)$,  we arrive at
	\begin{align*}
		\frac{1}{Q^2 K} \int_{K}^{2K} & \sum_{q \sim Q} \,  \sideset{}{^\star}{\sum}_{a \, {\rm mod} \, q} \, \sum_{n \sim T^{3}} A(1,n) n^{-i({\bf t}_{3}+v)} e\left(\frac{an}{q}\right) \\
		& \times \sum_{m \sim T^{3}} \, \lambda_{f}(m) \, m^{i({\bf t}_{3}+v)} e\left(\frac{-am}{q}\right) \, \mathrm{d}v,
	\end{align*}
	where $Q=\sqrt{T^{3}/K}$. At this stage, on  estimating the above sum  trivially, we see that 
	$S \ll T^{6+\epsilon}.$ Thus   we need to save $T^{3}$ (and a little more) in the following expression 
	$$\int_{K}^{2K} \sum_{q \sim Q} \,  \sideset{}{^\star}{\sum}_{a \, {\rm mod} \, q} \, \sum_{n \sim T^{3}} A(1,n) n^{-i({\bf t}_3+v)} e\left(\frac{an}{q}\right) \, \sum_{m \sim T^{3}} \, \lambda_{f}(m) \, m^{i({\bf t}_3+v)}  e\left(\frac{-am}{q}\right) \,\mathrm{d}v,$$
	where by the term ``  saving", we mean $ \mathrm{initial \ bound} / \mathrm{final \ bound}$.
	In the next step we apply the  Voronoi summation formulas to  the sum over   $m$ and $n$. On applying the  $GL(3)$ Voronoi summation formula to the $n$-sum, the length of the dual sum becomes $n^{*} \sim Q^3 K^2 T /T^{3}$ and we save $T^{3}/\sqrt{Q^3 K^2 T}$ in this step.
	
	The  $GL(2)$ Voronoi  converts the $m$-sum into a dual sum of the length $m^{*} \sim Q^2 T^{2}/T^{3}$ and this step  gives a saving of the size $T^{3}/ \sqrt{Q^2 T^{2}} $. 
	
	We also save $\sqrt{Q}$ in the $a$-sum and $\sqrt{K}$ in the $v$-integral. So far we have saved 
	$$ \frac{T^{3}} {\sqrt{Q^3 K^2 T}} \frac{T^{3}}{\sqrt{Q^2 T^2}} \sqrt{Q} \sqrt{K} = T^{3} \frac{K^{1/2}}{T^{3/2}}.$$
	Therefore, we need to save $T^{3/2}/K^{1/2}$ and a little more in the following transformed sum 
	$$ \sum_{q \sim Q} \sum_{n^{*} \sim \frac{Q^3 K^2 T}{T^{3}}} \, A(1,n) \sum_{m^{*} \sim \frac{Q^2 T^{2}}{T^{3}}} \, \lambda_{f}(m) \, \mathfrak{C} \, \mathfrak{I},$$
	where $\mathfrak{I}$ is an integral transform which oscillates like $n^{i K}$ with respect to $n$, and the character sum is given by 
	$$\mathfrak{C} = \sideset{}{^{*}}{\sum}_{a \, \rm mod \, q} S\left(\bar{a},n;q\right) \, e\left(\frac{\bar{a} m}{q}\right) \mapsto q e\left(-\frac{\bar{m} n}{q}\right).$$
	In the next step we apply the Cauchy inequality in the $n$-sum to get rid of the coefficients $A(1,n)$ and we arrive at the following expression 
	\begin{align}\label{after-cauchy}
		\sum_{n^{*} \sim \frac{Q^3 K^2 T}{T^{3}}} \Big \vert \sum_{q \sim Q} \sum_{m^{*} \sim \frac{Q^2 T^{2}}{T^{3}}} \lambda_{f}(m) \, e\left( - \frac{\bar{m} n}{q}\right) \, \mathfrak{I}\Big \vert^{2},
	\end{align}
	where we seek to save $T^{3}/K$ plus little more. Opening the absolute value square we apply the Poisson summation formula to the $n$-sum. In the zero frequency we save $Q^{3} T^{2}/T^{3}$ which is satisfactory if 
	$$\frac{Q^3 T^{2}}{T^{3}} > \frac{T^{3}}{K}$$
	or equivalently $K < T$. In the non-zero frequency we save 
	$$\frac{Q^3 K^2 T}{T^{3} \sqrt{K}}$$ 
	which is sufficient if 
	$$\frac{Q^3 K^2 T}{T^{3} \sqrt{K}} > \frac{T^{3}}{K}$$
	i.e., $K > T^{1/2}$. Thus we get the following  restriction on the choice  of $K$
	$$T^{1/2} < K < T.$$
	Therefore, we have a room to choose $K$ optimally and hence we get the result.

	\subsection{Notations}\label{notation}
	For any complex number $z$ we set $e(z):= e^{2 \pi i z}$. By $A \asymp B$ we mean that $ T^{-\epsilon} B \leq A \leq  T^{\epsilon}B$. By $A \approx B$ we mean $c_{1} A \leq B \leq c_{2} A$ for some positive real $c_{1},c_{2}$. By $A \sim B$ we mean that $B \leq A \leq 2B$. By the notation $X \ll Y$ we mean that for any $\epsilon >0$, there is a constant $c >0$ such that $|X| \leq c Y $. The implied constants may depend on the cusp form $ f$ and $\epsilon$. At various places $\epsilon>0$ may be different.

	\section{Preliminaries}
	\subsection{Back ground on the Maass forms for $GL(3)$} \label{gl3 maass form}
	Let $\phi$ be a Maass form of type $(\nu_{1}, \nu_{2})$ for $SL(3, \mathbb{Z})$. The  spectral parameters $({\bf t}_{1},{\bf t}_{2},{\bf t}_{3})$ of $\phi$ are given by
	$${\bf t}_{1} = - \nu_{1} - 2 \nu_{2}+1, \, {\bf t}_{2} = - \nu_{1}+ \nu_{2},  \, {\bf t}_{3} = 2 \nu_{1}+ \nu_{2}-1.$$
	Note that they satisfy the following equation:
	$${\bf t}_{1}+{\bf t}_{2}+{\bf t}_{2}=0.$$
	Let $T$ be a large parameter such that 
	$${\bf t}_{i} \approx T , \, \, \mathrm{for}\ \ i=1,2,3.$$
	Thus  $\nu_i$'s are of size at most $T$. In fact, one of them has size $T$ while the other one has size at most $T$. Without loss of generality, let's assume that $\nu_2 \approx T$. Hence $\nu_1 \ll T$. Using this we conclude that 
	$${\bf t}_{1} - {\bf t}_{2} \approx {\bf t}_{1} - {\bf t}_{3} \approx T,\ \ {\bf t}_{2} - {\bf t}_{3} \ll T.$$
	By the work of Jacquet, Piatetskii-Shapiro and Shalika, we have the Fourier Whittaker expansion for the Maass form $\phi(z)$ as follows
	\begin{equation} \label{four-whi-exp}
		\phi(z) = \sum_{\gamma \in U_{2}\left(\mathbb{Z}\right) \backslash  SL(2,\mathbb{Z})} \sum_{m_{1}=1}^{\infty} \sum_{m_{2} \neq 0} \frac{A(m_{1},m_{2})}{m_{1} |m_{2}|} \, W_{J}\left(M \left( {\begin{array}{cc}
				\gamma &  \\
				& 1 \\
		\end{array} } \right)
		z, \nu, \psi_{1,1}\right),
	\end{equation}
	where $U_{2}(\mathbb{Z})$ is the group of upper triangular matrices with integer entries and ones on the diagonal,
	$W_{J}\left(z,\nu,\psi_{1,1}\right)$ is the Jacquet-Whittaker function, and $M=\textbf{diag} \left(m_{1}|m_{2}|,m_{1},1\right)$ (cf. Goldfeld \cite{gold}).
	
	For $\psi(x) \in C_{c}^{\infty}(0, \infty)$, we denote $\tilde{\psi}(s) = \int_{0}^{\infty} \psi(x) x^{s-1} \mathrm{d}x$ to be its Mellin transform. 
	For $\ell =0$ and $1$, we set
	\begin{equation}
		\gamma_{\ell}(s) :=  \frac{\pi^{-3s-\frac{3}{2}}}{2} \, \prod_{i=1}^{3} \frac{\Gamma\left(\frac{1+s+{\bf t}_{i}+ \ell}{2}\right)}{\Gamma\left(\frac{-s-{\bf t}_{i}+ \ell}{2}\right)}. 
	\end{equation}
	
	Set $\gamma_{\pm}(s)= \gamma_{0}(s) \mp i\gamma_{1}(s)$ and let
	$$G_{\pm}(y) = \frac{1}{2 \pi i} \int_{(\sigma)} y^{-s} \, \gamma_{\pm}(s) \, \tilde{\psi}(-s) \, \mathrm{d}s $$
	with $\sigma > -1 + \max \{-\Re({\bf t}_{1}), -\Re({\bf t}_{2}), \Re({\bf t}_{3})\}$. 
	
	With the aid of the above terminology we state the $GL(3)$-Voronoi summation formula, due to Miller and Schmid \cite{miller-schmid}, in the following proposition.
	\begin{lemma} \label{gl3voronoi}
		Let $\psi (x)$ be a compactly supported smooth function on $(0,\infty)$. Let $A(m,n)$ be the $(m,n)$-th Fourier coefficient of a Maass form $\phi(z)$ for $SL(3,\mathbb{Z})$. Then we have
		\begin{align} \label{GL3-Voro}
			& \sum_{n=1}^{\infty} A(m,n) e\left(\frac{an}{q}\right) \psi(n) \\
			\nonumber & =q  \sum_{\pm} \sum_{n_{1}|qm} \sum_{n_{2}=1}^{\infty}  \frac{A(n_{2},n_{1})}{n_{1} n_{2}} S\left(m \bar{a}, \pm n_{2}; mq/n_{1}\right) \, G_{\pm} \left(\frac{n_{1}^2 n_{2}}{q^3 m}\right)
		\end{align} 
		where $(a,q)=1, \bar{a}$ is the multiplicative inverse modulo $q$ and $$S(a,b;q) = \sideset{}{^\star}{\sum}_{x \,\rm mod \, q} e\left(\frac{ax+b\bar{x}}{q}\right) $$
		is the Kloostermann sum.
	\end{lemma}

	The following lemma gives the Ramanujan bound for $A(m,n)$ on average, see \cite{gold}.
	\begin{lemma} \label{ramanubound}
		We have 
		$$\mathop{\sum \sum}_{n_{1}^{2} n_{2} \leq x} \vert A(n_{1},n_{2})\vert ^{2} \ll \, x^{1+\epsilon}.$$
	\end{lemma}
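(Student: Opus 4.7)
The plan is to deduce this mean-square bound from the analytic properties of the Rankin--Selberg convolution $L(s,\phi\times\tilde\phi)$ attached to $\phi$ and its dual. Recall that this $L$-function admits a Dirichlet series that is absolutely convergent in $\Re(s)>1$, continues meromorphically to $\mathbb{C}$, and has a simple pole at $s=1$ whose residue is a positive constant depending only on $\phi$. By the standard Rankin--Selberg unfolding (or by comparing Euler factors prime-by-prime, using that $A(m,n)=A(n,m)$ up to complex conjugation for a self-dual-type Satake parametrisation argument), one obtains the key identity
$$
\sum_{n_1,n_2\geq 1}\frac{|A(n_1,n_2)|^2}{(n_1^2 n_2)^s}=\frac{L(s,\phi\times\tilde\phi)}{\zeta(3s)},
$$
valid in $\Re(s)>1$. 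Since $\zeta(3s)$ is non-vanishing on $\Re(s)=1$, the right-hand side inherits the same pole structure: a simple pole at $s=1$ and holomorphy elsewhere on $\Re(s)\geq 1$.

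Given this, I would extract the partial sum $S(x):=\sum_{n_1^2 n_2\leq x}|A(n_1,n_2)|^2$ by a Perron-type formula, writing
$$
S(x)=\frac{1}{2\pi i}\int_{(1+\epsilon)}\frac{L(s,\phi\times\tilde\phi)}{\zeta(3s)}\,\frac{x^s}{s}\,ds+O(x^{\epsilon}),
$$
after smoothing with a suitable test function to control the usual truncation errors. Shifting the contour to $\Re(s)=1/2+\epsilon$, the residue at $s=1$ contributes the main term $cx$, and the remaining integral is bounded using convexity-type estimates for $L(s,\phi\times\tilde\phi)$ together with the standard zero-free region for $\zeta(3s)$ near $\Re(s)=1$. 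The total contribution is $O(x^{1+\epsilon})$, which is exactly the claimed bound (and in fact one can show $S(x)=cx+O(x^{1-\delta})$ unconditionally, but the weaker statement suffices for our application).

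The main technical obstacle is the construction and pole analysis of $L(s,\phi\times\tilde\phi)$ itself; this is precisely the content of the Jacquet--Piatetski-Shapiro--Shalika theory of Rankin--Selberg convolutions on $GL(3)\times GL(3)$, which guarantees the meromorphic continuation, functional equation, and the exact location and order of the pole. Once that is in hand, the deduction of the mean-square estimate is a standard Tauberian / Perron argument. Since this bound is well documented in Goldfeld \cite{gold}, one may alternatively just invoke it directly; the sketch above is the conceptual route behind that citation.
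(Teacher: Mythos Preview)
Your proposal is correct and follows the standard Rankin--Selberg route; the paper itself does not prove this lemma at all but simply cites Goldfeld \cite{gold}, and what you have sketched is precisely the argument carried out there (see in particular Theorem~7.4.9 and the surrounding discussion in \cite{gold}). One small remark: for the bare estimate $\ll x^{1+\epsilon}$ you do not need the contour shift to $\Re(s)=1/2+\epsilon$; once you know the Dirichlet series with non-negative coefficients has abscissa of convergence $1$ (from the pole location via Landau's lemma), the bound follows immediately by partial summation, and the finer asymptotic $S(x)=cx+O(x^{1-\delta})$ is not needed here.
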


	\subsection{Back ground on the holomorphic cusp forms on $GL(2)$}
	Let $f$ be a holomorphic Hecke-eigen form for the full modular group $SL(2,\mathbb{Z})$ of weight $k \geq 12$, even integer. Then we have the Fourier expansion 
	$$f(z) = \sum_{n=1}^{\infty} \, \lambda_{f}(n) \, n^{k-1/2} \, e(nz)$$
	for $z \in \mathbb{H}$. We have Deligne's bound on Fourier coefficients $\vert \lambda_{f}(n) \vert \leq d(n)$ for all $n\geq 1$, where $d(n)$ is the divisor function. 
	
	We have the following Voronoi summation formula, see \cite[Chapter 4]{iwaniec} for example.

	\begin{lemma} \label{gl2voronoi}
		Let $g$ be a smooth, compactly supported function on $\mathbb{R}^{+}$. Then we have
		$$\sum_{n=1}^{\infty} \lambda_{f}(n) \, e\left(\frac{an}{c}\right)\, g(n) = \frac{1}{c} \sum_{n=1}^{\infty} \lambda_{f}(n) \, e\left(-\frac{d n}{c}\right)\, h(n),$$
		where $ad \equiv 1 (\rm mod \, c)$ and 
		$$h(y) = 2 \pi i^{k} \int_{0}^{\infty} g(x) \, J_{k-1} \left(\frac{4 \pi \sqrt{xy}}{c}\right) \, dx.$$
	\end{lemma}

	\subsection{The Rankin-Selberg $L$-function on $GL(3) \times GL(2)$}
	The Rankin-Selberg $L$-function on $GL(3) \times GL(2)$ is defined as
	$$L(\phi \times f, s) = \, \mathop{\sum \sum}_{n,r \geq 1} \frac{A(r,n) \, \lambda_{f}(n)}{(n r^2)^{s}}, \, \, \Re(s) >1.$$
	The completed $L$-function 
	\begin{align} \label{complete f e}
		\Lambda(\phi \times f,s):= \gamma(\phi \times f,s) \, L(\phi \times f, s).
	\end{align}
	The factor $\gamma(\phi \times f,s)$ at infinity is given as
	$$\gamma(\phi \times f,s) = \prod_{\substack{1\leq i \leq 3 \\ 1\leq j \leq 2}} \, \Gamma_{\mathbb{R}}\left(s-\mu_{f,j}-{\bf t}_{i}\right),$$ 
	where $$\Gamma_{\mathbb{R}}(s) = \pi^{-s/2} \Gamma(s/2), \mu_{f,1}= -(k-1)/2, \mu_{f,2} = -k/2.$$
	The completed $L$-function $\Lambda(\phi \times f,s)$ satisfies the functional equation, see \cite{gold},
	\begin{equation} \label{functioeq}
		\Lambda(\phi \times f,s) = \varepsilon(\phi \times f ) \, \Lambda(\bar{\phi} \times f,1-s),
	\end{equation}
	where the root number $\varepsilon(\phi \times f ) =i^{3k}$ and the form $\bar{\phi}$ is dual to $\phi$ and the $(m,n)$th coefficient for $\bar{\phi}$ is given by $A(n,m)$.
	\subsection{DFI $\delta$-method} \label{circlemethod}
	Let $\delta: \mathbb{Z} \to \{0,1\}$ be defined by
	\[
	\delta(n)= \begin{cases}
		1 \quad \text{if} \,\  n=0; \\
		0 \quad  \,  $\textrm{otherwise}$.
	\end{cases}
	\]
	The above delta symbol can be used to separate the oscillations involved in a sum. Further, we seek a Fourier expansion of $\delta(n)$. We mention here an expansion for $\delta(n)$ which is due to Duke, Friedlander and Iwaniec. Let $L\geq 1$ be a large number. For $n \in [-2L,2L]$, we have
	
	\begin{align*} 
		\delta(n)= \frac{1}{Q} \sum_{1 \leq q \leq Q} \frac{1}{q} \, \sideset{}{^\star}{\sum}_{a \, \rm mod \, q} \, e \left(\frac{na}{q}\right) \int_{\mathbb{R}} g(q,x) \,  e\left(\frac{nx}{qQ}\right) \, \mathrm{d}x,
	\end{align*}
	where  $Q=2L^{1/2}$. The $\star$ on the sum indicates that the sum over $a$ is restricted by the condition $(a,q)=1$. The function $g$ is the only part in the above formula which is not explicitly given. Nevertheless, we only need the following  properties of $g$ in our analysis.  
	\begin{align} \label{g properties}
		&g(q,x)=1+h(q,x), \quad \text{with} \ \ \  h(q,x)=O \left(\frac{Q}{q} \left(\frac{q}{Q}+|x|\right)^{B}\right),  \\
		& x^j \frac{\partial ^j}{\partial x^j}g(q,x) \ll \log Q \min \left\lbrace \frac{Q}{q}, \frac{1}{|x|}\right\rbrace, \notag  \\
		& g(q,x) \ll |x|^{-B} \notag.  
	\end{align}
	for any $B>1$ and $j \geq 1$. Using the third property of $g(q,x)$ we observe that the effective range of the integration in \eqref{deltasymbol}  is $[-L^{\epsilon},L^{\epsilon}]$. We record the above observations in the following lemma.
	\begin{lemma}\label{deltasymbol}
		Let $\delta$ be as above and $g$ be a function satisfying \eqref{g properties}. Let $L\geq 1$ be a large parameter. Then, for $n \in [-2L,2L]$, we have
		\begin{equation*} 
			\delta(n)= \frac{1}{Q} \sum_{1 \leq q \leq Q} \frac{1}{q} \, \sideset{}{^\star}{\sum}_{a \, \rm mod \, q} \, e \left(\frac{na}{q}\right) \int_{\mathbb{R}}V(x/Q^\epsilon) g(q,x) \,  e\left(\frac{nx}{qQ}\right) \, \mathrm{d}x+O(L^{-2020}),
		\end{equation*}
		where $Q=2L^{1/2}$ and  $V(x)$ is a smooth bump function supported in $[-2,2]$, with $V(x)=1$ for $x \in [-1,1] $ and $W^{(j)}\ll_j 1$.
	\end{lemma}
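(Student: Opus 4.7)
The plan is to start from the unrestricted DFI expansion of $\delta(n)$ (without the bump function $W$), which is the statement of Duke--Friedlander--Iwaniec as quoted in the paragraph preceding the lemma, and show that cutting the $x$-integration down to the compact window $[-2L^{\epsilon},2L^{\epsilon}]$ via multiplication by $W(x)$ introduces only an admissible error of size $O(L^{-2020})$. All three displayed properties of $g(q,x)$ in \eqref{g properties} enter; the decisive one for this truncation is the third, $g(q,x)\ll |x|^{-B}$ for any $B>1$.

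First I would write
\begin{equation*}
\int_{\mathbb{R}} g(q,x)\,e\!\left(\tfrac{nx}{qQ}\right)\,\mathrm{d}x
= \int_{\mathbb{R}} W(x)\,g(q,x)\,e\!\left(\tfrac{nx}{qQ}\right)\,\mathrm{d}x
+ \int_{\mathbb{R}} (1-W(x))\,g(q,x)\,e\!\left(\tfrac{nx}{qQ}\right)\,\mathrm{d}x,
\end{equation*}
and note that the second integrand is supported on $|x|\geq L^{\epsilon}$, because $W(x)=1$ on $[-L^{\epsilon},L^{\epsilon}]$. On this region the rapid-decay estimate gives
\begin{equation*}
\left|\int_{\mathbb{R}} (1-W(x))\,g(q,x)\,e\!\left(\tfrac{nx}{qQ}\right)\mathrm{d}x\right|
\;\ll\; \int_{|x|\geq L^{\epsilon}} |x|^{-B}\,\mathrm{d}x
\;\ll\; L^{-\epsilon(B-1)},
\end{equation*}
uniformly in $q$ and $n$, for any $B>1$ of our choice.

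Multiplying by the prefactor $\tfrac{1}{Q}\sum_{q\leq Q}\tfrac{1}{q}\sum^{\star}_{a\bmod q}e(na/q)$ and estimating this outer part trivially by $\tfrac{1}{Q}\sum_{q\leq Q}\tfrac{1}{q}\cdot q=\tfrac{Q}{Q}\cdot(\log Q+O(1))\ll \log L$, the tail contribution to $\delta(n)$ is bounded by $L^{-\epsilon(B-1)}\log L$. Choosing $B$ large (for instance $B=10^{6}/\epsilon$) makes this $O(L^{-2020})$, as required. Substituting this back and recognising the main term as the integral of $W(x)g(q,x)e(nx/(qQ))$ yields exactly the expansion claimed in the lemma.

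The argument is essentially a mechanical truncation and presents no real obstacle; the only thing to keep straight is that the first two properties of $g$ in \eqref{g properties} are what make the inner integral meaningful and smoothly oscillatory on the truncated window (for later use in the main argument), while only the third property — the rapid decay at infinity — is actually needed to justify the truncation here. I would not expect to need integration by parts in $x$, since the pointwise decay of $g$ already dominates the tail.
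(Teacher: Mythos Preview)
Your proposal is correct and is precisely the intended justification; the paper itself gives no argument here and simply refers the reader to Chapter 20 of Iwaniec--Kowalski, so you have supplied more than the paper does. One minor slip: your trivial bound for the outer sum should read $\tfrac{1}{Q}\sum_{q\leq Q}\tfrac{1}{q}\cdot q=\tfrac{1}{Q}\cdot Q=1$, not $\log L$, but this only strengthens your conclusion.
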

	\begin{proof}
		See  [Chapter 20, \cite{iwaniec}].
	\end{proof}
	
	\subsection{Gamma function} \label{gamma}
	We will  need  the following  Stirling asymptotic formula.

	For fixed $\sigma \in \mathbb{R}$,  $\vert \tau\vert \geq 10$ and any $M>0$ we have, 
	\begin{align} \label{stirling}
		\Gamma(\sigma + i \tau) = e^{-\frac{\pi}{2}|\tau|} \, \vert \tau \vert^{\sigma-\frac{1}{2}}\exp\left(i \tau \, \log \frac{\vert \tau \vert}{e}\right)\, g_{\sigma, M}(\tau) + O_{\sigma,M}(\tau^{-M})
	\end{align}
	where $t^{j} g_{\sigma,M}^{(j)}(t) \ll_{j,\sigma,M } 1$.
	
	
	
	%
	\subsection{Oscillatory integrals}
	
	We need bounds on the exponential integrals. In this section we collect some results which give bounds and asymptotic formulas of the exponential integrals. 
	
	\subsubsection{One dimensional exponential integrals}
	The results of this subsection are taken from \cite[Section 8]{Blomer}. 
	
	We are interested in the integrals of the form 
	$$I = \int w(t) \, e^{ih(t)} \mathrm{d}t$$
	where $w$ is a smooth function with support in $[a,b]$ and $h$ is a smooth real valued function on $[a,b]$. 
	
	The following lemma will be used to show that exponentials integrals $I$ are negligibly small in the absence of the stationary phase. 
	\begin{lemma} \label{nostationary}
		Let $Y \geq 1, X, Q, U, R >0$. And let us further assume that
		\begin{itemize}
			\item $w^{(j)}(t) \ll_{j} \frac{X}{U^{j}}$ for $j=0,1,2, \ldots$
			\item $\vert h^{\prime}(t) \vert \geq R$ and $h^{(j)}(t) \ll_{j} \frac{Y}{Q^{j}}$ for $j=2,3,\ldots$
		\end{itemize}
		Then we have
		$$I \ll_{A} (b-a)X \left(\left(\frac{QR}{\sqrt{Y}}\right)^{-A}+\left(RU \right)^{-A} \right).$$
	\end{lemma}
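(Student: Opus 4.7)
The approach is iterated integration by parts---the standard non-stationary phase argument. Since $w$ has compact support in $[a,b]$ and $|h'(t)| \geq R > 0$ throughout, writing $e^{ih(t)} = \frac{1}{ih'(t)}\frac{d}{dt}e^{ih(t)}$ and integrating by parts (with vanishing boundary terms) produces $I = \int_a^b L[w](t)\, e^{ih(t)}\, dt$, where $L[f] := -\frac{d}{dt}(f/(ih'))$. Iterating $A$ times gives $I = \int_a^b L^A[w](t)\, e^{ih(t)}\, dt$, hence $|I| \leq (b-a)\sup_t |L^A[w](t)|$.

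The heart of the argument is the pointwise bound $|L^A[w](t)| \ll_A X\bigl((RU)^{-A} + (QR/\sqrt{Y})^{-A}\bigr)$. By induction on $A$ one shows that $L^A[w]$ is a linear combination of at most $C_A$ monomials of the form
$$c_{\alpha}\cdot\frac{w^{(\alpha_0)}\prod_{k \geq 2}(h^{(k)})^{\alpha_k}}{(h')^{\beta}}, \qquad \beta = A + \sum_{k \geq 2}\alpha_k, \quad \alpha_0 + \sum_{k \geq 2}\alpha_k \leq A.$$
The identity $\beta - \sum_k \alpha_k = A$ (the \emph{effective order} of each monomial) is invariant under $L$, as is verified by tracing the three ways $L$ acts on a monomial: the derivative lands on $w$, on some $h^{(k)}$, or on $(h')^{-\beta}$ in the denominator (in each case, the effective order rises by exactly one). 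Plugging in the hypothesized bounds $|w^{(j)}| \ll X/U^j$, $|h^{(k)}| \ll Y/Q^k$ (for $k \geq 2$), $|h'| \geq R$, together with the trivial inequality $\sum_k k\alpha_k \geq 2\sum_k \alpha_k$, each such monomial is bounded by $X R^{-A}\, U^{-\alpha_0}\bigl(Y/(Q^2 R)\bigr)^{\sum_k \alpha_k}$.

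Maximizing over the admissible multi-indices $(\alpha_0, \sum_k \alpha_k)$ subject to $\alpha_0 + \sum_k \alpha_k \leq A$ isolates two extremes: the pure $w$-derivative branch $(\alpha_0 = A)$ contributes $X(RU)^{-A}$, and the pure $h$-derivative branch contributes $X(QR/\sqrt{Y})^{-2A}$. In the non-trivial regime $QR \geq \sqrt{Y}$, the latter is dominated by $X(QR/\sqrt{Y})^{-A}$; in the complementary regime $(QR/\sqrt{Y})^{-A} \geq 1$ and the asserted bound already exceeds the trivial estimate $|I| \leq (b-a)X$, so the lemma is automatic. The mixed intermediate multi-indices interpolate between the two extremes and are absorbed in the same way, yielding the claim.

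The main obstacle is the combinatorial structural description of $L^A[w]$---setting up the invariants $\beta - \sum_k\alpha_k = A$ and $\alpha_0 + \sum_k\alpha_k \leq A$ and verifying them inductively through the three-branch case analysis. This is routine bookkeeping, but it must be carried out carefully to control the constant $C_A$ and to check that the dominant monomial is captured by one of the two extreme branches above. Once this structural picture is in place, the magnitude estimate and the final conclusion follow cleanly.
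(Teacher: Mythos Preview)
The paper does not prove this lemma; it is quoted from \cite[Section~8]{Blomer} (Blomer--Khan--Young). Your proposal reproduces the standard iterated integration-by-parts argument that appears there, so the approach is correct and matches the cited source.

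One small technical point: when you invoke $\sum_k k\alpha_k \geq 2\sum_k\alpha_k$ to pass from $Q^{-\sum_k k\alpha_k}$ to $Q^{-2\sum_k\alpha_k}$, you are implicitly assuming $Q\geq 1$, which is not part of the hypotheses. The cleanest fix is to track the \emph{exact} invariant $\alpha_0+\sum_{k\geq 2}(k-1)\alpha_k=A$ (which you can verify increases by exactly one under each of the three branches of $L$), rather than only the inequality $\alpha_0+\sum_k\alpha_k\leq A$. With this equality in hand, each monomial is bounded by $X(QR)^{-A}(Q/U)^{\alpha_0}(Y/QR)^{s}$ with $0\leq s\leq A-\alpha_0$; maximising over $(\alpha_0,s)$ gives at most $X\max\bigl((QR)^{-A},(RU)^{-A},(QR/\sqrt{Y})^{-2A}\bigr)$, and since $Y\geq 1$ the first term is dominated by the third. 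In the non-trivial regime $QR\geq\sqrt{Y}$ the third term is at most $(QR/\sqrt{Y})^{-A}$, and in the complementary regime the asserted bound is already weaker than the trivial estimate $|I|\leq (b-a)X$. This closes the gap without further assumptions.
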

	
	The following lemma gives an asymptotic expression for $I$ when the stationary phase exist.
	\begin{lemma} \label{stationaryphase}
		Let $0 < \delta < 1/10, X, Y, U, Q>0, Z:= Q+X+Y+b-a+1$, and assume that
		$$ Y \geq Z^{3 \delta}, \, b-a \geq U \geq \frac{Q Z^{\frac{\delta}{2}}}{\sqrt{Y}}.$$
		Assume that $w$  satisfies  
		$$w^{(j)}(t) \ll_{j} \frac{X}{U^{j}} \, \, \, \text{for} \,\,  j=0,1,2,\ldots.$$  
		Suppose that there exists unique $t_{0} \in [a,b]$ such that $h^{\prime}(t_{0})=0$, and the function $h$ satisfies
		$$h^{\prime \prime}(t) \gg \frac{Y}{Q^2}, \, \, h^{(j)}(t) \ll_{j} \frac{Y}{Q^{j}} \, \, \, \, \text{for} \, \, j=1,2,3,\ldots.$$
		Then we have
		$$I = \frac{e^{i h(t_{0})}}{\sqrt{h^{\prime \prime}(t_{0})}} \, \sum_{n=0}^{3 \delta^{-1}A} p_{n}(t_{0}) + O_{A,\delta}\left( Z^{-A}\right), \, p_{n}(t_{0}) = \frac{\sqrt{2 \pi} e^{\pi i/4}}{n!} \left(\frac{i}{2 h^{\prime \prime}(t_{0})}\right)^{n} G^{(2n)}(t_{0})$$
		where 
		$$ G(t)=w(t) e^{i H(t)}, \text{and} \, H(t)= h(t)-h(t_{0})-\frac{1}{2} h^{\prime \prime}(t_{0})(t-t_{0})^2.$$
		Furthermore, each  $p_{n}$ is a rational function in $h^{\prime}, h^{\prime \prime}, \ldots,$ satisfying the derivative bound
		$$\frac{d^{j}}{dt_{0}^{j}} p_{n}(t_{0}) \ll_{j,n} X \left(\frac{1}{U^{j}}+ \frac{1}{Q^{j}}\right) \left( \left(\frac{U^2 Y}{Q^2}\right)^{-n} + Y^{-\frac{n}{3}}\right).$$
	\end{lemma}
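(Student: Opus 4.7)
The plan is to prove this standard method-of-stationary-phase expansion by splitting the integral into a ``near'' piece supported in a window around $t_0$ of an intermediate scale and a ``far'' piece, handling the far piece via Lemma \ref{nostationary} and expanding the near piece via Taylor series combined with explicit Fresnel moment identities.

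First I would fix a smooth cutoff $\chi(t)$ equal to $1$ on $|t-t_0|\leq U_0$ and supported in $|t-t_0|\leq 2U_0$, with $\chi^{(j)}\ll U_0^{-j}$, where $U_0$ is chosen so that $QZ^{\delta/2}/\sqrt{Y}\leq U_0\leq U$ (possible by the hypothesis $U\geq QZ^{\delta/2}/\sqrt{Y}$). On the support of $1-\chi$, the assumptions $h'(t_0)=0$ and $h''(t)\gg Y/Q^2$ yield $|h'(t)|\gg YU_0/Q^2\gg Z^{\delta/2}\sqrt{Y}/Q$. Applying Lemma \ref{nostationary} to $\int w(t)(1-\chi(t))e^{ih(t)}\,\mathrm dt$ with this value of $R$ gives $\ll_A Z^{-A}$ for any $A$, since $Y\geq Z^{3\delta}$ and $RU\geq Z^{\delta/2}\cdot\sqrt{Y}U/Q\geq Z^{\delta}$.

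For the near piece, set $H(t)=h(t)-h(t_0)-\tfrac{1}{2}h''(t_0)(t-t_0)^2$, so that $H(t_0)=H'(t_0)=H''(t_0)=0$ and $H^{(j)}(t)\ll Y/Q^j$ for $j\geq 3$. Factoring out the Gaussian phase gives
$$I_{\mathrm{near}}=e^{ih(t_0)}\int \chi(t)\,G(t)\,e^{\frac{i}{2}h''(t_0)(t-t_0)^2}\,\mathrm dt,\qquad G(t)=w(t)e^{iH(t)}.$$
I would then Taylor-expand $\chi(t)G(t)$ around $t=t_0$ to order $N=6\delta^{-1}A$ and integrate each term against the Gaussian. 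Odd moments vanish; the even ones evaluate, by differentiating $\int e^{\frac{i}{2}h''(t_0)s^2+\lambda s}\,\mathrm ds=\sqrt{2\pi/h''(t_0)}\,e^{i\pi/4}\,e^{i\lambda^2/(2h''(t_0))}$ in $\lambda$ at $\lambda=0$, to
$$\int_{\mathbb R}(t-t_0)^{2n}\,e^{\frac{i}{2}h''(t_0)(t-t_0)^2}\,\mathrm dt=\sqrt{\frac{2\pi}{h''(t_0)}}\,e^{i\pi/4}\,\frac{(2n)!}{n!}\left(\frac{i}{2h''(t_0)}\right)^n,$$
with the obvious complex-conjugate modification if $h''(t_0)<0$; the tail outside the support of $\chi$ is absorbed by decay of the Fresnel kernel against polynomial growth, giving a negligible contribution. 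Collecting the factor $(2n)!/((2n)!\,n!)=1/n!$ in the Taylor coefficient reproduces exactly $p_n(t_0)$ as stated.

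The main technical obstacle will be quantifying the Taylor remainder and the derivative bound on $p_n$. Each successive term contributes a factor of size at most $\max\{U^{-1},Y^{1/3}/Q\}\cdot Q/\sqrt{Y}$ after Gaussian integration; by the hypotheses $U\geq QZ^{\delta/2}/\sqrt{Y}$ and $Y\geq Z^{3\delta}$ this is $\leq Z^{-\delta/2}$, so $N=6\delta^{-1}A$ terms drive the remainder below $Z^{-A}$. The derivative bound on $p_n$ then follows from direct differentiation: the factor $(h''(t_0))^{-n}$ loses $O(Q^{-j})$ per derivative in $t_0$ (using $h^{(k)}\ll Y/Q^k$), while Leibniz applied to $G^{(2n)}=(we^{iH})^{(2n)}$ together with the Fa\`a di Bruno bound $(e^{iH})^{(k)}(t_0)\ll Y^{k/3}/Q^k$ (valid since $H,H',H''$ vanish at $t_0$, so only partitions of $k$ into parts $\geq 3$ contribute) produces the claimed product bound.
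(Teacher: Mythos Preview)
The paper does not actually prove this lemma; it is quoted verbatim from \cite[Section 8]{Blomer} (Blomer--Khan--Young), as stated at the start of that subsection. Your sketch is precisely the standard argument used there: a smooth near/far decomposition at scale $U_0\asymp QZ^{\delta/2}/\sqrt{Y}$, the far piece killed by repeated integration by parts (Lemma \ref{nostationary}), and the near piece expanded via the Fresnel moment identities, with the Fa\`a di Bruno bookkeeping for $G^{(2n)}(t_0)$ giving the claimed bounds on $p_n$ and its derivatives.

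Two small points worth tightening. First, in the far piece the weight is $w(1-\chi)$, whose derivatives are controlled by $U_0$ rather than $U$; so the relevant product in Lemma \ref{nostationary} is $RU_0\gg Z^{\delta}$, not $RU$ --- this still suffices. Second, the Taylor remainder step deserves a line of justification: the bound ``each successive term gains $Z^{-\delta/2}$'' applies to the individual terms $p_n$, but the remainder after truncation is not literally the next term. One handles this by using the integral form of the remainder and bounding $G^{(N+1)}(t)$ uniformly on $|t-t_0|\le 2U_0$ (where $H,H',H''$ are no longer zero but are $O((Y/Q^3)U_0^3)$, $O((Y/Q^3)U_0^2)$, $O((Y/Q^3)U_0)$ respectively, which is enough), or alternatively by expanding a few orders further and bounding the tail sum directly. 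With these cosmetic fixes your argument goes through.
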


	\subsubsection{Two dimensional exponential integrals}
	The result of this section is taken from \cite[Section~2.2.1, Lemma~2.6 ]{kratzel}. We record in this subsection the second derivative bound for exponential integrals in two variables. Let $f,g:[a,b] \times [c,d] \to \mathbb{R}$ be smooth functions. Then we are interested in the integral of the form
	$$\int_{a}^{b} \int_{c}^{d} g(x,y) \, e\left(f(x,y)\right) \mathrm{d}x \, \mathrm{d}y.$$


	Let $D$ be a plane domain which is contained in the rectangle $[a,b]\times [c,d]$ with $b-a \geq 1$ and $d-c \geq 1$. Let $f:[a,b]\times [c,d] \mapsto \mathbb{R}$ be any function with continuous partial derivatives of as many orders as may required. Then we want to estimate the exponential integral 
	$$\mathop{\int \int}_{D} \, e\left(f(x,y)\right) \, dx \, dy.$$
	We give bounds for the above integral in the following lemma. It requires some conditions on the domain $D$ and the function $f$. In fact, we need following conditions:
	\begin{enumerate}
		\item Any straight line parallel to any of the coordinate axes intersects $D$ in a bounded number of line segments.
		\item  Suppose that $\frac{\partial}{\partial x} f(x,y), \frac{\partial}{\partial y} f(x,y)$ are monotonic in $x$ and $y$, respectively. 
		\item Intersections of $D$ with domains of the type $$\frac{\partial}{\partial x} f(x,y), \frac{\partial}{\partial y} f(x,y) \geq c$$ or $$\frac{\partial}{\partial x} f(x,y), \frac{\partial}{\partial y} f(x,y) \leq c$$ are to satisfy condition (1).
		\item The boundary of $D$ can be divided into a bounded number of parts. In each part the curve of the boundary is given by $y =\text{constant}$ or $x= \rho(y)$, which is continuous. 
	\end{enumerate} 
	Under these assumptions,  we have the following lemma.
	
	\begin{lemma}
		Suppose that 
		$$\Lambda_{1}^2 \ll \frac{\partial^2}{\partial x^2} f(x,y) \ll \Lambda_{1}^2, \, \,  \Lambda_{2}^2 \ll \frac{\partial^2}{\partial y^2} f(x,y) \ll \Lambda_{2}^2,$$
		$$ \frac{\partial^2}{\partial x \partial y} f(x,y)  \ll \Lambda_{1} \Lambda_{2}, \, \, \frac{\partial^2}{\partial x^2} f(x,y)\frac{\partial^2}{\partial y^2} f(x,y) - \left(\frac{\partial^2}{\partial x \partial y} f(x,y) \right)^2 \gg \Lambda_{1}^2 \, \Lambda_{2}^2$$
		through out the rectangle $[a,b] \times [c,d]$. For all parts of the curve of the boundary let $y = \text{constant}$ and $x = \rho(y)$, where $\rho(y)$ is twice differentiable and $\rho^{\prime \prime}(t) \ll r$. Then 
		$$\int_{a}^{b} \int_{c}^{d} g(x,y) \, e\left(f(x,y)\right) \mathrm{d}x \, \mathrm{d}y \ll \frac{1 + \log (b-a)(d-c) + \log \Lambda_{1} + \log \Lambda_{2}}{\Lambda_{1} \Lambda_{2}} + \frac{r}{\Lambda_{2}}.$$ 
	\end{lemma}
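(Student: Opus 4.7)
The plan is to reduce the estimate to iterated one-dimensional second-derivative tests, applied successively in $x$ and then in $y$. First I fix $y$ and handle the inner $x$-integral $\int_a^b g(x,y)\, e(f(x,y))\, dx$. By the hypothesis $\partial_{xx}^2 f \asymp \Lambda_1^2$ of constant sign, the map $x \mapsto \partial_x f(x,y)$ is strictly monotonic on $[a,b]$, and so the 1D stationary phase of Lemma \ref{stationaryphase} (equivalently the classical second-derivative test) produces a leading term of size $g(x_0(y), y)/\sqrt{\partial_{xx}^2 f(x_0(y), y)} \asymp 1/\Lambda_1$ concentrated at the stationary point $x_0(y)$ defined by $\partial_x f(x_0(y), y) = 0$, together with boundary and error terms controlled by integration by parts in $x$. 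The structural conditions (1)--(4) on $D$ ensure that this 1D reduction can be carried out on each of a bounded number of subintervals of $[a,b]$.

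Next I analyze the reduced $y$-integral whose phase is $\phi(y) := f(x_0(y), y)$. Implicit differentiation of $\partial_x f(x_0(y), y) = 0$ gives $x_0'(y) = -\partial_{xy}^2 f / \partial_{xx}^2 f$, and then a direct computation yields
\[
\phi''(y) = \partial_{yy}^2 f - \frac{(\partial_{xy}^2 f)^2}{\partial_{xx}^2 f} = \frac{\partial_{xx}^2 f \cdot \partial_{yy}^2 f - (\partial_{xy}^2 f)^2}{\partial_{xx}^2 f}.
\]
The determinant hypothesis $\partial_{xx}^2 f \cdot \partial_{yy}^2 f - (\partial_{xy}^2 f)^2 \gg \Lambda_1^2 \Lambda_2^2$ combined with $\partial_{xx}^2 f \ll \Lambda_1^2$ gives $\phi''(y) \gg \Lambda_2^2$. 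A second application of the 1D second-derivative test in $y$ then contributes another factor $1/\Lambda_2$, producing the main term $1/(\Lambda_1 \Lambda_2)$. The logarithmic factors $\log((b-a)(d-c)) + \log \Lambda_1 + \log \Lambda_2$ arise as the price of (i) partitioning $D$ into a bounded number of monotonicity pieces and (ii) absorbing the mild amplitude loss in the stationary-phase asymptotic as $x_0(y)$ approaches the endpoints of the $x$-range.

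The extra term $r/\Lambda_2$ arises from the boundary contributions along curves $x = \rho(y)$. The integration by parts in $x$ leaves a boundary piece that is a 1D oscillatory integral in $y$ with phase $f(\rho(y), y)$, whose second derivative equals
\[
\partial_{xx}^2 f \cdot (\rho'(y))^2 + 2\, \partial_{xy}^2 f \cdot \rho'(y) + \partial_{yy}^2 f + \partial_x f \cdot \rho''(y).
\]
The first three terms combine, by the same determinant argument, to a quantity that is $\asymp \Lambda_2^2$, while $\partial_x f \cdot \rho''(y) \ll \Lambda_1 r$ is a controlled perturbation; the 1D second-derivative test then yields the additional contribution $r/\Lambda_2$. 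The main obstacle and the source of most of the bookkeeping is the careful treatment of these endpoint and boundary contributions: one has to repeatedly invoke the monotonicity condition (2) on $\partial_x f, \partial_y f$ and the finite-multiplicity conditions (1) and (3) on intersections of $D$ with coordinate lines and with level sets of the form $\{\partial_x f \geq c\}$, $\{\partial_y f \geq c\}$, so as to reduce to a bounded number of subdomains on each of which the two-step stationary-phase analysis above applies cleanly.
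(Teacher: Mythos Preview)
The paper does not supply its own proof of this lemma: it simply cites the result as \cite[Section~2.2.1, Lemma~2.6]{kratzel} and states it without argument. So there is no in-paper proof to compare against.

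Your sketch is the standard route to such two-dimensional second-derivative bounds and is essentially what Kr\"atzel does: one first applies the one-dimensional second-derivative test in $x$ (stationary point $x_0(y)$, saving $1/\Lambda_1$), then observes via implicit differentiation that the reduced phase $\phi(y)=f(x_0(y),y)$ satisfies
\[
\phi''(y)=\frac{f_{xx}f_{yy}-f_{xy}^2}{f_{xx}}\gg \Lambda_2^2,
\]
so a second application in $y$ saves $1/\Lambda_2$. The structural conditions (1)--(4) are exactly what is needed to make the endpoint bookkeeping go through on a bounded number of pieces, and the logarithms come from the dyadic decomposition near the zeros of $f_x$ and $\phi'$.

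One small caution on the boundary term: your heuristic for $r/\Lambda_2$ is a bit loose. In Kr\"atzel's treatment the extra $r/\Lambda_2$ arises not from ``$\partial_x f\cdot\rho''$ being a small perturbation'' but rather because along a curved boundary piece $x=\rho(y)$ the boundary integral after integration by parts in $x$ has amplitude $\asymp 1/|f_x|$ and phase $f(\rho(y),y)$, and one must combine the first-derivative test (where $|f_x|$ is large) with the second-derivative test (where it is small); the curvature bound $\rho''\ll r$ controls how fast the boundary phase can degenerate, and this is what produces the $r/\Lambda_2$ contribution. Your outline is correct in spirit, but if you want a self-contained proof you should make this step precise rather than calling it a perturbation.
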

	
	\begin{corollary} \label{twodimexpointe}
		Let $g:[a,b]\times [c,d] \mapsto \mathbb{R}$ be a compactly supported smooth function and the  support of $g$ lies in $[a,b] \times [c,d]$ . Then we have
		\begin{align*}
			&\int_{a}^{b} \int_{c}^{d} g(x,y) \, e\left(f(x,y)\right) \mathrm{d}x \, \mathrm{d}y  \\
			&\ll \left(\frac{1 + \log (b-a)(d-c) + |\log \Lambda_{1}| + |\log \Lambda_{2}|}{\Lambda_{1} \Lambda_{2}} + \frac{r}{\Lambda_{2}} \right) \text{Var}(g),
		\end{align*}
		where $\text{var}(g)$ is the total variation of $g$ which  is defined as 
		$$\text{var}(g):= \int_{a}^{b} \int_{c}^{d} \left| \frac{\partial^2}{\partial x \partial y} g(x,y)\right|  \mathrm{d}x \mathrm{d}y.$$ 
	\end{corollary}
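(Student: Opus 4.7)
The plan is to reduce the weighted two-dimensional exponential integral to the unweighted case handled by the preceding Lemma, via an integration-by-parts representation of $g$ in terms of its mixed partial $\partial^2 g/\partial x \partial y$.

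First, since $g$ has compact support inside $[a,b]\times[c,d]$, we have the vanishing boundary conditions $g(a,y)=g(x,c)=0$, and the fundamental theorem of calculus gives
\begin{equation*}
g(x,y) \;=\; \int_{a}^{x}\int_{c}^{y} \frac{\partial^{2}g}{\partial s \partial t}(s,t)\, dt\, ds.
\end{equation*}
Substituting into the integral of interest and swapping the order of integration by Fubini, I obtain
\begin{equation*}
\int_{a}^{b}\!\!\int_{c}^{d} g(x,y)\, e(f(x,y))\, dx\, dy \;=\; \int_{a}^{b}\!\!\int_{c}^{d} \frac{\partial^{2}g}{\partial s \partial t}(s,t)\, I(s,t)\, dt\, ds,
\end{equation*}
where
\begin{equation*}
I(s,t) \;:=\; \int_{s}^{b}\!\!\int_{t}^{d} e(f(x,y))\, dx\, dy.
\end{equation*}

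Next I apply the preceding Lemma to the unweighted integral $I(s,t)$. The sub-rectangle $[s,b]\times[t,d]$ trivially satisfies the geometric conditions (1)--(4) that were imposed on the domain $D$: coordinate lines meet it in a single segment, and its boundary consists of four axis-parallel or $x=\text{const}$ pieces with $\rho''\equiv 0$. The hypotheses on the Hessian of $f$ and on the boundary parameter $r$ continue to hold on this sub-rectangle because they were assumed on the full rectangle. Hence the Lemma yields
\begin{equation*}
|I(s,t)| \;\ll\; \frac{1+\log(b-a)(d-c)+\log\Lambda_{1}+\log\Lambda_{2}}{\Lambda_{1}\Lambda_{2}} \;+\; \frac{r}{\Lambda_{2}},
\end{equation*}
with an implied constant independent of $(s,t)$.

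Finally, inserting this uniform bound into the Fubini-rewritten expression and taking absolute values gives
\begin{equation*}
\left|\int_{a}^{b}\!\!\int_{c}^{d} g(x,y) e(f(x,y))\, dx\, dy\right| \;\ll\; \left(\frac{1+\log(b-a)(d-c)+\log\Lambda_{1}+\log\Lambda_{2}}{\Lambda_{1}\Lambda_{2}}+\frac{r}{\Lambda_{2}}\right)\!\!\int_{a}^{b}\!\!\int_{c}^{d}\!\!\left|\frac{\partial^{2}g}{\partial s\partial t}(s,t)\right| dt\, ds,
\end{equation*}
which is precisely the claimed inequality in terms of $\mathrm{Var}(g)$. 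The only point requiring a little care, which I would write out carefully, is verifying that the truncated rectangular domain $[s,b]\times[t,d]$ really falls within the scope of the previous Lemma uniformly in $(s,t)$; this is the step most likely to need a short justification, but it is a routine geometric check.
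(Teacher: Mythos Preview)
The paper states this corollary without proof, citing Kr\"atzel's book, so there is no argument in the paper to compare against. Your approach---representing $g$ as the double integral of its mixed partial, swapping the order of integration, and then applying the preceding Lemma uniformly to the inner unweighted integrals over $[s,b]\times[t,d]$---is the standard and correct route to such weighted second-derivative bounds.

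One small point worth tightening: the preceding Lemma as stated assumes $b-a\ge 1$ and $d-c\ge 1$, whereas your subrectangles $[s,b]\times[t,d]$ can have arbitrarily short sides as $(s,t)$ approaches $(b,d)$. This is harmless---either one checks that Kr\"atzel's bound holds without that restriction (the ``$1+$'' in the numerator already absorbs small-rectangle degeneracies), or one simply invokes the trivial bound $|I(s,t)|\le (b-s)(d-t)$ on the region where a side is shorter than, say, $1/\Lambda_1$ or $1/\Lambda_2$---but it is the one place where your ``routine geometric check'' should be made explicit. You might also note that since the subrectangles have axis-parallel boundary, $\rho''\equiv 0$ and the $r/\Lambda_2$ term is actually superfluous in your argument; it survives in the stated corollary only as an artifact of copying the full form of the Lemma's conclusion.
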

	\begin{proof}
		To deduce  the corollary, we apply integration by parts once in each variable of the integral. 
	\end{proof}

	%

	\section{Set Up} \label{circlemethod123}
	Let $\phi$ be a Hecke-Maass cusp form, as defined in Subsection \ref{gl3 maass form},  whose Langlands parameters satisfy the following conditions:
	\begin{align}\label{parameters condition}
		{\bf t}_{1} - {\bf t}_{2} \asymp {\bf t}_{1} - {\bf t}_{3} \asymp  T,\ \ {\bf t}_{2} - {\bf t}_{3} \asymp  T^{1-\xi}, \ \  {\bf t}_{j} \asymp  T \quad \, \, j=1,2,3,
	\end{align}
	%
	for $0<\xi<1$.  Let $f$ be a Holomorphic/Maass Hecke-cusp form for the modular group $\mathrm{SL(2,\mathbb{Z})}$. 
	To estimate the central value $L(\phi \times f,1/2)$, we first express it as a weighted Dirichlet series.  
	\begin{lemma}\label{AFE}
		We have
		\begin{align} \label{centrallvalues}
			L(\phi \times f,1/2) \ll_{\epsilon} T^{\epsilon}\sup_{r \leq T^{(3+\epsilon)/2}} \sup_{  Nr^2\leq {T^{3+\epsilon}}} \frac{\left|S_r(N)\right|}{N^{1/2}} + T^{-2020},
		\end{align}
		where $S_r(N)$ is an exponential  sum 
		\begin{align} \label{s(n)-sum}
			S_r(N): = \mathop{\sum }_{n=1}^{\infty} A(r,n) \lambda_{f}(n)  W \left(\frac{n}{N}\right),
		\end{align}
		for some smooth function $W$ supported in $[1,2]$ and satisfying $W^{(j)}(x) \ll_{j} 1$ with $\int W(x) dx =1$. 
	\end{lemma}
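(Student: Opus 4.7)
The plan is to derive the bound via the standard approximate functional equation, followed by a smooth dyadic decomposition and a trivial estimate for the pieces outside the ``good'' range.

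First, starting from the functional equation \eqref{functioeq}, I would consider the integral
\[
I(T) = \frac{1}{2\pi i}\int_{(3)} L(\phi \times f, 1/2 + u)\, G(u)\, \frac{du}{u},
\]
where $G$ is even, holomorphic in $|\Re u| < 10$, rapidly decaying on vertical lines, with $G(0) = 1$. Opening the Dirichlet series on $\Re u = 3$ expresses $I(T)$ as $\sum_{r,n} \frac{A(r,n)\lambda_f(n)}{(nr^2)^{1/2}} V_+(nr^2/T^3)$ with $V_+(y) = \frac{1}{2\pi i}\int_{(3)} y^{-u} G(u) du/u$; shifting the contour to $\Re u = -3$ picks up $L(\phi \times f, 1/2)$ as the residue at $u = 0$ and, after applying \eqref{functioeq} with the change of variable $u \mapsto -u$, produces a dual sum of the same shape but with the coefficients of $\bar{\phi}$ (namely $A(n,r)$) and a new weight $V_-$. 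Stirling's formula \eqref{stirling} applied to the gamma ratios shows that both $V_\pm$ satisfy $V_\pm(y) = 1 + O(y^A)$ as $y \to 0$ and decay faster than any polynomial once $y \gg T^\epsilon$, because $|\gamma(\phi \times f, 1/2)| \asymp T^3$ in the regime ${\bf t}_i \approx T$.

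Next, I would smoothly partition both $r$ and $n$ dyadically ($r \sim R$, $n \sim N$, with $R, N$ running over powers of $2$). Absorbing $V_\pm$ into the bump function $W$ of \eqref{s(n)-sum} reduces the problem, up to a factor of $O(\log^2 T)$, to bounding $\frac{1}{R\sqrt{N}}\sum_{r \sim R}|S_r(N)|$ on each dyadic block. For blocks in the good range $R \leq T^\theta$ and $T^{3-\theta}/R^2 \leq N \leq T^{3+\epsilon}/R^2$, I would estimate $\frac{1}{R}\sum_{r \sim R}|S_r(N)| \leq \max_{r \sim R}|S_r(N)|$, and after taking sups over the $O(\log^2 T)$ dyadic pairs this produces the first term on the right hand side of \eqref{centrallvalues}.

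For the remaining blocks (either $R > T^\theta$, or $R \leq T^\theta$ with $N < T^{3-\theta}/R^2$) I would apply Cauchy--Schwarz jointly in $(r,n)$: combining Lemma \ref{ramanubound}, which gives $\sum_{r \sim R, n \sim N}|A(r,n)|^2 \ll (R^2 N)^{1+\epsilon}$, with the Rankin--Selberg bound $\sum_{n \sim N}|\lambda_f(n)|^2 \ll N$ yields a per-block contribution of order $R^{1/2+\epsilon} N^{1/2+\epsilon} \ll T^{(3-\theta)/2+\epsilon}/R^{1/2}$ under the relevant constraint on $NR^2$, and summing over dyadic $R$ produces the error term $T^{(3-\theta)/2}$. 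The main technical burden lies in the first step, particularly in verifying the rapid decay of $V_\pm$; once that is in hand the rest is routine. The point worth highlighting is that the effective length $T^3$ depends only on the common size $T$ of the individual ${\bf t}_i$ and not on the gaps ${\bf t}_i - {\bf t}_j$, which is what allows the same lemma to cover both the near-singular regime of Theorem \ref{maintheorem} and the generic regime of Theorem \ref{theorem2}.
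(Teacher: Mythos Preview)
Your proposal is correct and follows essentially the same route as the paper: the paper cites the approximate functional equation from Iwaniec--Kowalski rather than deriving it, then splits the $(r,n)$-sum into the same three regions and handles the two ``bad'' regions by Cauchy--Schwarz combined with Lemma~\ref{ramanubound}, exactly as you do. The only cosmetic difference is that the paper does not dyadically partition in $r$ but instead sums over $r$ directly and takes the supremum at the end; your version with a dyadic block in $r$ is equivalent.
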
 
	\begin{proof}
		The proof is standard.  Indeed, it follows from   the approximate functional equation (see \cite{iwaniec}, Theorem 5.3) of $	L(\phi \times f,s)$. 
	\end{proof}
	Note that, upon estimating \eqref{centrallvalues} trivially, we get $$L\left( \phi \times f, {1}/{2}\right) \ll N^{\frac{1}{2}+\epsilon} \ll T^{3/2+\epsilon}.$$
	Thus to get subconvexity  we   need   to get  non-trivial  cancellations in  $S_{r}(N)$. 
	\section{An application of the  delta method} \label{delta symbol inserting}
	As a first step,  we  separate the  oscillatory terms $A(r,n)$ and $\lambda_{f}(n)$  using the delta method. But this step  alone does not suffice for our purpose. We also  reduce the size of the equation $n=m$ detected by the delta symbol by using the  `conductor lowering trick' introduced by   Munshi  in \cite{munshi1}. To this end, we write $S_r(N)$ as 
	\begin{align} \label{s(n)-sum11}
		&\mathop{\sum \sum}_{\substack{m,n=1 \\ n=m}}^{\infty} \, A(r,n) \, \lambda_f(m)W \left(\frac{n}{N}\right) U \left(\frac{m}{N}\right)  \\
		&= \frac{1}{K} \, \int_{\mathbb{R}} W \left(\frac{v}{K}\right) \mathop{\sum \sum}_{\substack{m,n=1 \\ n=m}}^{\infty} \, A(r,n) \, \lambda_f(m)  \left(\frac{m}{n}\right)^{i \left( \Im{\bf t}_{3}+v \right)} \,  W \left(\frac{n}{N}\right) \, U \left(\frac{m}{N}\right) \mathrm{d}v\notag,
	\end{align}
	where $U$ is a smooth function supported in $[1/2,5/2]$ with $U(x)=1$ for $x \in [1,2]$ and $U^{(j)}(x) \ll_{j} 1$, and $K$ is a parameter such that $T^\epsilon<K<T$ . Later we will choose  $K=T^{1-\xi}$.   Notice the presence of the artificial factor  $(m/n)^{i\Im {\bf t}_3}$. A priori, its role is not clear. However it is a crucial input in the paper. Indeed it is useful in getting better dual length  from   the $\mathrm{GL(3)}$ Voronoi formula. As such it  kills the parameter $\Im {\bf t}_3$ in the gamma factor (see Subsection  \ref{Gl3 applying} for more details).  
	By repeated integration by parts, we observe that the $v$-integral 
	$$\frac{1}{K} \int_{\mathbb{R}} \, W\left(\frac{v}{k}\right) \left(\frac{m}{n}\right)^{i v} \, \mathrm{d}v$$ is negligibly small if $\vert n-m\vert \gg N T^{\epsilon}/K$.     We now apply the delta method due to Duke-Friedlander-Iwaniec  to detect the equation $n-m=0$ with the modulus 
	\begin{equation} \label{qvalue}
		Q = \sqrt{\frac{N}{K}} T^{\epsilon}.
	\end{equation}
	On applying Lemma \ref{deltasymbol} to \eqref{s(n)-sum11}, we get that 
	\begin{align} \label{mainsum}
		S_{r}(N) &= \frac{1}{QK} \int_{\mathbb{R}} \, V(x) \,\int_{\mathbb{R}} W \left(\frac{v}{K} \right) \sum_{1\leq q \leq Q} \frac{g(q,x)}{q} \,  \sideset{}{^\star}{\sum}_{a \, {\rm mod}\, q}   \\
		& \nonumber \times \sum_{n=1}^{\infty} \, A(r,n) \, n^{-i( \Im {\bf t}_{3}+v)} \,  e\left(\frac{na}{q}\right) e\left(\frac{nx}{q Q}\right) W\left(\frac{n}{N}\right) \\
		&\nonumber \times  \sum_{m=1}^{\infty} \lambda_{f}(m)  \, m^{i( \Im {\bf t}_{3}+v)} \, e \left(-\frac{ma}{q}\right) e\left(-\frac{mx}{qQ} \right)U\left(\frac{m}{N}\right)   \ dv \  dx, 
	\end{align}
	where $V(x)$ is a bump function with  support in $[-T^{\epsilon},T^{\epsilon}]$.
	Note that the trivial estimation of $S_{r}(N)$  at this stage yields 
	$S_{r}(N) \ll N^{2+\epsilon}.$
	%

	\section{Applications of  $\mathrm{GL(3)}$ and $\mathrm{GL(2)}$ Voronoi Summation formulae}
	In this section, we apply the  $\mathrm{GL(3)}$ and $\mathrm{GL(2)}$ Voronoi summation formulae to the $n$-sum and  the $m$-sum in \eqref{mainsum} respectively.
	
	\subsection{$\textrm{GL(2)}$ Voronoi formula}\label{gl2 for thm 1}
	In this subsection, we apply the  $\mathrm{GL(2)}$ Voronoi summation formula  to the sum over $m$ in \eqref{mainsum} 
	\begin{align}\mathrm{S_2}:= \sum_{m=1}^{\infty} \lambda_{f}(m)  \, m^{i(\Im{\bf t}_{3}+v)} \, e \left(-\frac{ma}{q}\right) e\left(-\frac{mx}{qQ} \right)U\left(\frac{m}{N}\right).
	\end{align}
	
	\begin{lemma} \label{gl2voronoi123}
		We have 
		\begin{align*}
			\mathrm{S_2}
			= \frac{N^{\frac{3}{4}+i(\Im {\bf t}_{3}+v)}}{q^{1/2}}\sum_{\pm} \sum_{m \leq \widetilde{M}} \frac{\lambda_{f}(m)}{m^{1/4}} \, e \left(\frac{\overline{a} m}{q}\right) I_{\pm}(m,x,q) + O(T^{-2020}),
		\end{align*}
		where 
		\begin{equation} \label{iprimeinte}
			I_{\pm}(m,x,q) = \int_{0}^{\infty} U_\pm(y_{2}) y_{2}^{i(\Im{\bf t}_{3}+v)} e \left(-\frac{xNy_{2}}{qQ} \pm \frac{2 \sqrt{y_{2}Nm}}{q}\right) dy_{2},
		\end{equation}
		and $\widetilde{M}$ is  defined in \eqref{mtilde}.
	\end{lemma}
	\begin{proof}
		On  applying  the $\mathrm{GL(2)}$-Voronoi summation formula  (see lemma  \ref{gl2voronoi}) with $g(y)= y^{i(\Im{\bf t}_{3}+v)} e(-yx/qQ) U(y/N)$ to $\mathrm{S_2}$, we get that 
		\begin{equation} \label{cuttingm}
			\mathrm{S_2}= \frac{2 \pi i^k}{q} \sum_{m=1}^{\infty} \lambda_{f}(m) \,e \left(\frac{\overline{a} m}{q}\right) I_{1}(m,x,q),
		\end{equation}	
		where
		\begin{align*}
			I_1(m,x,q)&= \int_{0}^{\infty} y_{2}^{i(\Im{\bf t}_{3}+v)} e \left(\frac{-x y_{2}}{qQ}\right) U\left(\frac{y_{2}}{N} \right) J_{k-1} \left(\frac{4 \pi \sqrt{y_{2}m}}{q}\right) \mathrm{d} y_{2} \\
			&=N^{1+i(\Im{\bf t}_{3}+v)}\int_{0}^{\infty} y_{2}^{i(\Im{\bf t}_{3}+v)} e \left(\frac{-Nx y_{2}}{qQ}\right) U(y_2) J_{k-1} \left(\frac{4 \pi \sqrt{Ny_{2}m}}{q}\right) \mathrm{d} y_{2}.
		\end{align*} 
		Here $k$ is the weight of the  form $f$.  Using  standard properties of the  Bessel functions, we can express $J_{k-1}$ as
		$$J_{k-1}\left(4 \pi z \right)=e(2z) W_{k-1}\left(2z\right) + e\left(-2z\right) \, \overline{W}_{k-1}\left(2z\right),$$
		where $W_{k-1}$ is a smooth function satisfying 
		$$z^{j} W_{k-1}^{(j)}(z) \ll_{j,k} \frac{1}{\sqrt{z}}, \ \ j \geq 0,$$
		for $z \gg 1$. 
		On extracting the oscillations  of $J_{k-1}$, i.e., writing $W_{k-1}$ as 
		$$W_{k-1}(z)=\frac{1}{\sqrt{z}}W_{k-1}^{\prime}(z), \ \ \mathrm{with} \ \ W_{k-1}^{\prime (j)}(z) \ll 1/z^{j}, $$
		we see that the integral $I_1(...)$  can be  expressed as
		\begin{align} \label{gl2 integral}
			\sum_{\pm}\frac{\sqrt{q}N^{1+i(\Im{\bf t}_{3}+v)}}{(Nm)^{1/4}}\int_{0}^{\infty} y_{2}^{i(\Im{\bf t}_{3}+v)} U_{\pm}(y_2)  e \left(-\frac{xNy_{2}}{qQ} \pm \frac{2 \sqrt{y_{2}Nm}}{q}\right)dy_{2},
		\end{align}
		where $U_{\pm}(y_2)$ is a new smooth weight function of the form $$U_+(y_2)={U(y_2)W^{\prime}_{k-1}(2\sqrt{Ny_2m}/q)}/{y_2^{1/4}}, \  U_-(y_2)={U(y_2)\overline{W^{\prime}}_{k-1}(2\sqrt{Ny_2m}/q)}/{y_2^{1/4}}.$$
		By repeated integration by parts, we observe that the above integral  is negligibly small if 
		\begin{equation} \label{mtilde}
			m \geq \max \left\lbrace\frac{T^2 q^2}{N}, K \right\rbrace T^{\epsilon}:=\widetilde{M}.
		\end{equation}
		Finally plugging \eqref{gl2 integral} into \eqref{cuttingm} we get the lemma.
	\end{proof}

	\begin{remark} \label{sizeofnx}
		Let $q \sim C$ and $m \not \asymp \frac{T^2 C^2}{N}$. Then $N|x|/CQ \gg T^{1-\epsilon}$,  otherwise the integral $I_{\pm}(m,x,q)$ is negligibly small.
	\end{remark}

	\subsection{ $\textrm{GL(3)}$ Voronoi formula} \label{Gl3 applying}
	Let 
	\begin{align}\label{s3}
		\mathrm{S_3}:= \sum_{n=1}^{\infty} \, A(r,n) \, n^{-i(\Im{\bf t}_{3}+v)} \,  e\left(\frac{na}{q}\right) e\left(\frac{nx}{q Q}\right) W\left(\frac{n}{N}\right).
	\end{align}
	On applying the $\mathrm{GL(3)}$ Voronoi formula (Lemma \ref{gl3voronoi}) with $$\psi(y)= y^{-i(\Im{\bf t}_{3}+v)} \,  e\left(\frac{yx}{q Q}\right) W\left(\frac{y}{N}\right),$$
	we see that 
	\begin{equation} \label{spivalue}
		\mathrm{S_3}=q \sum_{\pm} \sum_{n_{1}|r q} \sum_{n_{2}=1}^{\infty} \,  \frac{ A(n_{1},n_{2})}{n_{1}n_{2}} \, S\left(r \overline{a}, \pm n_{2};qr/n_{1}\right) G_{\pm} \left(n_2^\star\right),
	\end{equation}
	where $n_2^\star = {n_{1}^{2} n_{2}}/({q ^{3}r})$ and 
	\begin{align} \label{int gl3}
		G_{\pm}( n_2^\star ) = \frac{1}{2 \pi i} \int_{(\sigma)} (n_2^\star)^{-s} \, \gamma_{\pm}(s) \, \tilde{g}(-s) \, \mathrm{d}s.
	\end{align}
	Let  $G_{\pm}(n_2^\star)= G_{0}(n_2^\star) \mp G_{1}(n_2^\star),$
	with 
	$$G_{\ell}(n_2^\star) = \frac{1}{2 \pi i} \int_{(\sigma)} (n_2^\star)^{-s} \, \gamma_{\ell}(s) \, \widetilde{g}(-s) \, \mathrm{d}s, \ \ \   \ell =0,1. $$
	Let's consider  $\tilde{g}(-s)$, which is given by 
	\begin{align}\label{mellin of psi}
		\notag \widetilde{g}(-s)&=\int_{0}^{\infty} V\left(\frac{y_{1}}{N}\right) y_{1}^{-i(  \Im {\bf t}_{3}+v)} e \left(\frac{y_{1}x}{qQ}\right) y_{1}^{-s-1}  dy_{1} \\
		&=N^{-i( \Im {\bf t}_{3}+v)-s}\int_{0}^{\infty} V \left(y_{1}\right) y_{1}^{-i( \Im {\bf t}_{3}+v)} e \left(\frac{y_{1}N x}{qQ}\right) y_{1}^{-s-1}  \, \mathrm{d} y_{1}. 
	\end{align} 
	By repeated integration by parts, we see  that  the above integral is negligibly small if 
	\begin{align}\label{tau+alpha}
		\vert \tau +  \Im {\bf t}_{3} \vert \gg    T^\epsilon\max \Big \{ K,  \frac{N}{qQ}\Big\},
	\end{align}
	where $\tau =\Im s$.  
	Next we analyse  $G_{\pm}\left( n_2^\star \right)$,
	%
	which is given by  
	\begin{align*}
		G_{\pm}(n_2^\star) &= \frac{1}{2 \pi i} \int_{(\sigma)} (n_2^\star)^{-s} \, \gamma_{\pm}(s) \, \widetilde{g}(-s) \, \mathrm{d}s \\ 
		&= \frac{1}{2 \pi } \int_{\mathbb{R}} (n_2^\star)^{-\sigma-i\tau } \, \gamma_{\pm}(\sigma+i\tau) \, \widetilde{g}(-\sigma-i\tau) \, \mathrm{d}\tau. 
	\end{align*}
	Let $N_0:=T^\epsilon N/(qQ) $.  We  decompose  the above  integral as follows:
	$$	G_{\pm}(n_2^\star) =	G_{\pm}^{(1)}(n_2^\star) +	G_{\pm}^{(2)}(n_2^\star) +	G_{\pm}^{(3)}(n_2^\star),$$
	where
	$$	G_{\pm}^{(j)}(n_2^\star)=  \frac{1}{2 \pi } \int_{A_j} (n_2^\star)^{-\sigma-i\tau } \, \gamma_{\pm}(\sigma+i\tau) \, \widetilde{g}(-\sigma-i\tau) \, \mathrm{d}\tau, \ \ j=1,2,3,$$
	with
	\begin{align*}
		&A_1=\{\tau_1 \in \mathbb{R} | \,  N_0^{1-\epsilon} \leq  |\tau_1+{\Im {\bf t}_3}| \leq N_0^{1+\epsilon}  \}, \\  
		& A_2=\{\tau_1 \in \mathbb{R} | \,  |\tau_1+{\Im {\bf t}_3}| < N_0^{1-\epsilon}  \}, \  A_3=\{\tau_1 \in \mathbb{R} | \,  |\tau_1+{\Im {\bf t}_3}| > N_0^{1+\epsilon}  \}. 
	\end{align*}
	Note that $G_{\pm}^{(3)}(n_2^\star)$  is negligibly small due to \eqref{tau+alpha} as $N_0 \gg T^\epsilon\{K, N|x|/(qQ) \} $.  Next we analyse    $G_{\pm}^{(1)}(n_2^\star)$. 
	
	%
	%
	\noindent
	\subsubsection{ Analysis  of   $G_{\pm}^{(1)}(n_2^\star)$.}
	In this case, we observe that 
	\begin{align}\label{tau+1}
		|\tau_1+\Im {\bf t}_2|=|  \tau_1+\Im{\bf t}_3-\Im{\bf t}_3+\Im{\bf t}_2| \asymp N_0,
	\end{align}
	unless $ N_0 \asymp  |\Im{\bf t}_2-\Im {\bf t}_{3}| \asymp T^{1-\xi} $. We choose $K = T^{1-\xi}$. Thus if   $N_0 \asymp K$,   we might have $|\tau_1+\Im {\bf t}_{2}| \ll K$. We also have 
	\begin{align}\label{tau+2}
		|\tau_1+\Im{\bf t}_1|=|\tau_1+\Im{\bf t}_3-\Im {\bf t}_3+\Im {\bf t}_1| \asymp  (N_0+T),
	\end{align}
	unless  $N_0 \asymp | \Im{\bf t}_1-\Im{\bf t}_3| \asymp T $ in which case, we might have $ |\tau+\Im {\bf t}_1| \ll N_0+T$.  Let 
	$$G_{\pm}^{(1)}(n_2^\star)=G_{\pm}^{(1,1)}(n_2^\star)+G_{\pm}^{(1,2)}(n_2^\star),$$
	where $$G_{\pm}^{(1,j)}(n_2^\star)= \frac{1}{2 \pi } \int_{A_{(1,j)}}  (n_2^\star)^{-\sigma-i\tau } \, \gamma_{\pm}(\sigma+i\tau) \, \widetilde{g}(-\sigma-i\tau) \, \mathrm{d}\tau, \ \ j=1,2,  $$
	with \begin{align}\label{A11}
		A_{(1,1)}:=\{\tau_1\in A_1| \ T^\epsilon \leq | \tau_1+\Im {\bf t}_2| \ll N_0,\ T^\epsilon \leq | \tau_1+\Im {\bf t}_1| \ll N_0+T  \}
	\end{align}
	and $A_{(1,2)}:=A_1\backslash A_{(1,1)}$.  We will  now analyse these integrals using Stirling's formula   \eqref{stirling}. Let's consider $G_{\pm}^{(1,1)}(n_2^\star)$, which is given as 
	$$G_{\pm}^{(1,1)}(n_2^\star)= G_{0}^{(1,1)}(n_2^\star)+ G_{1}^{(1,1)}(n_2^\star),$$
	where  
	$$G_{\ell}^{(1,1)}(n_2^\star)= \int_{A_{(1,1)} } (n_2^\star)^{-\sigma-i\tau } \, \gamma_{\ell}(\sigma+i\tau) \, \widetilde{g}(-\sigma-i\tau) \, \mathrm{d}\tau,  \ \ \ \ell=0,1.$$
	We will  further focus on  $G_{0}^{(1,1)}(n_2^\star)$ as $G_{1}^{(1,1)}(n_2^\star)$ can  be  analysed similarly. 
	Consider 
	$$\gamma_{0}(\sigma+i\tau)= \frac{\pi^{-3(\sigma+i\tau)-\frac{3}{2}}}{2} \, \prod_{j=1}^{3} \frac{\Gamma\left(\frac{1+\sigma+i\tau+ {\bf t}_j}{2}\right)}{\Gamma\left(\frac{-\sigma-i\tau-  {\bf t}_j}{2}\right)}.$$
	Let's  assume for simplicity that  $\Re {\bf t}_j=0$, $1\leq j \leq 3$. Indeed this condition holds under the generalised Ramanujan conjecture.
	For notational convenience, we continue writing  ${\bf t}_j$ in place of $\Im {\bf t}_j$ (abusing notation). Thus $\gamma_0(\sigma+i\tau)$ is given as 
	$$\gamma_{0}(\sigma+i\tau)= \frac{\pi^{-3s-\frac{3}{2}}}{2} \, \prod_{j=1}^{3} \frac{\Gamma\left(\frac{1+\sigma+i\tau+ i{\bf t}_j}{2}\right)}{\Gamma\left(\frac{-\sigma-i\tau- i {\bf t}_j}{2}\right)},$$
	On applying  Stirling's formula  \eqref{stirling} to $\gamma_0(\sigma+i\tau)$, we see that 
	\begin{align}\label{gamma bound}
		\gamma_{0}(\sigma+i\tau) \ll  \prod_{j=1}^{3} e^{-\frac{\pi}{2}\,  |\tau+{\bf t}_{j}|}|\tau+{\bf t}_j|^{1/2+\sigma}.
	\end{align}
	%
	Thus on plugging it  in place of $\gamma_0(\sigma+i \tau)$ and a corresponding expression for  $\gamma_1(\sigma+i \tau)$  in place of $\gamma_1(\sigma+i\tau)$ in  $G_{0}^{(1,1)} \left(n_2^\star\right)$ and $G_{1}^{(1,1)} \left(n_2^\star \right)$ respectively, we see that 
	\begin{align*}
		G_{\pm}^{(1,1)} \left( n_2^\star \right)\ll \int_{A_{(1,1)}} \left(\frac{ 8 \pi^3  n_{2} ^\star N}{ \prod_{j=1}^{3}  |\tau + {\bf t}_j |}\right)^{- \sigma}  \prod_{j=1}^{3} e^{-\frac{\pi}{2}\,  |\tau+{\bf t}_{j}|} \,  |\tau+{\bf t}_j |^{1/2}\,   \mathrm{d}\tau.
	\end{align*}
	Here we  used $ \widetilde{g}(-\sigma-i\tau) \ll N^{-\sigma}$ from \eqref{mellin of psi}.  Upon shifting $\sigma$ to the right towards  infinity, we see that  the above integral is negligibly small if 
	\begin{align*}
		\frac{8 \pi^3  n_{2}^\star N}{ \prod_{j=1}^{3} |\tau + {\bf t}_j |} \gg T^{\epsilon} \iff n_1^2n_2 \gg \frac{q^3 rT^{\epsilon} }{N}\prod_{j=1}^{3} |\tau + {\bf t}_j |.
	\end{align*}
	
	By  \eqref{tau+1} and   \eqref{tau+2}, we see that 
	\begin{align} \label{size of tau}
		\prod_{j=1}^{3} |\tau + \alpha_j | \ll T^\epsilon \left(T+\frac{N}{qQ}\right)\left(\frac{N}{qQ}\right)\frac{N}{qQ} \ll \left(T+\frac{N}{qQ}\right)\frac{N^2T^\epsilon}{q^2Q^2},
	\end{align}     
	Thus    $G_{\pm}^{(1,1)} \left(n_2^\star \right)$ is negligibly small unless 
	$$  n_1^2n_2  \ll  \frac{ q^3rT^\epsilon}{ N} \left(T+\frac{N}{qQ}\right)\frac{N^2}{q^2Q^2}= { rT^\epsilon} \left(Tq+\frac{N}{Q}\right)\frac{N}{Q^2}.$$
	Note that 
	\begin{align}\label{3.2}
		{ rT^\epsilon} \left(Tq+\frac{N}{Q}\right)\frac{N}{Q^2} \ll   { rT^\epsilon} \left(TQ+\frac{N}{Q}\right)\frac{N}{Q^2} \ll  { rT^\epsilon} \frac{NT}{Q}. 
	\end{align}
	In further analysis, we will work with the range   
	\begin{align}\label{ntilde12}
		n_2n_1^2  \ll  { rT^\epsilon} \frac{NT}{Q}=\frac{rT^\epsilon Q^3 K^2 T}{N} :={\widetilde{N}}.
	\end{align}
	In this range, on shifting the contour  to $\sigma =-1/2$, and using Stirling formula   \eqref{stirling},  we see that  $G_{\pm}^{(1,1)} \left(n_2^\star\right)$, upto a negligibly small error term, is given by 
	\begin{align*}
		&\frac{\left(n_2^\star N\right)^{{1}/{2}}}{N^{-i(v +{\bf t}_3 )}}  \int_{A_{(1,1)}} W_1 (\tau) e \left( h_1(\tau) \right) 
		\int_{0}^{\infty} V\left(y_{1}\right) y_{1}^{-i({\bf t}_3+v)} e \left(\frac{y_{1}N x}{qQ}\right) y_{1}^{-1/2 - i \tau}  \, \mathrm{d} y_{1}\ \mathrm{d}\tau,
	\end{align*}
	where $W_1$ is a smooth function satisfying $\tau^jW_1^{(j)}(\tau) \ll_{\epsilon,j }  T^{j\epsilon}$ and 
	\begin{align} \label{h0value}
		h_1(\tau)= -\frac{\tau}{2\pi}+\frac{1}{2\pi} \sum_{j=1}^{3} (\tau + {\bf t}_j )\log|\tau + {\bf t}_j | -\frac{\tau}{2\pi}\log \left(8\pi^3n_2^\star N \right). 
	\end{align}
	We  now  split the range of the $\tau$-integral  as follows: 
	$$ \int_{A_{(1,1)}} = \sum_{ N_0^{1-\epsilon}  \ll  \mathfrak{c} \ll N_0^{1+\epsilon}} \int_{ A_{\mathfrak{c}}}, $$
	where $A_{\mathfrak{c}}$ consists of those $\tau_1 \in \mathbb{R}$ such that 
	\begin{align}\label{ac}
		\mathfrak{c} \leq |\tau_1+{\bf t}_3| \leq \mathfrak{c}(1+1/10),\,   T^\epsilon \leq  |\tau_1+{\bf t}_1| \ll (\mathfrak{c} +T)T^\epsilon, T^\epsilon \leq  |\tau_1+{\bf t}_2| \ll \mathfrak{c}^{1+\epsilon},
	\end{align}
	and $\mathfrak{c} $ is of the form $N_0^{1-\epsilon}(1+\ell /10)$ for $0\leq \ell \ll N_0^{2\epsilon}$. 
	It means that we divide the range into the segments  $[N_0^{1-\epsilon },  (1+1/10) N_0^{1-\epsilon}),$ $[N_0^{1-\epsilon }(1+1/10),  (1+2/10) N_0^{1-\epsilon})$, and so on. Thus we arrive at the following expression of $G_{\pm}^{(1,1)} \left(n_2^\star\right)$ 
	\begin{align*}
		&\frac{\left(n_2^\star N\right)^{{1}/{2}}}{N^{-i(v +{\bf t}_3 )}}   \sum_{ \mathfrak{c} } \int_{ A_{\mathfrak{c}}} W_1 (\tau) e \left( h_1(\tau) \right) 
		\int_{0}^{\infty} V\left(y_{1}\right) y_{1}^{-i({\bf t}_3+v)} e \left(\frac{y_{1}N x}{qQ}\right) y_{1}^{-1/2 - i \tau}  \, \mathrm{d} y_{1}\ \mathrm{d}\tau.
	\end{align*}

			%
			Next we consider $G_{\pm}^{(1,2)} \left(n_2^\star\right)$,  which is given by 
			$$G_{\pm}^{(1,2)}(n_2^\star)= \int_{A_{(1,2)} } (n_2^\star)^{-\sigma-i\tau } \, \gamma_{\pm }(\sigma+i\tau) \, \widetilde{g}(-\sigma-i\tau) \, \mathrm{d}\tau,  $$
			where  $A_{(1,2)}=A_{(1,2,1)} \cup A_{(1,2,2)}$ with 
			$$A_{(1,2, \ell)} =\{\tau_1 \in A_{(1,2)}  |\,   \tau_1+{\bf t}_3| \asymp N_0,\,     |\tau_1+{\bf t}_\ell| <T^\epsilon \}, \ \ \ell=1,2.$$
			Note that $A_{(1,2, 1)}\cap A_{(1,2, 2)}=\phi$. Thus $G_{\pm}^{(1,2)}(n_2^\star)=G_{\pm}^{(1,2,1)}(n_2^\star)+G_{\pm}^{(1,2,2)}(n_2^\star),$
			where $$G_{\pm}^{(1,2,\ell)}(n_2^\star)= \int_{A_{(1,2, \ell)} } (n_2^\star)^{-\sigma-i\tau } \, \gamma_{\pm }(\sigma+i\tau) \, \widetilde{g}(-\sigma-i\tau) \, \mathrm{d}\tau, \ \ \ell=1,2.$$
			Let's   consider $G_{\pm}^{(1,2,1)}(n_2^\star)$. 
			Recall from \eqref{tau+2} that this situation can occur only if $N_0 \asymp T$. Thus we have $q\ll T^\epsilon QK/T$. 
			%
			Since $\Re(1+s+i{\bf t}_3)>0$,  the following Stirling's bound (see  \eqref{stirling} and \eqref{gamma bound}) still holds:
			\begin{align}
				\gamma_{0}(\sigma+i\tau) \ll  \prod_{j=1}^{3} e^{-\frac{\pi}{2}\,  |\tau+{\bf t}_{j}|}|\tau+{\bf t}_j|^{1/2+\sigma}.
			\end{align}
			Now  proceeding as before, we see that $G_{\pm}^{(1,2,1)}(n_2^\star)$ is 
			negligibly small if 
			\begin{align*}
				\frac{8 \pi^3 n_{1}^2 n_{2}N}{q^3 r \prod_{j=1}^{3} |\tau + {\bf t}_{j}|} \gg T^{\epsilon} \iff  n_1^2n_2 \gg    \frac{q^3 rT^{\epsilon} }{N}\prod_{j=1}^{3} |\tau + {\bf t}_{j}|.
			\end{align*}
			Using $ \prod_{j=1}^{3} | \tau + {\bf t}_{j}| \ll T^{2+\epsilon}$ and   $q\ll T^{\epsilon} QK/T$, we see that   the effective range of $n_2$ is given by 
			\begin{align}\label{n2 nongeneric}
				n_2 \ll \frac{1}{n_1^2}\frac{r Q^3 K^3 }{TN} T^{\epsilon}.
			\end{align}
			Note that the above  length is  $K/T^2$ times  the generic  $n_2$-length  \eqref{ntilde12}. On   analysing the terms corresponding to $\tau+{\bf  t}_2$ and  $\tau+{\bf t}_3$ like before, we see that we save more in this situation   at the last (as the  $n_2$-length is shorter).  Thus,  we will get better bounds for $G_{\pm}^{(1,2,1)} \left(n_2^\star\right)$. 
			On analysing  $G_{\pm}^{(1,2,2)}(n_2^\star)$ in the same way, and using $ \prod_{j=1}^{3} | \tau + {\bf t}_{j}| \ll KT^{1+\epsilon}$, $q\leq Q$, we see that $n_2$-length is $1/K$-times the generic $n_2$-length in this situation. Hence we save $K$ extra at the end.  
			
			\ 
			
			\noindent
			{\bf Case 2}: Analysis of  $G_{\pm}^{(2)}(n_2^\star)$. 
			
			In this case, we have  $ \vert \tau + {\bf t}_3\vert \ll  N_0^{1-\epsilon}$, which can happen only if  $N/(qQ) \asymp K \asymp N_0 $. On using \eqref{tau+2}, we see that  $|\tau+{\bf t}_{1}| \asymp T $, as $N_0<T$. 
			We also  note that 
			\begin{align*}
				|\tau+{\bf t}_2|=|\tau+{\bf t}_3-{\bf t}_3+{\bf t}_2| \asymp |{\bf t}_2-{\bf t}_{3}| \asymp K.
			\end{align*}
			%
			%
			%
			%
			Note that   we are in a similar situation as that  in Case 1.   Indeed the  role of ${\bf t}_{2}$ and ${\bf t}_{3}$ got interchanged here.  Now  for $T^\epsilon\leq  |\tau+{\bf t}_3| \ll K^{1-\epsilon}$, we can  analyse  the integral tranform like Case 1.    The case  $|\tau+{\bf t}_3|<T^\epsilon$ can also  be    analysed  similarly.   In further analysis, we will focus on Case 1 only.  
			We summarise this subsection in the following lemma. 
			\begin{lemma} \label{gl3voroi1233}
				Let $\mathrm{S_3}$ be the sum over $n_2$ defined in \eqref{s3}. Then  we arrive at the following expression for $\mathrm{S_3}$
				\begin{align} \label{s3 after gl3}
					\frac{N^{1/2 - i( {{\bf t}_3} +v)}}{q^{1/2} r^{1/2}}  \sum_{\pm}  \sum_{n_{1}|qr} &\sum_{n_{2} \ll {\widetilde{N}}/ {n_{1}^{2}}} \frac{ \lambda_\pi(n_1,n_2) }{ \sqrt{ n_2}}   S \left(r \overline{a}, \pm n_{2};qr/n_{1} \right) G_{\star} \left({ n_{2}^\star N}\right)  \notag\\
					&{\mathrm{lower\ order \ terms}}+O(T^{-2020}),
				\end{align}
				where the integral transform $	G_{\star} \left( n_2^\star N\right)$ is given by 
				\begin{align} \label{gl3integral}
					& \sum_{ N_0^{1-\epsilon}  \ll  \mathfrak{c} \ll N_0^{1+\epsilon}}  \int_{A_{\mathfrak{c}}} W_{1}(\tau) e \left(h_{1}(\tau) \right) \int_{0}^{\infty} V \left(y_{1}\right) y_{1}^{-i({{\bf t}_3}+v)} e \left(\frac{y_{1}N x}{qQ}\right) y_{1}^{-1/2 - i \tau}  \, \mathrm{d} y_{1}\ \mathrm{d}\tau ,
				\end{align}
				where  $A_{\mathfrak{c}}$ is the set defined  in \eqref{ac},   $W_1$ is a smooth function satisfying $x^jW_1^{(j)}(x) \ll_j T^\epsilon$ and 
				\begin{align}\label{h_1-1}
					h_1(\tau)=-\frac{\tau}{2\pi}+\frac{1}{2\pi}\sum_{j=1}^{3} (\tau + {{\bf t}_j})\log|\tau +{{\bf t}_j}| - \frac{\tau}{2\pi} \log \left(8\pi^3 n_{2}^\star N \right).
				\end{align} 
			\end{lemma}

			\subsection{$S_r(N)$ after dualization}
			We conclude this section by recording the above analysis in the following lemma. 
			\begin{lemma} \label{aftervoronoisum}
				Let $S_r(N)$ be as in \eqref{mainsum}. Let $\widetilde{N}$ and $\widetilde{M}$ be as in \eqref{ntilde12} and  \eqref{mtilde} respectively. Then we have 
				\begin{align}  \label{aftervoronoi}
					S_{r}(N)=& \frac{N^{5/4} }{K Q r^{1/2}} \sum_{ N_0^{1-\epsilon}  \ll  \mathfrak{c} \ll N_0^{1+\epsilon}}   \sum_{1\leq q \leq Q} \frac{1}{q^2} 
					\sum_{\pm}  \sum_{n_{1}|qr}   \sum_{n_{2} \ll \tilde{N}/n_{1}^2}  \frac{A(n_{1},n_{2})}{\sqrt{n_{2}}}  \\
					& \times
					\sum_{m \ll \widetilde{M}} \frac{\lambda(m)}{m^{1/4}}    \mathcal{C}(n_{1}^2 n_{2}, q,m) \mathcal{J}(m,n_{1}^2n_{2},q) + O(T^{-2020}),\notag
				\end{align}
				where
				$ \mathcal{C}(n_{1}^2 n_{2}, q,m) $ is the character sum given by 
				\begin{align*}
					\sideset{}{^{\star}}{\sum}_{a \, \mathrm{mod} \, q} S \left(r \bar{a}, \pm n_{2}, qr/n_1\right) \, e\left(\frac{\bar{a} m}{q}\right)&=  \sideset{}{^{\star}}{\sum}_{\beta \, \mathrm{mod}\,  {qr}/{n_1}} \, e\left(\frac{ \pm \bar{\beta} n_{2}}{qr/n_{1}}\right)	\sideset{}{^{\star}}{\sum}_{a \, \mathrm{mod} \, q}e\left(\frac{\bar{a}(m+\beta n_1)}{q}\right)\\
					&= \sum_{d \vert q} d \mu\left(\frac{q}{d}\right) \sum_{\substack{\beta \;  \mathrm{mod} \;  {qr}/{n_1} \\  n_{1} \beta \;  \equiv \; - m \; \mathrm{mod} \;  d}} \; e \left(\frac{\pm \overline{\beta} n_{2}}{qr/n_{1}}\right),
				\end{align*} and 
				
				\begin{align}\label{ mathcali-1}
					\mathcal{J}(m,n_{1}^2n_{2},q)=	\int_{\mathbb{R}} W(x/Q^\epsilon) g(q,x) \int_{\mathbb{R}} V\left(\frac{v}{K}\right) G_{\star} \left({n_{2}^\star N} \right) \, I^{\prime}_{1}(m,x,q) \, \mathrm{d} v \, \mathrm{d}x,
				\end{align}
				with $I^{\prime}(m,x,q)$ and $G_{\star}(n_2^\star N)$  as in  \eqref{iprimeinte} and \eqref{gl3integral} respectively.
			\end{lemma}

			\section{An application of Cauchy and Poisson}\label{cauchy and poisson}
			\subsection{Cauchy's inequality}
			After the  Voronoi formulae, we apply  Cauchy's inequality to the sum over $n_2$  in \eqref{aftervoronoi}.  On dividing the $q$-sum and $m$-sum in \eqref{aftervoronoi} into dyadic blocks $q \sim C$, $C\ll Q$ and $m \sim M_1$, $M_1 \ll \widetilde{M}$ respectively, and   spliting  $q$ as $q=q_1q_2$ with $q_1 |(n_1r)^{\infty}$,  $(q_{2},n_1r)=1$, we see that $S_{r}(N)$ is bounded by
			\begin{align*}
				\sup_{ \substack{ C, \,  M_1 
				}}	\ \frac{T^{\epsilon} N^{5/4}}{r^{1/2} KQ C^2} \, & \sum_{\frac{n_{1}}{(n_{1},r)} \ll C}  \sum_{\frac{n_{1}}{(n_{1},r)} \vert q_{1} \vert (n_{1}r)^{\infty}}\sum_{n_2 \ll \widetilde{N}/n_1^2} \frac{| \lambda_\pi(n_{1},n_{2})| }{ n_2^{1/2}} \\ 
				& \times \Big|   \sum_{q_{2} \sim {C}/{q_{1}}} \; \sum_{m  \sim  {M_1}} \frac{\lambda_f(m) }{m^{1/4}} \;  \mathcal{C}(n_{1}^2 n_2,q,m) \, \mathcal{J}(m,n_{1}^2 n_{2},q)  \Big|,
			\end{align*} 
			Now on  applying   Cauchy's inequality to the $n_{2}$-sum we arrive at 
			\begin{align} \label{srsum}
				S_{r}(N) \ll 	\sup_{ \substack{C, \,  M_1 
				}}	\ \frac{T^{\epsilon} N^{5/4}}{r^{1/2} KQ C^2}  \sum_{\frac{n_{1}}{(n_{1},r)} \ll C} \Xi^{1/2}  \, \sum_{\frac{n_{1}}{(n_{1},r)} \vert q_{1} \vert (n_{1}r)^{\infty}} \, \Omega^{1/2}, 
			\end{align} 
			where 	$$\Xi = \sum_{n_2 \ll \widetilde{N}/n_1^2} \frac{\vert \lambda_\pi(n_{1},n_{2})\vert^2}{n_{2}},$$
			and
			\begin{equation} \label{Omega123}
				\Omega = \sum_{n_2 \ll \widetilde{N}/n_1^2} \Big \vert \sum_{q_{2} \sim {C}/{q_{1}}} \sum_{m \sim M_{1}} \frac{\lambda_f(m) }{m^{1/4}} \;  \mathcal{C}(n_{1}^2 n_2,q,m) \, \mathcal{J}(m,n_{1}^2 n_{2},q) \Big \vert^2
			\end{equation}
			
			with
			\begin{equation} \label{paravlues}
				\widetilde{N} = \frac{r Q^3 K^2 T}{N} T^{\epsilon}, \quad  M_1 \ll \widetilde{M}= \left( \frac{T^{2} C^2}{N} + K \right) T^{\epsilon}. 
			\end{equation}
			\subsection{The Poisson summation formula}
			In the next step,  we will  analyse $\Omega$ using the Poisson summation formula. We have the following lemma.
			
			\begin{lemma} \label{Omega3}
				Let $\Omega$ be as in \eqref{Omega123}. Let $q=q_1q_2$ and $q^\prime=q_1q_2^{\prime}$. Then we have 
				\begin{align} \label{omegavalue}
					\Omega \ll   \sup_{N_1 \ll \widetilde{N}} \frac{{N_1} T^{\epsilon}}{n_1^2M_{1}^{1/2} } \; & \mathop{\sum \sum}_{q_{2},\,  q_{2}^{\prime} \sim {C}/{q_{1}}}  \; \mathop{\sum \sum}_{m,\, m^{\prime} \sim M_{1}}  \sum_{n_{2} \in \mathbb{Z}} \left| \mathfrak{C}(...)\right| \; \left|\mathcal{I}(...)\right|,
				\end{align} 
				where 
				$$\mathfrak{C}(...)= \mathop{\sum \sum}_{\substack{d |q \\ d^{\prime} | q^{\prime}}} d d^{\prime} \mu \left(\frac{q}{d}\right) \mu \left(\frac{q^{\prime}}{d^{\prime}}\right) \; \mathop{\sideset{}{^\star}{\sum}_{\substack{\beta \;\mathrm{mod} \; \frac{q_{1}q_{2}r}{n_{1}} \\  n_{1} \beta \;  \equiv \; - m \; \mathrm{mod} \;  d}} \, \sideset{}{^\star}{\sum}_{\substack{\beta^{\prime} \; \mathrm{mod} \; \frac{q_{1}q_{2}^{\prime}r}{n_{1}} \\  n_{1}   \beta^{\prime} \;  \equiv \; - m^{\prime} \; \mathrm{mod} \;  d^{\prime}}}}_{\overline{\beta} q_{2}^{\prime} - \overline{\beta^{\prime}} q_{2} + n_{2} \; \equiv \; 0 \; \left(\frac{q_{1} q_{2} q_{2}^{\prime}r}{n_{1}} \right)} \; 1 $$
				and
				\begin{align}\label{integral after cauchy}
					\mathcal{I}(...)= \int V_1(y) \; \mathcal{J} \left({m, N_1} y, q\right) \; \overline{\mathcal{J}}( m^\prime, N_1 y, q^\prime ) \; e \left(\frac{-n_{2} {N_1} y}{n_{1} q_{1} q_{2} q_{2}^{\prime}r}\right) \; \mathrm{d} y,
				\end{align}
				where (recall from \eqref{ mathcali-1})  $ \mathcal{J} \left({m, N_1} y, q\right) $ is given by
				\begin{align}\label{matcalj}
					\int_{\mathbb{R}} W(x/Q^\epsilon) g(q,x) \int_{\mathbb{R}} V\left(\frac{v}{K}\right) G_{\star} \left({ \frac{N_1Ny}{q^3r}} \right) \, I^{\prime}_{1}(m,x,q) \, \mathrm{d} v \, \mathrm{d}x,
				\end{align}
				and  ${\mathcal{J}}( m^\prime, N_1 y, q^\prime ) $ is defined similarly. 
			\end{lemma}
			
			\begin{proof}
				We first split the sum over $n_2$  in \eqref{Omega123} into dyadic blocks $n_1^2n_2 \sim N_1$, $N_1 \ll \widetilde{N}$ and  introduce a non-negative smooth bump function $V_1$  supported  on $[2/3, \, 3]$ with $V_1(y)=1$ for $w \in [1,\, 2]$ and $V_1^{(j)}(w) \ll_j 1$ and  we  arrive at 
				$$\Omega \ll  T^{\epsilon} \sup_{N_1 \ll \widetilde{N}}\sum_{n_2 \in \mathbb{Z}} V\left(\frac{n_{1}^{2}n_{2}}{{N_1}}\right) \Big \vert \sum_{q_{2} \sim {C}/{q_{1}}} \sum_{m \sim M_{1}} \frac{\lambda_f(m) }{m^{1/4}} \;  \mathcal{C}(n_{1}^2 n_2,q,m) \, \mathcal{J}(m,n_{1}^2 n_{2},q) \Big \vert^2.$$
				Next we open the absolute value square and interchange the summations to arrive at 
				\begin{align*}
					T^{\epsilon} \sup_{N_1 \ll \widetilde{N}}\mathop{\sum \sum}_{q_{2}, q_{2}^{\prime} \sim {C}/{q_{1}}}  \; \mathop{\sum \sum}_{m,m^{\prime} \sim M_{1}} \frac{\lambda_{f}(m)\lambda_{f}(m^\prime)}{(mm^\prime)^{1/4}}  \sum_{n_{2} \in \mathbb{Z}}V_1\left(\frac{n_{1}^{2}n_{2}}{{N_1}}\right)\mathcal{C}(...)\overline{\mathcal{C}}(...)\mathcal{J}(...)\overline{{\mathcal{J}}}(...).
				\end{align*}
				Now on  applying the Poisson summation formula to the $n_{2}$-sum with the modulus $q_{1}q_{2}q_{2}^{\prime}r/n_{1}$ and using Delinge's bound   $|\lambda_{f}(m)| \leq m^{\epsilon}$, $|\lambda_{f}(m^\prime)| \leq m^{ \prime \epsilon}$ we get the lemma.
			\end{proof}

			\section{Estimates for the integral transform}
			In this section, we will analyse the integral transform $\mathcal{I}(...)$ defined in \eqref{integral after cauchy}.  We begin by analysing $ \mathcal{J} \left(m,{N_1} y, q\right)$, which is given by (see  \eqref{matcalj})
			\begin{align}\label{int before cauchy}
				\int_{\mathbb{R}} W(x/Q^\epsilon) g(q,x) \int_{\mathbb{R}} V\left(\frac{v}{K}\right)  G_{\star} \left({ \frac{N_1Ny}{q^3r}} \right)  \, I^{\prime}_{1}(m,x,q) \, \mathrm{d} v \, \mathrm{d}x. 
			\end{align}
			We have the following lemma.
			\begin{lemma} \label{jcalvalue}
				We have  
				\begin{align}\label{simplified j}
					\mathcal{J} \left(m,{N_1} y, q\right)=\int_{\mathbb{R}} V\left(\frac{v}{K}\right)   \int_{u \ll \frac{T^\epsilon C}{QK}}\, \mathrm{I}_u \, 	I(N_1y,m,q)
					\, \mathrm{d}u \,\mathrm{d} v + \mathrm{lower \ order \ terms},
				\end{align}
				where 
				\begin{align}
					\mathrm{I_u}=  \int_{\mathbb{R}} W(x/Q^\epsilon)  g(q,x)e \left(\frac{Nxu}{qQ}\right)\, \mathrm{d}x,
				\end{align}
				\begin{align}
					I(N_1y,m,q)= \sum_{   \mathfrak{c} } \int_{ A_{\mathfrak{c}} }  W_{3}\left( \frac{|\eta-{\bf t}_3|}{\sqrt{mN}/q}  \right)   |\eta -{\bf t}_{3}|^{-1/2} \;(N_1y)^{-i(\eta-{\bf t}_3)} e( h(\eta))  \; \mathrm{d} \eta. 
				\end{align}
				Here  $W_3$ a smooth function supported on $[1/2,5/2]$ and satisfying $x^jW_3^{(j)}(x)\ll T^{\epsilon}$ and 
				\begin{align}\label{hvalue}
					h(\eta)=-\frac{(\eta -{\bf t}_3)}{2\pi}+\frac{1}{2\pi}&\sum_{j=1}^{3} (\eta -{\bf t}_3 + {\bf t}_{j}) \, \log |\eta -{\bf t}_3 + {\bf t}_{j}| \notag \\
					&-\frac{(\eta-{\bf t}_3)}{\pi}\log\left({c|\eta-{\bf t}_{3}|}\right),
				\end{align}
				with  $$c=c(q,m):=\sqrt{\frac{8\pi^3 N}{q^3r}}\frac{q}{ 2 \pi e \sqrt{Nm} }.$$
				\begin{remark}
					Lower order terms come from the stationary phase analysis. They  have the same phase function as the main term.  Thus, on analysing them like the main term,  we save  at least $T$ extra (size of the second derivative)  at the end. 
				\end{remark}
			\end{lemma}
			\begin{proof}
				On plugging the expression of $G_{\star} \left({ \frac{N_1Ny}{q^3r}} \right)$    from  \eqref{gl3integral}  (replacing $n_2^\star$ by $N_1y/(q^3r)$)  and the expression of $ I^{\prime}_{1}(m,x,q)$ from    \eqref{iprimeinte}, we see that 
				\begin{align*}
					\mathcal{J} \left(m,{N_1} y, q\right)=&   \int_{\mathbb{R}} W(x/Q^\epsilon)  g(q,x) \int_{\mathbb{R}} W\left(\frac{v}{K}\right)  \sum_{   \mathfrak{c} } \int_{ A_{\mathfrak{c}} } W_{1}\left( \tau \right) e \left(h_{1}(\tau) \right) \\
					& \times  \int_{0}^{\infty} V \left(y_{1}\right) y_{1}^{-i({{\bf t}_3}+v)} e \left(\frac{y_{1}N x}{qQ}\right) y_{1}^{-1/2 - i \tau}  \, \mathrm{d} y_{1}\ \mathrm{d}\tau  \\
					& \times \int_{0}^{\infty} U_{\pm}(y_{2}) y_{2}^{i({\bf t}_{3}+v)} e \left(-\frac{xNy_{2}}{qQ} \pm \frac{2 \sqrt{y_{2}Nm}}{q}\right) dy_{2} \, \mathrm{d} v \, \mathrm{d}x.
				\end{align*} 
				Recall from \eqref{h_1-1} that $h_1(\tau)$ is given by 
				\begin{align}\label{h_1}
					h_1(\tau)=-\frac{\tau}{2\pi}+\frac{1}{2\pi}\sum_{j=1}^{3} (\tau + {\bf t}_j)\log|\tau +{\bf t}_j| - \frac{\tau}{2\pi} \log \left( \frac{8\pi^3  NN_1y }{q^3r}\right).
				\end{align} 
				Note that we have replaced $n_2^\star $ by  $N_1y/(q^3r)$ in \eqref{h_1-1}. On analysing  the $x$-integral or   the $v$-integral (using the intrgration by parts),  we get the restriction 
				$$y_1-y_2 \ll T^\epsilon C/QK.$$
				Thus, upon writing $y_1 = y_2 + u$ with $|u|\ll {T^\epsilon C }/{QK}$, we arrive at the following expression of    $ \mathcal{J}(m,N_1y,q)$ 
				\begin{align}\label{int after c}
					\int_{\mathbb{R}} V\left(\frac{v}{K}\right)   \int_{u \ll \frac{T^\epsilon C}{QK}}\, \mathrm{I}_u 
					\sum_{   \mathfrak{c} } \int_{ A_{\mathfrak{c}} }  W_{1}\left(\tau\right)\, \, e \left(h_{1}(\tau)\right)\, \mathrm{I_2}(u, \tau,v)\mathrm{d}\tau \, \mathrm{d}u \,\mathrm{d} v 
				\end{align}
				where  
				\begin{align}
					\mathrm{I_u}=  \int_{\mathbb{R}} W(x/Q^\epsilon)  g(q,x)e \left(\frac{Nxu}{qQ}\right)\, \mathrm{d}x,
				\end{align}
				and 
				\begin{align}
					\mathrm{I_2}(u, \tau,v)= \int_{0}^{\infty}  W_{u,v,\tau}(y_2)\,y_{2}^{ - i \tau}\, e \left( \pm \frac{2 \sqrt{y_{2}Nm}}{q}\right) \mathrm{d} y_2,
				\end{align}
				with  
				$$W_{u,v,\tau}(y_2):=\frac{V \left(y_2+u\right)U_{\pm}(y_2)}{\sqrt{y_2+u}} \left(1+\frac{u}{y_2}\right)^{-i( {\bf t}_3+v+\tau)}.$$
				Note that $\left(1+{u}/{y_2}\right)^{-i({\bf t}_{3}+v+\tau)}$ does not oscillate as a function of $y_2$ ($j$-th derivative is  bounded by $T^\epsilon$).  Thus, we see that $W^{(j)}_{u,v,\tau}(y_2) \ll_j T^\epsilon$. Now, on applying the change of variable $y_2 \rightarrow y_2^2$, we get the following expression of $	\mathrm{I_2}(u, \tau,v)$
				\begin{align}
					&\int_{0}^{\infty} 2y_2 W_{u,v,\tau}\left(y_2^2\right)\,y_{2}^{ - 2i \tau}\, e \left( \pm \frac{2 \sqrt{Nm}y_2}{q}\right) \mathrm{d} y_2, 
				\end{align}
				which we analyse now.  In  further discussion, by  abuse of notation, we will write $W_2(y_2)$ in place of   $2y_2 W_{u,v,\tau}(y_2^2)$,  as we will see that the  analysis is uniform with respect to $u$, $v$ and $\tau$. Thus the $y_2$-integral has  the following form
				\begin{align}\label{y_2 integral}
					\mathrm{I_2}(u, \tau,v)= 	\int_{0}^{\infty} W_2(y_2)\, e \left(- \frac{\tau}{  \pi} \log y_2 \pm \frac{2 \sqrt{Nm}y_2}{q}\right) \mathrm{d} y_2.
				\end{align} 
				Consider the phase function
				\begin{equation} \label{first phase function}
					f(y_2) = - \frac{\tau}{  \pi} \log y_2 \pm \frac{2 \sqrt{ N m}}{q} y_{2}. 
				\end{equation}
				The stationary point  $y_{2,0}$ is given by 
				$$ y_{2, 0} = \frac{\pm\tau q}{ 2 \pi \sqrt{Nm}}.$$
				Note that  $\pm \tau$ has  to  be positive  for the stationary point to lie in the range $[1,2]$,  otherwise the integral will be negligibly small.  Moreover 
				\begin{align}\label{tau size}
					\tau \asymp \sqrt{Nm}/q
				\end{align}
				due to the same reason.
				The second order derivative of the phase function at $y_{2,0}$ is given by  
				\begin{align} \label{K 0 definition}
					f^{\prime \prime} (y_2) = \frac{\tau}{ \pi y_2^2}  \Rightarrow f^{\prime \prime} (y_{2, 0})=  \frac{4\pi N m}{\tau q^2}.
				\end{align}
				On applying the stationary phase expansion (see  Lemma \ref{stationaryphase}), we see that the $y_2$- integral in \eqref{y_2 integral} can be expressed as 
				$$ \frac{W_2(y_{2,0})e\left(f (y_{2, 0})+1/8\right)}{\sqrt{f^{\prime \prime} (y_{2, 0})}} + \text{lower order terms}+O\left(\frac{1}{T^{2020}}\right).$$
				We will  work with the main  term, as the analysis for the other terms is  similar and we get better bounds. Thus the $y_2$-integral essentially looks like
				$$ \frac{W_2(y_{2,0})e\left(f (y_{2, 0})+1/8\right)}{\sqrt{f^{\prime \prime} (y_{2, 0})}} =\frac{W_2(y_{2,0})e\left(1/8\right)q\sqrt{|\tau|}}{\sqrt{4\pi Nm}}\left( \frac{|\tau| q}{ 2 \pi e \sqrt{Nm} }\right)^{-2i \tau}. $$
				On plugging the above expression   in place of $\mathrm{I_2}(u, \tau,v)$ into \eqref{int after c}, we see that  $ \mathcal{J}(m,N_1y,q)$  is  given by
				\begin{align}
					&	\int_{\mathbb{R}} V\left(\frac{v}{K}\right)   \int_{u \ll \frac{T^\epsilon C}{QK}}\, \mathrm{I}_u 
					\sum_{   \mathfrak{c} } \int_{ A_{\mathfrak{c}} }  W_{1}\left( \tau \right) W_2\left(\frac{|\tau| q}{ 2 \pi \sqrt{Nm}}\right)\notag \\ 
					& \times \frac{e\left(1/8\right)q\sqrt{|\tau|}}{\sqrt{4\pi Nm}}\left( \frac{|\tau| q}{ 2 \pi e \sqrt{Nm} }\right)^{-2i \tau}\, \, e \left(h_{1}(\tau)\right)\, \mathrm{d}\tau \, \mathrm{d}u \,\mathrm{d} v.
				\end{align}
				Consider 
				$$ \frac{e\left(1/8\right)q\sqrt{|\tau|}}{\sqrt{4\pi Nm}}= \frac{1}{\sqrt{|\tau|}}\frac{e\left(1/8\right)q{|\tau|}}{\sqrt{4\pi Nm}}.$$
				Note that the last term has size $1$. Thus it can be absorbed into the weight function. Finally, changing the variable $\tau + {\bf t}_{3} \mapsto \eta$, we essentially arrive at 
				\begin{align*}
					&	\int_{\mathbb{R}} V\left(\frac{v}{K}\right)   \int_{u \ll \frac{T^\epsilon C}{QK}}\, \mathrm{I}_u \, 	I(N_1y,m,q)
					\, \mathrm{d}u \,\mathrm{d} v,
				\end{align*}
				where 
				\begin{align}
					I(N_1y,m,q)= \sum_{   \mathfrak{c} } \int_{ A_{\mathfrak{c}} }  W_{3}\left( \frac{|\eta-\bf t_3|}{\sqrt{mN}/q}  \right)  |\eta -{\bf t}_{3}|^{-1/2} \;(N_1y)^{-i(\eta-{\bf t}_3)} e( h(\eta))  \; \mathrm{d} \eta. 
				\end{align}
				Here  $W_3$ is  the new weight function supported on $[1/2,5/2]$ and satisfying $y^jW_3^{(j)}(y) \ll T^{\epsilon}$  and $h(\eta)$ is given by
				\begin{align*}
					&h_{1}(\eta-{\bf t}_3) + \frac{(\eta-{\bf t}_3)}{2\pi} \log \left(N_1y\right)-\frac{(\eta-{\bf t}_3)}{\pi}\log\left(\frac{|\eta-{\bf t}_{3}| q}{ 2 \pi e \sqrt{Nm} }\right) \notag\\
					=&-\frac{(\eta -{\bf t}_3)}{2\pi}+\frac{1}{2\pi}\sum_{j=1}^{3} (\eta -{\bf t}_3 + {\bf t}_{j}) \, \log |\eta -{\bf t}_3 + {\alpha}_{j}|-\frac{(\eta-{\bf t}_3)}{\pi}\log\left({c|\eta-{\bf t}_{3}|}\right),
				\end{align*}
				with
				$$c=c(q,m):=\sqrt{\frac{8\pi^3 N}{q^3r}}\frac{q}{ 2 \pi e \sqrt{Nm} }.$$ Here we used the expression \eqref{h_1} for $h_1(\eta-\alpha_3)$. 
				Hence we have the lemma.
			\end{proof}
				Analysis for  $\mathcal{J}( m^\prime, N_1 y, q^\prime )$ is exactly similar to that of $ \mathcal{J} \left({m, N_1} y, q\right)$. Thus,  on plugging the expression \eqref{simplified j} for $ \mathcal{J} \left({m, N_1} y, q\right)$ and a corresponding expression of $\mathcal{J}( m^\prime, N_1 y, q^\prime )$ into \eqref{integral after cauchy},  we get 
				\begin{align}\label{final I}
					\mathcal{I}(...)=\int_{\mathbb{R}}\int_{\mathbb{R}} V\left(\frac{v}{K}\right)  V\left(\frac{v^\prime}{K}\right) \int_{u \ll \frac{T^\epsilon C}{QK}} \int_{u^\prime \ll \frac{T^\epsilon C}{QK}} \mathrm{I}_u  \overline{\mathrm{I}_u^\prime}\, \mathfrak{J}\,  \mathrm{d}u^\prime \, \mathrm{d}u \, \mathrm{d} v^\prime \,  \mathrm{d} v,
				\end{align}
				where 
				\begin{align} \label{J after simple}
					\mathfrak{J}=\int_{\mathbb{R}} V_1(y)  {I} \left({N_1} y, q,m\right) \; \overline{I} \left({N_1} y, q^\prime,m^\prime\right)  e \left(\frac{-n_{2} {N_1} y}{n_{1} q_{1} q_{2} q_{2}^{\prime}r}\right) \;  \; \mathrm{d}y.
				\end{align}
				Thus it boils down to analyse $\mathfrak{J}$ only, as the other integrals will be estimated trivially. 
				
				For the zero frequency, i.e., $n_2=0$, we have the following estimate. 
				\begin{lemma} \label{secondderibound}
					Let $\mathfrak{J}$ be as above. Let $N_0= T^\epsilon N/(CQ)$. 	Then  we have
					\begin{align}\label{zerointegralbound}
						\mathfrak{J} \ll  \frac{T^\epsilon N_0}{\max\{N_0,T\}} \ll  \frac{T^\epsilon N_0}{T}  .
					\end{align}
					Moreover, for $n_2=0$ and $q=q^\prime$, $\mathfrak{J}$ is negligibly small unless
					\begin{equation} \label{diffmm}
						m^{\prime}  - m  \ll {M_{1}} T^{\epsilon}/N_0,
					\end{equation}
					where $m,\, m^\prime \sim M_1$.
				\end{lemma}
				\begin{proof}
					Let's consider  $I(N_1y,m,q)$ which is given by 
					\begin{align}\label{I after cauchy}
						\sum_{   \mathfrak{c} } \int_{ A_{\mathfrak{c}} }  W_{3}\left( \frac{|\eta-{\bf t}_3|}{\sqrt{mN}/q}  \right)  |\eta -{\bf t}_{3}|^{-1/2} \;(N_1y)^{-i(\eta-{\bf t}_3)} e( h(\eta))  \; \mathrm{d} \eta,
					\end{align}
					Here $A_{\mathfrak{c}}$  is the set of all $\eta_1 \in \mathbb{R}$ such that 
					\begin{align*}
						&	\mathfrak{c} \leq |\eta_1| \leq \mathfrak{c}(1+1/10),\,   T^\epsilon \leq  |\eta_1-{\bf t}_3+{\bf t}_1| \ll (\mathfrak{c} +T)T^\epsilon, \notag  \\
						&T^\epsilon \leq  |\eta_1-{\bf t}_3+{\bf t}_2| \ll \mathfrak{c}^{1+\epsilon},
					\end{align*}
					and $\mathfrak{c} \asymp N_0$ and   the phase function  is given by  
					\begin{align*}
						h_3(\eta)&=h(\eta) -\frac{\eta-{\bf t}_3}{2\pi}\log(N_1y) \\
						&=-\frac{(\eta -{\bf t}_3)}{2\pi}+\frac{1}{2\pi}\sum_{j=1}^{3} (\eta -{\bf t}_3 + {\bf t}_{j}) \, \log |\eta -{\bf t}_3 + {\bf t}_{j}| \\
						&\hspace{1cm}-\frac{(\eta-{\bf t}_3)}{\pi}\log\left({c_1(y)|\eta-{\bf t}_{3}|}\right),
					\end{align*}
					where $$c_1(y)=\sqrt{\frac{8\pi^3{N_1}y N}{q^3r}}\frac{q}{ 2 \pi e \sqrt{Nm} }.$$
					The second order derivative of the phase function is given by 
					\begin{align}\label{second deri}
						h_3^{\prime \prime}(\eta)=-\frac{1}{\pi (\eta -{\bf t}_3)}+\frac{1}{2\pi} \sum_{j=1}^{3}\frac{1}{\eta -{\bf t}_3+{
								\bf t}_j}.
					\end{align}
					We estimate it in two cases. 
					
					\noindent
					{\bf Case 1}: $\mathfrak{c} \asymp  T $. 
					
					As $\eta \asymp \mathfrak{c}$, we observe that 
					$$ |\eta-{\bf t}_{3}| \ll T,\, |\eta-{\bf t}_{3}+{\bf t}_1| \ll T,\,  |\eta-{\bf t}_{3}+{\bf t}_2| \asymp  T, \,  \eta \asymp  T. $$
					We also recall from \eqref{tau size} that $\eta-{\bf t}_{3} \asymp \sqrt{Nm}/q$, as $\tau=\eta-{\bf t}_3$.

					Let $ |\eta-{\bf t}_{3}| \ll T^{1-\epsilon}$, then we see that $ |\eta-{\bf t}_{3}+{\bf t}_1| \asymp T$. Thus the first term dominates the rest of the terms  in \eqref{second deri}. Hence $h^{''}(\eta)\gg q/\sqrt{Nm}$. On applying the second derivative bound , we get $I(N_1y,m,q)\ll T^\epsilon$. 
					The case $ |\eta-\alpha_{3}| \asymp T$ needs to be treated differently, as 
					$h_3^{''}(\eta)$ may vanish in this situation. 
					
					\noindent
					{\bf Case 2}: $N_0 \not\asymp  T$. 
					
					Using \eqref{J after simple}, we see that
					\begin{align} 
						|\mathfrak{J}|\leq \int_{\mathbb{R}} V_1(y)\,   |{I} ({N_1} y, q,m)|^2 \; \mathrm{d}y.
					\end{align}
					Opening the absolute value square, and changing $\eta\rightarrow \mathfrak{c}\eta$,  $\eta^\prime \rightarrow \mathfrak{c}^\prime \eta^\prime$ we get
					\begin{align*}
						|\mathfrak{J} |\leq \sup_{ \mathfrak{c}, \mathfrak{c}^\prime}  \mathfrak{c} \mathfrak{c}^\prime \int \int |F(\mathfrak{c}\eta)| \, |F(\mathfrak{c}^\prime \eta ^{\prime})| \,\left| \int_{\mathbb{R}} V_1(y) \, y^{-i(\mathfrak{c}\eta - \mathfrak{c}^\prime\eta^{\prime})} \, \mathrm{d}y\right| \, \mathrm{d} \eta \, \mathrm{d} \eta^{\prime}.
					\end{align*}
					where $F(\eta)=W_{3}\left( \frac{|\eta-{\bf t}_3|}{\sqrt{mN}/q}  \right) |\eta -{\bf t}_{3}|^{-1/2}$, and $\mathfrak{c} \asymp \mathfrak{c}^\prime \asymp N_0$.  On applying  the change of variable $\eta^\prime \rightarrow \mathfrak{c}\eta^\prime/\mathfrak{c}^\prime$ followed by   integration by parts repeatedly,  we observe that the $y$-integral is negligibly small unless  $\vert \eta - \eta^{\prime}\vert \ll T^{\epsilon}/N_0.$ Now changing the variable $\eta^{\prime} = \eta + \eta_1$ with $\eta_1 \ll T^{\epsilon}/N_0$, we get   $$	|\mathfrak{J}| \leq \frac{T^\epsilon N_0}{\max\{N_0,T\}},$$  
					where we used $|N_0\eta-{\bf t}_{3}|\asymp \max\{ N_0,T\}$.

					Now we prove the second part.  Consider $\mathfrak{J}$, which is given by
					\begin{align*}
						\sum_{   \mathfrak{c}} \int_{ A_{\mathfrak{c}} }  \int_{ A_{\mathfrak{c}} }  F(\eta) \, F(\eta ^{\prime}) \, {N_1}^{-i(\eta-\eta^{\prime})} \, e{( h(\eta)- h(\eta^{\prime}))} \, \int_{\mathbb{R}} V_1(y) \, y^{-i(\eta - \eta^{\prime})} \, \mathrm{d}y \, \mathrm{d} \eta \, \mathrm{d} \eta^{\prime}.
					\end{align*}
					Note that $\mathfrak{c}=\mathfrak{c^\prime}$, as $q=q^\prime$.  On  changing $\eta\rightarrow \mathfrak{c}z$,  $\eta^\prime  \rightarrow \mathfrak{c} z^\prime$ with $1 \leq  z,\, z^\prime \leq 11/10$ and proceeding as before,   we observe that the $y$-integral is negligibly small unless 
					$$\vert z - z^{\prime}\vert \ll T^{\epsilon}/N_0.$$ 
					In this case, writing  $z^{\prime} = z + z_1$ with $z_1 \ll T^{\epsilon}/N_0$, we see that   $	\mathfrak{J}$  is  transformed into 
					$$  \sum_{   \mathfrak{c} } \mathfrak{c}^2  \int \int  V_1(y) \, \left({N_1} y \right)^{i \mathfrak{c}z_1} I_4(z_1,y) \, \mathrm{d} y \, \mathrm{d} z_1,$$
					where
					$$ I_4(z_1,y)=\int \,  F(\mathfrak{c}z) F(\mathfrak{c}(z+ z_1)) \,  e( h(\mathfrak{c}z) - h(\mathfrak{c}(z +z_1) )) \,\mathrm{d} z.$$
					%
					%
					To have  a stationary point for this integral, we must  have
					$$h^{\prime}({\mathfrak{c}z}) - h^{\prime}(\mathfrak{c}(z + z_1) ) =0.$$ 
					Evaluating the first order derivative, we get 
					\begin{align*}
						&\frac{ \mathfrak{c}}{2\pi}  \sum_{j=1}^{3} \lbrace \log {| {\bf t}_j-{\bf t}_3+ \mathfrak{c}z|} - \log {|{\bf t}_{j}-{\bf t}_{3}+ \mathfrak{c}(z + z_1) |} \rbrace \\
						& -\frac{ \mathfrak{c}}{\pi} \left\{ \log {| \mathfrak{c}z-{\bf t}_{3}|} - \log {| \mathfrak{c}z-{\bf t}_{3}+ \mathfrak{c} z_1|} \right\} - \frac{ \mathfrak{c}}{2\pi} \log\left({\frac{m^{\prime} }{m }} \right) =0
					\end{align*}
					Note that we are in the situation  where $| {\bf t}_j-{\bf t}_3+ \mathfrak{c}z| \geq T^{2\epsilon}$ for all $j$  and $| \mathfrak{c}z-{\bf t}_{3}|\geq T^{2\epsilon}$ (see \eqref{A11}). 
					On analysing it further, we arrive at 
					\begin{align*}
						- \mathfrak{c}  \sum_{j=1}^{3}  \log \left| 1+\frac{ \mathfrak{c} z_1}{ {\bf t}_{j}-{\bf t}_{3}+ \mathfrak{c} z} \right|   +2 \mathfrak{c}  \log \left| 1+\frac{ \mathfrak{c}z_1}{ \mathfrak{c}z-{\bf t}_{3}}\right|  - { \mathfrak{c}} \log\left({\frac{m^{\prime} }{m }} \right) =0
					\end{align*}
					On expanding it we arrive at
					\begin{align*}
						&	- \frac{ \mathfrak{c}^2 z_1 }{{\bf t}_{1}-{\bf t}_{3}+ \mathfrak{c}z}-  \frac{ \mathfrak{c}^2 z_1 }{{\bf t }_{2}-{\bf t}_{3}+ \mathfrak{c}z} - \frac{ \mathfrak{c}z_1}{z} \\
						&+ \frac{ 2 \mathfrak{c}^2 z_1 }{ \mathfrak{c}z - {\bf t}_{3}}  -  \mathfrak{c} \log\left({\frac{m^{\prime} }{m }} \right) + \text{lower  order  terms} =0
					\end{align*} 
					On comparing the sizes,  we get 
					$$ \mathfrak{c} \log\left({\frac{m^{\prime} }{m }} \right) \ll T^{\epsilon},$$
					from which we see that
					\begin{equation*} 
						m^{\prime}  - m  \ll \frac{M_{1}}{N_{0}} T^{\epsilon},
					\end{equation*}
					which follows  by expanding $\log (m^\prime/m)$. Hence the lemma follows. 
				\end{proof}
				For $n_2 \neq 0$, and generic $q$,  we need better estimates for $\mathfrak{J}$, which we prove  in  the following lemma.
				Recall that 
				\begin{align} 
					\mathfrak{J}=\int_{\mathbb{R}} V_1(y)  {I} \left({N_1} y, q,m\right) \; \overline{I} \left({N_1} y, q^\prime,m^\prime\right)  e \left(\frac{-n_{2} {N_1} y}{n_{1} q_{1} q_{2} q_{2}^{\prime}r}\right) \;  \; \mathrm{d}y.
				\end{align}
				\begin{lemma} \label{integralbound}
					$\mathfrak{J}$ is negligibly small unless 
					\begin{equation} \label{n2value}
						n_{2} \ll \frac{n_{1} C^{2} N_{0} r}{q_{1} {N_1}} \, T^{\epsilon} =: N_{2},
					\end{equation} 
					in  which  case,  for  $q \sim C \gg T^\epsilon QK/T$, we have 
					$$\mathfrak{J}  \ll \sqrt{\frac{n_{1} q_{1}q_{2}q_{2}^{\prime}r}{n_{2} {N_1}}} \, \frac{N_{0}}{T}.$$
				\end{lemma}
				
				\begin{proof}
					Recall that 
					\begin{align*} 
						\mathfrak{J}=\int_{\mathbb{R}} V_1(y)  {I} \left({N_1} y, q,m\right) \; \overline{I} \left({N_1} y, q^\prime,m^\prime\right)  e \left(\frac{-n_{2} {N_1} y}{n_{1} q_{1} q_{2} q_{2}^{\prime}r}\right) \;  \; \mathrm{d}y,
					\end{align*}
					where ${I} \left({N_1} y, q,m\right) $ is given by 
					\begin{align*}
						\sum_{   \mathfrak{c} } \int_{ A_{\mathfrak{c}} } W_{3}\left( \frac{|\eta-{\bf t}_3|}{\sqrt{mN}/q}  \right)  |\eta -{\bf t}_{3}|^{-1/2} \;(N_1y)^{-i(\eta-{\bf t}_3)} e( h(\eta))  \; \mathrm{d} \eta,
					\end{align*}
					and 
					\begin{align*}
						h(\eta)&=-\frac{(\eta -{\bf t}_3)}{2\pi}+\frac{1}{2\pi}\sum_{j=1}^{3} (\eta -{\bf t}_3 + {\bf t}_{j}) \, \log |\eta -{\bf t}_3 + {\bf t}_{j}| \\
						&\hspace{1cm}-\frac{(\eta-{\bf t}_3)}{\pi}\log\left({c|\eta-{\bf t}_{3}|}\right),
					\end{align*}
					with  
					\begin{align*}
						c=c(q,m):=\sqrt{\frac{8\pi^3 N}{q^3r}}\frac{q}{ 2 \pi e \sqrt{Nm} }.
					\end{align*}
					Consider the $y$-integral 
					$$ \mathrm{I}_3(\eta,\eta^\prime) :=\int_{\mathbb{R}} V_1(y) \, y^{-i(\eta - \eta^{\prime})} \, e\left(-\frac{n_{2} {N_1} y}{n_{1} q_{1} q_{2} q_{2}^{\prime}r}\right) \, \mathrm{d}y.$$
					
					By using  integration by parts $j$-times, we observe that
					$$ \mathrm{I}_3(\eta,\eta^\prime) \ll_j \left(|\eta-\eta^\prime|\frac{n_1q_1q_2q_2^\prime r}{|n_2|N_1} \right)^{j} \ll  \left(\frac{N_0n_1q_1q_2q_2^\prime r}{|n_2|N_1} \right)^{j},$$
					as $|\eta-\eta^\prime| \ll N_0$. 
					Thus  $ \mathrm{I}_3(\eta,\eta^\prime)$ and hence   $\mathfrak{J}$ is negligibly small unless 
					\begin{equation*} \label{n2value12}
						n_{2} \ll \frac{n_{1} C^{2} N_{0} r}{q_{1} {N_1}}  T^{\epsilon} =: N_{2}, 
					\end{equation*} 
					In this  range,  we analyze  $\mathfrak{J}$ using  the  stationary phase method. 
					On  plugging the expressions of   
					${I}\left({N_1} y, q,m\right)$ and   $\overline{I} \left({N_1} y, q^\prime,m^\prime\right) $ into $\mathfrak{J}$, we get 
					\begin{align}\label{i4}
						&  \sum_{   \mathfrak{c}, \mathfrak{c}^\prime} \int_{ A_{\mathfrak{c}} }  \int_{ A_{\mathfrak{c}^\prime} }   F(\eta)F(\eta^\prime) {N_1}^{-i(\eta - \eta^{\prime})}  e({\left(h(\eta)-h(\eta^{\prime})\right)}\,  \mathrm{I}_3(\eta, \eta^\prime)\, 
						\mathrm{d} \eta \, \mathrm{d} \eta^{\prime},
					\end{align}
					where $F(\eta)= W_{3}\left( \frac{|\eta-{\bf t}_3|}{\sqrt{mN}/q}  \right)  \left(\eta - {\bf t}_{3}\right)^{-1/2}$, $F(\eta^\prime)= W_{3}\left( \frac{|\eta^\prime-{\bf t}_3|}{\sqrt{mN}/q}  \right) \, \left(\eta^\prime - {\bf t}_{3}\right)^{-1/2}$ and 
					$$ \mathrm{I}_3(\eta, \eta^\prime)=	\int_{\mathbb{R}} y^{-1/2} V_2(y) \, y^{-i(\eta - \eta^{\prime})} \, e\left(-\frac{n_{2} {N_1} y}{n_{1} q_{1} q_{2} q_{2}^{\prime}r}\right) \, \mathrm{d}y,$$
					with $V_2(y)=y^{1/2} V_1(y)$.
					Let's assume  that $\eta >0$ and $\eta^\prime >0$.  The other cases can be dealt similarly.  Furthermore, we can assume that $|\eta-\eta^\prime|\gg T^\epsilon$ (otherwise there are no oscillations in $y^{-i(\eta-\eta^\prime)}$, so it can be taken  as  a weight function and consequently we save the extra generic  size, $N_0$, of $|\eta-\eta^\prime|$  while estimating  the $n_2$-length).
					The phase function  of  $ \mathrm{I}_3(\eta, \eta^\prime)$ is given by 
					$$P(y): = \, -\frac{\eta - \eta^{\prime}}{2 \pi} \,  \log y-\frac{n_{2} {N_1} y}{n_{1} q_{1} q_{2} q_{2}^{\prime}r}.$$
					The stationary point $y_{0}$  is given by  
					$$y_{0} = \frac{(\eta^{\prime}-\eta)}{2 \pi} \frac{n_{1} q_{1}q_{2}q_{2}^{\prime}r}{n_{2} {N_1}}:=(\eta^\prime -\eta) Y,$$
					where we   assume  that $\eta^{\prime} \geq \eta$, as the other case can be analysed similarly.  On estimating the second  order derivative, we get 
					$$P^{\prime \prime}(y_{0}) = \frac{(\eta - \eta^{\prime})}{2 \pi y_{0}^{2}} = -\frac{n_{2} {N_1}}{n_{1} q_{1}q_{2}q_{2}^{\prime}r} \, \frac{1}{y_{0}}.$$
					Hence, by the stationary phase method, i.e.,  Lemma \ref{stationaryphase} ( with $P^{(j)}(y) \asymp |\eta-\eta^\prime |\gg T^\epsilon$ for $j\geq 2$),   $ \mathrm{I}_3(\eta, \eta^\prime)$ is given by 
					\begin{align*}
						\frac{e\left(P(y_{0})\right)}{\sqrt{\vert P^{\prime \prime}(y_{0})\vert}} \Big\{\frac{V_2(y_{0})}{\sqrt{y_{0}}} + \text{lower order terms} \Big\}.
					\end{align*}
					We  work with the leading term, and thus $\mathrm{I}_3(\eta, \eta^\prime)$ is essentially given by
					\begin{align}\label{I3}
						& \sqrt{\frac{n_{1} q_{1}q_{2}q_{2}^{\prime}r}{n_{2} {N_1}}} \, \, V_2(y_{0}) \, e\left(P(y_{0})\right)  \\
						& =\sqrt{2 \pi Y} \, \, V_2(Y(\eta^{\prime} - \eta)) \, e^{i\left(\eta^{\prime} -\eta \right) \log \left(Y(\eta^{\prime}-\eta)\right) + i (\eta - \eta^{\prime}) }.\notag
					\end{align}
					Now we estimate the $\eta$ and $\eta^\prime$-integrals. On plugging the expression \eqref{I3} of $\mathrm{I}_3(\eta, \eta^\prime)$ into \eqref{i4}, we arrive at  the following expression of $\mathfrak{J}$
					\begin{align*}
						& \sqrt{2 \pi Y} \, \sum_{   \mathfrak{c}, \mathfrak{c}^\prime} \int_{ A_{\mathfrak{c}} }  \int_{ A_{\mathfrak{c}^\prime} } \, V_2(Y(\eta^{\prime} - \eta)) F(\eta)F(\eta^\prime) \, e^{iH_1(\eta,\eta^\prime)}  \mathrm{d} \eta \, \mathrm{d} \eta^{\prime},
					\end{align*}
					where $$ H_1(\eta,\eta^\prime)= 2\pi (h(\eta)-h(\eta^{\prime}))+ (\eta^{\prime}- \eta) \log {{N_1}}+ \left(\eta^{\prime} -\eta \right) \log \left(Y(\eta^{\prime}-\eta)\right) +  (\eta - \eta^{\prime}).$$
					Using the Fourier inversion, we write 
					$$V_2(Y(\eta^{\prime}- \eta)) = \int_{\mathbb{R} } \widehat{V_2}(s) \, e\left(Y(\eta^{\prime} - \eta) s \right) \, \mathrm{d}s.$$
					Note that $\widehat{V_2}(s) \ll (1+|s|)^{-2021}$. Thus we see that $\mathfrak{J}$ is given by
					\begin{align}\label{eta}
						\sqrt{2\pi Y} \, \int_{-T^{\epsilon/2}}^{T^{\epsilon/2}} \, \widehat{V_2}(s)  \sum_{   \mathfrak{c}, \mathfrak{c}^\prime} \int_{ A_{\mathfrak{c}} }  \int_{ A_{\mathfrak{c}^\prime} } \,F(\eta)F(\eta^\prime) \, e^{i H(\eta, \, \eta^{\prime},\, s)}\, \mathrm{d} \eta \, \mathrm{d} \eta^{\prime} \, \mathrm{d}s,
					\end{align}
					where $H\left(\eta, \eta^{\prime}, s\right)$ is given by
					\begin{align*}
						& \sum_{j=1}^{3} \; \left({\bf t}_{j}-{\bf t}_{3}+\eta\right) \; \log |{\bf t}_{j}-{\bf t}_{3}+\eta | -2{(\eta-{\bf t}_3)}\log\left({c|\eta-{\bf t}_{3}|}\right) \\
						&  -\sum_{j=1}^{3} \; \left({\bf t}_{j}-{\bf t}_{3}+\eta^\prime \right) \; \log |{\bf t}_{j}-{\bf t}_{3}+\eta^\prime|+2{(\eta^\prime-{\bf t}_3)}\log\left({c^\prime|\eta^\prime-{\bf t}_{3}|}\right) \\
						&  + {2\pi s Y (\eta^{\prime}- \eta) } + \left(\eta^{\prime} -\eta \right) \log \left({N_1}Y(\eta^{\prime}-\eta)\right) .
					\end{align*}
					On 	computing the  partial derivatives, we see that  $ \frac{\partial}{\partial \eta} H(\eta, \eta^{\prime},s)$ is given by
					$$\sum_{j=1}^3\log |{\alpha}_{j}-{\bf t}_{3}+\eta |-2\log(c|\eta-{\bf t}_{3}|) -2\pi sY-\log(N_1Y(\eta^\prime-\eta)),$$
					and  $ \frac{\partial}{\partial \eta^\prime} H(\eta, \eta^{\prime},s)$ is given by
					$$-\sum_{j=1}^3\log |{\bf t}_{j}-{\bf t}_{3}+\eta^\prime |+2\log(c^\prime|\eta^\prime-{\bf t}_{3}|) +2\pi sY+\log(N_1Y(\eta^\prime-\eta)).$$
					The second order partial derivatives are given by 
					\begin{align}\label{eta2}
						\frac{\partial^2}{\partial \eta^2} H(\eta, \eta^{\prime},s) = -  \frac{2}{\eta - {\bf t}_{3}} + \frac{1}{{\bf t}_{1}-{\bf t}_{3}+ \eta} + \frac{1}{{\bf t}_{2}-{\bf t}_{3}+ \eta} + \frac{1}{\eta}+ \frac{1}{\eta^{\prime}-\eta},
					\end{align}
					\begin{align}\label{eta2p}
						\frac{\partial^2}{\partial {\eta^{\prime}}^2}H(\eta, \eta^{\prime},s) =   \frac{2}{\eta^{\prime} - {\bf t}_{3}} - \frac{1}{{\bf t}_{1}-{\bf t}_{3}+ \eta^{\prime}} - \frac{1}{{\bf t}_{2}-{\bf t}_{3}+ \eta^{\prime}} - \frac{1}{\eta^{\prime}} -\frac{1}{\eta^{\prime}-\eta},
					\end{align}
					and 
					\begin{align}\label{etaetap}
						\frac{\partial^2}{\partial \eta \partial \eta^\prime} H(\eta, \eta^{\prime},s)=\frac{\partial^2}{\partial \eta^{\prime} \partial \eta} H(\eta, \eta^{\prime},s) = - \frac{1}{\eta^{\prime} - \eta}.
					\end{align}
					Since $C \gg T^\epsilon QK/T$, we have  $\mathfrak{c} \asymp \mathfrak{c}^\prime \asymp N_{0} \ll T^{1-\epsilon}$.  Let's first assume that 
					$\mathfrak{c} \asymp \mathfrak{c}^\prime \gg K^{1+\epsilon}$. Thus it follows that 
					$$ \eta-{\bf t}_3 \asymp  \eta^\prime-{\bf t}_3\asymp  \eta-{\bf t}_3+{\bf t}_1 \asymp  \eta^\prime-{\bf t}_3+{\bf t}_1  \asymp T.$$
					Let $L\leq \eta^\prime-\eta <2L$, where $T^\epsilon\leq L\ll N_0$. Note that 
					$$ \frac{1}{{\bf t}_{2}-{\bf t}_{3}+ \eta} + \frac{1}{\eta}+ \frac{1}{\eta^{\prime}-\eta} = \frac{1}{{\bf t}_{2}-{\bf t}_{3}+ \eta} + \frac{2}{\eta}+ \frac{1}{\eta^{\prime}-\eta}+O\left( \frac {N_0^{-\epsilon}}{\eta}\right) \asymp \frac{1}{L}.$$
					Thus  $	\frac{\partial^2}{\partial \eta^2} H(\eta, \eta^{\prime},s) \asymp \frac{1}{L}.$ Similarly,  $	\frac{\partial^2}{\partial \eta^\prime2} H(\eta, \eta^{\prime},s) \asymp \frac{1}{L}$. We also note that 
					\begin{align*}
						\frac{\partial^2}{\partial \eta^2} H(\eta, \eta^{\prime},s) \,\frac{\partial^2}{\partial {\eta^{\prime}}^2} H(\eta, \eta^{\prime},s) - \left(\frac{\partial^2}{\partial \eta^{\prime} \partial \eta} H(\eta, \eta^{\prime},s)\right)^{2} \gg \frac{1}{L^2}.
					\end{align*}
					Thus we apply the two dimensional exponential bound,  Corollary \ref{twodimexpointe}, to  get 
					$$ \mathfrak{J} \ll \sqrt{\frac{n_{1} q_{1}q_{2}q_{2}^{\prime}r}{n_{2} {N_1}}} \, \frac{N_{0}}{T},$$ 
					where the factor $T$ comes from the variation, as $\eta-{\bf t}_3 \asymp T$.  The other case when $N_0 \asymp K$ can be dealt with similarly. Hence we have the lemma.
					%
					
					%
				\end{proof}
				We conclude this section by giving  final estimates for $\mathcal{I}(...)$. 
				\begin{lemma}\label{final int}
					let $\mathcal{I}(...)$ be the integral transform given in \ref{final I}. Then we have 
					\begin{align}\label{final zero}
						\mathcal{I}(...) \ll T^\epsilon\frac{C^2}{Q^2} \frac{N}{CQT}.
					\end{align}
					Moreover, for $C \gg T^\epsilon QK/T$ and $n_2\neq 0$, we have 
					\begin{align}\label{final nonzero}
						\mathcal{I}(...) \ll T^\epsilon\frac{C^2}{Q^2}   \sqrt{\frac{n_{1} q_{1}q_{2}q_{2}^{\prime}r}{n_{2} {N_1}}}\frac{N}{CQT}.
					\end{align}
				\end{lemma}
				\begin{proof}
					Recall that 
					\begin{align*}
						\mathcal{I}(...)=\int_{\mathbb{R}}\int_{\mathbb{R}} V\left(\frac{v}{K}\right)  V\left(\frac{v^\prime}{K}\right) \int_{u \ll \frac{T^\epsilon C}{QK}} \int_{u^\prime \ll \frac{T^\epsilon C}{QK}} \mathrm{I}_u  \overline{\mathrm{I}_u^\prime}\, \mathfrak{J}\,  \mathrm{d}u^\prime \, \mathrm{d}u \, \mathrm{d} v^\prime \,  \mathrm{d} v.
					\end{align*}
					We estimate the  $u$-integral as 
					\begin{align*}
						\int_{u \ll \frac{T^\epsilon C}{QK }}\,|\mathrm{I}_u|\,  \mathrm{d}u  \ll \int_{u \ll \frac{T^\epsilon C}{QK}}  \,\int_{\mathbb{R}}W(x/Q^{\epsilon})|g(q,x)|\mathrm{d}x\,   \mathrm{d}u \ll  \frac{T^\epsilon C}{Q K}Q^\epsilon ,
					\end{align*}
					where we used properties  (see \eqref{g properties}) of $g(q,x)$. The same bound holds for the $u^\prime$-integral as well. Finally, on using the above bound,  bounds for $\mathfrak{J}$ from Lemma  \ref{secondderibound} and Lemma \ref{integralbound}, and estimating the $v$ and $v^\prime$-integral trivially we get the lemma. 
				\end{proof}

				\section{Estimates for the zero frequency: $n_{2}=0$}
				In this section  we will  estimate $\Omega$  given in \eqref{omegavalue} further and  use it to get bounds for $S_{r}(N)$ for $n_{2}=0$. Let $\Omega_{0}$ denotes the contribution of the zero frequency $n_{2}=0$ to $\Omega$ in \eqref{omegavalue}. Then we have the following lemma.

				\begin{lemma} \label{omega0bound}
					For $n_2=0$, we have
					$$\Omega_{0}  \ll  \frac{\widetilde{N} N_{0} M_{1}^{1/2} C^5 r}{Q^2n_{1}^3 q_{1} T} T^{\epsilon},$$
				\end{lemma}
				
				\begin{proof}
					Recall that 
					\begin{align*} 
						\Omega_0 \ll   \sup_{N_1 \ll \widetilde{N}} \frac{{N_1} T^{\epsilon}}{n_1^2M_{1}^{1/2} } \; & \mathop{\sum \sum}_{q_{2},\,  q_{2}^{\prime} \sim {C}/{q_{1}}}  \; \mathop{\sum \sum}_{m,\, m^{\prime} \sim M_{1}}   \left| \mathfrak{C}(...)\right| \; \left|\mathcal{I}(...)\right|,
					\end{align*} 
					where 
					$$\mathfrak{C}(...)= \mathop{\sum \sum}_{\substack{d |q \\ d^{\prime} | q^{\prime}}} d d^{\prime} \mu \left(\frac{q}{d}\right) \mu \left(\frac{q^{\prime}}{d^{\prime}}\right) \; \mathop{\sideset{}{^\star}{\sum}_{\substack{\beta \;\mathrm{mod} \; \frac{q_{1}q_{2}r}{n_{1}} \\  n_{1} \beta \;  \equiv \; - m \; \mathrm{mod} \;  d}} \, \sideset{}{^\star}{\sum}_{\substack{\beta^{\prime} \; \mathrm{mod} \; \frac{q_{1}q_{2}^{\prime}r}{n_{1}} \\  n_{1}   \beta^{\prime} \;  \equiv \; - m^{\prime} \; \mathrm{mod} \;  d^{\prime}}}}_{\overline{\beta} q_{2}^{\prime} - \overline{\beta^{\prime}} q_{2} + n_{2} \; \equiv \; 0 \; \left(\frac{q_{1} q_{2} q_{2}^{\prime}r}{n_{1}} \right)} \; 1.$$
					Note that the congruence equation 
					$$\overline{\beta} q_{2}^{\prime} - \overline{\beta^{\prime}} q_{2} +n_2 \; \equiv \; 0 \; \left(\frac{q_{1} q_{2} q_{2}^{\prime}r}{n_{1}} \right)$$
					gives  $q_{2}=q_{2}^{\prime}$ and $\beta = \beta^{\prime}$ as $n_{2}=0$.
					Therefore the character sum is bounded by
					\begin{align} \label{zerochar}
						\mathfrak{C}(...) \leq 	\mathop{\sum \sum}_{d,d^\prime \vert q} d d^\prime \sideset{}{^\star}{\sum}_{\substack{\beta \; \mathrm{mod} \; \frac{q_{1}q_{2}r}{n_{1}} \\  n_{1} \beta \; \equiv \; -m \; (d) \\  n_{1} \beta \; \equiv \; -m^\prime (d^\prime)}} 1 & \leq  \mathop{\sum \sum}_{\substack{d,d^\prime |q \\ (d,d^\prime) \vert m-m^\prime}} d d^\prime \frac{ qr}{ \,n_1 [d,d^\prime]}(n_1,m) \notag \\
						& = \frac{qr}{n_1} \mathop{\sum \sum}_{\substack{d,d^\prime |q \\ (d,d^\prime) \vert m-m^\prime}} \; (d,d^\prime)(n_1,m),
					\end{align}
					where $[d,d^\prime] $ and $(d,d^\prime)$ denote the least common multiple and greatest common divisor of $d$ and $d^\prime$ respectively. Now on substituting  the above  bound for the character sum $\mathfrak{C}(...)$,  the bound  \eqref{final zero} for the integral $\mathcal{I}(...)$ from Lemma \ref{final int},  and the restriction \eqref{diffmm} on the difference of $m$  and $m^{\prime}$,  into  $\Omega_0$ we  obtain 
					$$\Omega_{0} \leq \,  \frac{ T^\epsilon \widetilde{N} N_{0} C^2}{Q^2M_{1}^{1/2} n_{1}^3 T} \;  \mathop{\sum }_{q_{2} \sim {C}/{q_{1}}} {q r} \, \mathop{\sum \sum}_{d,d^{\prime} \mid q} \, (d,d^{\prime}) \; \mathop{ \mathop{\sum \sum}_{m,m^{\prime} \sim M_{1}}}_{\substack{ m^{\prime}  - m  \ll {T^\epsilon M_{1}}/{N_{0}}   \\ (d,d^{\prime}) \mid m-m^{\prime}}} \, (n_1,m).$$
					We observe that 
					\begin{align*}
						\mathop{\mathop{\sum \sum}_{m,m^{\prime} \sim M_{1}}}_{\substack{m^{\prime}  - m   \ll \frac{M_{1}}{N_{0}} T^{\epsilon} \\ (d,d^{\prime}) \mid m-m^{\prime}}} \, (n_1,m) \, & \ll \, M_{1} + \frac{M_{1}^2}{N_{0} \, (d,d^{\prime})} T^{\epsilon}.
					\end{align*} 
					Plugging this into the above expression of $\Omega_{0}$, we arrive at
					\begin{align*}
						\Omega_{0} & \ll  \frac{\widetilde{N}  r N_{0} C^2}{Q^2M_{1}^{1/2} n_{1}^3 T}  \sum_{q_{2} \sim {C}/{q_{1}}} q \mathop{\sum \sum}_{d,d^{\prime} \mid q} (d,d^{\prime})    \left(M_{1} + \frac{M_{1}^2}{N_{0} \, (d,d^{\prime})} \right) T^{\epsilon} \\
						& \ll  \frac{\widetilde{N}  r  C^2}{Q^2 M_{1}^{1/2} n_{1}^3 T} \left(\frac{N_0 M_{1}C^{3}}{q_{1}}+\frac{M_{1}^2 C^2}{ q_{1}}\right) T^{\epsilon}\\
						& \ll \frac{\widetilde{N}  r  C^2}{Q^2  n_{1}^3 T} \left( \frac{N_0 M_{0}^{1/2}C^{3}}{q_{1}}+\frac{M_{0}^{3/2} C^2}{ q_{1}}\right) T^{\epsilon}\\
						&\ll  \frac{\widetilde{N} N_{0} M_{1}^{1/2} C^5 r}{Q^2n_{1}^3 q_{1} T} T^{\epsilon}.
					\end{align*}
					In the last step we used  $$ \frac{M_{0}^{3/2} C^2}{N_{0} q_{1}} \ll \frac{M_{0}^{1/2}C^{3}}{q_{1}} \iff {M_1Q} \ll N \iff \frac{T^2Q^2}{N}Q \ll N \iff \frac{T^4}{K^3} \ll N,$$
					The last inequality is justified  with our choice of $K$. Hence
					the lemma follows.
				\end{proof}
				
				\subsection{Estimates for $S_{r}(N)$ in the zero frequency case}
				Let $S_{r,0}(N)$   denotes  the contribution of $\Omega_{0}$ to   $S_{r}(N)$.  Then we have the following lemma.
				\begin{lemma}\label{zerofrequencybound}
					Let $S_{r,0}(N)$ be defined as above. Then we have
					\begin{equation} 
						S_{r,0}(N) \ll  r^{1/2} N^{3/4} T^{1/2} K^{1/4} T^{\epsilon}.
					\end{equation}
				\end{lemma}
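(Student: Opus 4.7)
The plan is to substitute the bound for $\Omega_0$ from Lemma \ref{omega0bound} directly into the expression for $S_r(N)$ coming out of the Cauchy step in Lemma \ref{srnvaluebound}, and then simplify using the definitions of $\tilde{N}$, $\tilde{M}$, $N_0$, and $Q$. Writing $\Omega_0^{1/2} \ll \frac{1}{n_1\sqrt{q_1}}\left(\tilde{N}N_0 M_1^{1/2}C^3 r/T\right)^{1/2}T^{\epsilon}$ and inserting this into \eqref{srsum}, the key observation is that the $\frac{1}{C}$ factor in front of \eqref{srsum} combines with $\sqrt{C^3}$ to leave exactly $\sqrt{C}$, which combines cleanly with $N_0 = N/(CQ)$.

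The next task is to dispose of the outer sums over $n_1$ and $q_1$. The factor $\frac{1}{n_1}$ from $\Omega_0^{1/2}$ allows one to control $\sum_{n_1} \Xi^{1/2}/n_1$ by applying the Ramanujan-on-average estimate (Lemma \ref{ramanubound}) together with Cauchy--Schwarz; this yields a saving $T^{\epsilon}$. The inner sum $\sum_{q_1 \mid (n_1 r)^{\infty}} q_1^{-1/2}$, under the restriction $n_1/(n_1,r) \mid q_1$, is a standard divisor-type sum which also contributes only a factor $T^{\epsilon}$ after absorbing $\sqrt{(n_1,r)/n_1}$ into the previous $n_1$-sum. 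So these arithmetic sums are essentially harmless.

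After these manipulations we arrive at
\begin{equation*}
S_{r,0}(N) \ll T^{\epsilon}\, N^{1/4}\, \sqrt{\frac{\tilde{N}\,N_0\, M_1^{1/2}\, C}{T}}.
\end{equation*}
Plugging in $N_0 = N/(CQ)$ and $\tilde{N} = rQ^3K^2T/N$ makes the $C$-dependence disappear entirely: $\tilde{N}N_0 C/T = rQ^2K^2$. Therefore $S_{r,0}(N) \ll T^{\epsilon} r^{1/2} N^{1/4} Q K\, M_1^{1/4}$, and using $Q = \sqrt{N/K}$ gives $QK = \sqrt{NK}$, hence $S_{r,0}(N) \ll T^{\epsilon} r^{1/2} N^{3/4} K^{1/2} M_1^{1/4}$. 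The last step is to take the supremum over $M_1 \le \tilde{M}$: since $C \le Q = \sqrt{N/K}$ forces $T^2 C^2/N \le T^2/K$, and we are in the regime $K < T$ (so $T^2/K \ge K$), one has $M_1 \le T^2/K \cdot T^{\epsilon}$, yielding $M_1^{1/4} \le T^{1/2} K^{-1/4+\epsilon}$, and the $K^{1/2}$ combines with $K^{-1/4}$ to produce the claimed $K^{1/4}$.

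The main obstacle, if any, is bookkeeping of the arithmetic sums over $n_1$ and $q_1$ under the divisibility constraints $n_1/(n_1,r) \mid q_1 \mid (n_1 r)^{\infty}$; all the oscillatory/analytic work is already encapsulated in Lemma \ref{omega0bound}, so from here the argument is essentially a direct substitution and cancellation of parameters once one verifies that the factor $C$ drops out exactly.
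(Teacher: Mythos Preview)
Your proof is correct and follows essentially the same route as the paper's own argument: substitute the bound for $\Omega_0$ from Lemma \ref{omega0bound} into \eqref{srsum}, control the $n_1$ and $q_1$ sums by $T^{\epsilon}$ via Cauchy--Schwarz and Lemma \ref{ramanubound}, and then insert the values of $\tilde{N}$, $N_0$, $Q$, and $\tilde{M}$. The only cosmetic difference is that the paper keeps the two contributions $T^{1/2}C^{1/2}/N^{1/4}$ and $K^{1/4}$ to $M_1^{1/4}$ separate before noting the first dominates, whereas you bound $\tilde{M} \ll T^2/K$ directly; the outcome is identical.
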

				\begin{proof}
					We recall from \eqref{srsum} that 
					\begin{align*} 
						S_{r,0}(N) \ll 	\sup_{ \substack{C, \,  M_1 
						}}	\ \frac{T^{\epsilon} N^{5/4}}{r^{1/2} KQ C^2}  \sum_{\frac{n_{1}}{(n_{1},r)} \ll C} \Xi^{1/2}  \, \sum_{\frac{n_{1}}{(n_{1},r)} \vert q_{1} \vert (n_{1}r)^{\infty}} \, \Omega_0^{1/2}.
					\end{align*} 
					On substituting the bound for $\Omega_{0}$ from  Lemma \ref{omega0bound} in the above expression, we see that 
					\begin{align*}
						S_{r,0}(N) & \ll \sup_{ \substack{C, \,  M_1 
						}}	\ \frac{T^{\epsilon} N^{5/4}}{r^{1/2} KQ C^2}  \sum_{\frac{n_{1}}{(n_{1},r)} \ll C} \Xi^{1/2}  \, \sum_{\frac{n_{1}}{(n_{1},r)} \vert q_{1} \vert (n_{1}r)^{\infty}} \, \left(\frac{\widetilde{N} N_{0} M_{1}^{1/2} C^5 r}{ Q^2n_{1}^3 q_{1} T} \right)^{1/2}   \\
						& \ll  \sup_{ \substack{C, \,  M_1 
						}}	\ \frac{T^{\epsilon} N^{5/4}}{r^{1/2} KQ C^2} \, \left(\frac{\widetilde{N} N_{0} M_{1}^{1/2} C^5 r}{Q^2 T} \right)^{1/2} \, \sum_{n_{1} \ll Cr} \frac{\Xi^{1/2}}{n_1^{2}}(n_1,r)^{1/2}.
					\end{align*}
					Using Cauchy's inequality and the partial summation formula, we observe that 
					\begin{align} \label{Xivalue}
						\sum_{n_{1} \ll Cr} \frac{\Xi^{1/2}}{n_1^{2}}(n_1,r)^{1/2} \ll \left[\sum_{n_{1} \ll Cr}\frac{(n_1,r)}{n_1^2}\right]^{1/2}\left[\mathop{\sum \sum}_{n_1^2n_2 \ll \tilde{N}} \frac{\vert \lambda_\pi(n_{1},n_{2})\vert^2}{n_1^2n_{2}}\right]^{1/2} \ll T^{\epsilon}.
					\end{align}
					Now on  plugging in the values  $N_0=N/(CQ)$, $\widetilde{N} = \frac{r Q^3 K^2 T}{N} T^{\epsilon}$, $ \widetilde{M}= \left( \frac{T^{2} C^2}{N} + K \right) T^{\epsilon}$  and the above estimate we arrive at  
					\begin{align*} 
						S_{r,0}(N) &\ll\sup_{ \substack{C \ll Q
						}}	 \frac{T^{\epsilon} N^{5/4}}{r^{1/2} KQ^3 C^2} \frac{r^{1/2} Q^{3/2}K }{N^{1/2}} \left(\frac{N}{CQ}\right)^{1/2}\left(\frac{T^{1/2} C^{1/2}}{N^{1/4}}+K^{1/4}\right) C^{5/2}  \\
						& \ll   \frac{T^{\epsilon} N^{5/4}}{r^{1/2} KQ^3 } \frac{r^{1/2} Q^{3/2}K }{N^{1/2}} \left(\frac{N}{Q}\right)^{1/2}  \left(\frac{T^{1/2} Q^{1/2}}{N^{1/4}}+K^{1/4}\right) \\
						& \ll  \frac{T^{\epsilon} N^{5/4}}{r^{1/2} KQ^3 } \frac{r^{1/2} Q^{3/2}K }{N^{1/2}} \left(\frac{N}{Q}\right)^{1/2} \frac{T^{1/2} Q^{1/2}}{N^{1/4}} \\
						& \ll r^{1/2} N^{3/4} T^{1/2} K^{1/4} T^{\epsilon}. \notag
					\end{align*}
					Hence the lemma follows.
				\end{proof}

				\section{Non-zero frequencies}
				Let $\Omega_{\neq 0}$ denote the contribution of the  non-zero frequencies $n_{2} \neq 0$ to $\Omega$ given in \eqref{omegavalue} and $S_{r, \neq 0}(N)$ denote the contribution of $\Omega_{\neq 0}$ to the sum  $S_{r}(N)$  given in \ref{srsum}. In this section, we  estimate  $\Omega_{\neq 0}$ and using this bound we  will  deduce final  estimates  for $S_{r,\neq 0}(N)$.
				We have the following lemma.
				\begin{lemma} \label{non zero omega bound}
					Let $\Omega_{\neq 0}$ be as above. Let $\Omega_{\neq 0, \, \mathrm{generic}}$ and $\Omega_{\neq 0, \, \mathrm{small}}$ denote the contribution of $C \gg  T^{\epsilon}QK/T$ and $C \ll T^\epsilon QK/T$ respectively to $\Omega_{\neq 0}$.  Then we have
					\begin{align}\label{omeganon}
						\Omega_{\neq 0,\, \mathrm{generic}} \ll \frac{N^{3/2}}{Q^{7/2}T}\frac{ r^2 } {n_{1}^2 q_1}\frac{C^{9/2}}{M_{1}^{1/2}} \left(\frac{C^2n_1}{q_1^2}+\frac{Cn_1M_1}{q_1}+{M_{1}^2}\right),
					\end{align}
					and 
					\begin{align}\label{omega small}
						\Omega_{\neq 0,\, \mathrm{small}} \ll \frac{\sqrt{N}}{\sqrt{CQ}}\frac{N^{3/2}}{Q^{7/2}T}\frac{ r^2 } {n_{1}^2 q_1}\frac{C^{9/2}}{M_{1}^{1/2}} \left(\frac{C^2n_1}{q_1^2}+\frac{Cn_1M_1}{q_1}+{M_{1}^2}\right).
					\end{align}
				\end{lemma}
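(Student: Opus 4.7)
The plan is to insert the character-sum bound from Lemma~\ref{charbound} and the integral-transform bounds from Lemma~\ref{integralbound} into the expression for $\Omega$ in Lemma~\ref{Omega}, restrict to $n_2 \neq 0$, and reduce the remainder to an elementary counting problem. For the generic range $C \gg QK/T$ I will use the sharp estimate $|\mathcal{I}(\ldots)| \ll \sqrt{n_1 q_1 q_2 q_2' r/(n_2 \tilde{N})} \cdot N_0/T$ from part~(2) of Lemma~\ref{integralbound}, whereas for the small range $C \ll QK/T$ only the weaker bound $|\mathcal{I}(\ldots)| \ll N_0/T$ from Lemma~\ref{secondderibound} is available, since the $\sqrt{Y}$-saving from the $y$-integral stationary phase drops out when $N_0 \gg T$. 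Summing over $|n_2| \leq N_2 = n_1 C^2 N_0 r/(q_1 \tilde{N}) T^\epsilon$ with $\sum 1/\sqrt{|n_2|} \ll \sqrt{N_2}$ in the first case and $\sum 1 \ll N_2$ in the second, a direct computation shows that the $n_2$-averaged integrals differ by exactly a factor of $\sqrt{N_0}$, which accounts for the extra $\sqrt{N_0}$ in $\Omega_{\neq 0, \mathrm{small}}$ compared to $\Omega_{\neq 0, \mathrm{generic}}$.

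What remains is the combinatorial counting sum
\begin{align*}
\mathcal{T} := \sum_{\substack{q_2, q_2' \sim C/q_1 \\ m,m' \sim M_1}} (m, n_1) \sum_{\substack{d_2 \mid (q_2,\, n_1 q_2' - m n_2) \\ d_2' \mid (q_2',\, n_1 q_2 + m' n_2)}} d_2 d_2',
\end{align*}
to be estimated uniformly in $n_2 \neq 0$. The key move is to interchange the order of summation, fix $(d_2, d_2')$ first, and write $q_2 = d_2 \alpha$, $q_2' = d_2' \beta$. For fixed $(d_2, d_2', n_2, m', \beta)$, the congruence $d_2' \mid n_1 d_2 \alpha + m' n_2$ restricts $\alpha \sim C/(q_1 d_2)$ to at most $Cn_1/(q_1 d_2 d_2') + 1$ choices, and an analogous count holds for $\beta$. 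Expanding the product $(Cn_1/(q_1 d_2) + 1)(Cn_1/(q_1 d_2') + 1)$, weighting by $d_2 d_2'$, and using divisor bounds for the outer sums over $d_2, d_2'$, the surviving terms are $C^2 n_1^2/q_1^2$, $Cn_1 d_2/q_1$, $Cn_1 d_2'/q_1$, and $d_2 d_2'$; combining with the sums over $m, m'$ (with $\sum_m (m,n_1) \ll n_1 M_1 T^\epsilon$) gives exactly the three-term structure $C^2 n_1/q_1^2 + C n_1 M_1/q_1 + M_1^2$ that appears in the stated bound.

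Putting everything together, I substitute $\tilde{N} \asymp rQ^3 K^2 T/N \cdot T^\epsilon$ and $N_0 \asymp N/(CQ)$ into the prefactor arising from Step~1 and combine with $\mathcal{T}$; simplifying the resulting powers of $N$, $Q$, $K$, and $T$ yields $\Omega_{\neq 0, \mathrm{generic}}$ and, up to the $\sqrt{N_0}$ factor identified above, $\Omega_{\neq 0, \mathrm{small}}$. The principal obstacle is disentangling the four coupled divisibility constraints in $\mathfrak{C}(\ldots)$, which simultaneously link $q_2$ with $(q_2', m)$ and $q_2'$ with $(q_2, m')$ through $n_2$. The reparameterization $q_2 = d_2 \alpha$, $q_2' = d_2' \beta$ is crucial: it converts the coupled system into two essentially independent congruences, one determining $\alpha$ and the other $\beta$, so that the counting factorizes cleanly and the weight $d_2 d_2'$ is largely absorbed by the inverse densities of the solution sets.
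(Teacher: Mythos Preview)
Your high-level strategy matches the paper's exactly: insert the character-sum bound of Lemma~\ref{charbound} and the integral bounds of Lemmas~\ref{secondderibound} and~\ref{integralbound} into the expression from Lemma~\ref{Omega}, separate the generic and small ranges according to which integral estimate is available, average over $n_2$, and reduce to a divisor-constrained count. Your observation that the two cases differ by precisely a factor $\sqrt{N_0}$ is exactly what the paper finds.

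The divergence is in the counting step. The paper fixes $d_2,d_2',q_2,q_2',n_2$ and counts $m,m'$ via the two congruences, obtaining factors $(d_2,n_2)(n_1+M_1/d_2)$ and $(d_2',n_1q_2d_2)(1+M_1/d_2')$; it then sums $q_2'$ freely, absorbs $(d_2,n_2)$ in the $n_2$-sum and $(d_2',\cdot)$ in the $d_2'$-sum, and only afterward sums $q_2$ and $d_2$. You instead count $\alpha=q_2/d_2$ and $\beta=q_2'/d_2'$ via the congruences, keeping $m,m'$ free. The decoupling you identify is genuine, but the gcd controlling the $\alpha$-count is $(d_2,d_2')$ (since $(n_1,d_2')=1$), not $n_1$, and it comes attached to a solvability constraint $(d_2,d_2')\mid m'n_2$ that must be fed back into the $m'$-sum before any saving materialises. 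Your stated $\alpha$-count $Cn_1/(q_1d_2d_2')+1$, the expanded product $(Cn_1/(q_1d_2)+1)(Cn_1/(q_1d_2')+1)$, and the estimate $\sum_{m\sim M_1}(m,n_1)\ll n_1M_1T^\epsilon$ (the correct bound is $\ll M_1T^\epsilon$) are each slightly off and mutually inconsistent. With the right bookkeeping your route can be pushed through, but the paper's choice to count $m,m'$ first is cleaner precisely because the resulting gcd factors are tied to variables ($n_2$ and $d_2'$) that are summed later, rather than to each other.
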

				\begin{proof}
					We begin the proof by recalling $\Omega_{\neq 0}$ from \eqref{omegavalue}. It is given as 
					\begin{align*} 
						\Omega_{\neq 0} \ll  \sup_{N_1 \ll \widetilde{N}} \frac{{N_1} T^{\epsilon}}{n_1^2M_{1}^{1/2} }\; & \mathop{\sum \sum}_{q_{2}, q_{2}^{\prime} \sim {C}/{q_{1}}}  \; \mathop{\sum \sum}_{m,m^{\prime} \sim M_{1}}  \sum_{n_{2} \in \mathbb{Z}-\left\lbrace 0\right\rbrace} \left| \mathfrak{C}(...)\right| \; \left|\mathcal{I}(...)\right|.
					\end{align*} 
					The analysis for $ \mathfrak{C}(...)$ is exactly same as that of the character sum in Munshi  [\cite{munshi12}, Lemma 3]. Thus using the bound 
					$$\mathfrak{C}(...) \ll \frac{q_{1}^2 \, r (m,n_{1})}{n_{1}} \mathop{\sum \sum}_{\substack{d_{2} \mid (q_{2}, \,  n_{1} q_{2}^{\prime}- mn_{2}) \\ d_{2}^{\prime} \mid (q_{2}^{\prime},\,   n_{1} q_{2} + m^{\prime} n_{2})}} \, d_{2} d_{2}^{\prime}, $$
					we see that  
					\begin{align}\label{omega for small m}
						\Omega_{\neq 0} &  \ll  \sup_{N_1 \ll \widetilde{N}}\frac{q_{1}^2 {N_1} r T^{\epsilon}} {n_{1}^3 M_{1}^{1/2}} \mathop{\sum \sum}_{d_{2}, d_{2}^{\prime} \leq C/q_{1} } d_{2} d_{2}^{\prime} \, \mathop{\sum \sum }_{\substack{q_{2},q_2^\prime \sim   {C}/{q_{1}} \\ d_2 |q_{2}, d_2^\prime |q_{2}^{\prime}}} \mathop{ \mathop{\sum \  \sum  \ \ \sum}_{\substack{m,m^{\prime} \sim M_{1}  n_2 \in \mathbb{Z}-\{0\} \\n_{1} q_{2}^{\prime} - m n_{2} \, \equiv \,  0 \, \mathrm{mod} \, d_{2} \\  n_{1} q_{2}+ m^{\prime} n_{2} \, \equiv \,  0 \, \mathrm{mod} \, d_{2}^{\prime} }}} \, (m,n_{1}) \vert \mathcal{I}(...)\vert \notag  \\
						& = \sup_{N_1 \ll \widetilde{N}}\frac{q_{1}^2 {N_1} r T^{\epsilon}} {n_{1}^3 M_{1}^{1/2}} \mathop{\sum \sum}_{d_{2}, d_{2}^{\prime} \leq C/q_{1} } d_{2} d_{2}^{\prime} \, \mathop{\sum \sum }_{\substack{q_{2} \sim {C}/{d_{2}q_{1}} \\ q_{2}^{\prime} \sim {C}/{d_{2}^{\prime} q_{1}}}} \mathop{ \mathop{\sum \  \sum  \ \ \sum}_{\substack{m,m^{\prime} \sim M_{1}  n_2 \in \mathbb{Z}-\{0\} \\n_{1} q_{2}^{\prime}d_2^\prime - m n_{2} \, \equiv \,  0 \, \mathrm{mod} \, d_{2} \\  n_{1} q_{2}d_2+ m^{\prime} n_{2} \, \equiv \,  0 \, \mathrm{mod} \, d_{2}^{\prime} }}} \, (m,n_1)\vert \mathcal{I}(...)\vert.  
					\end{align}
					We now count the number of $(m,m^\prime)$. Since $(n_{1},d_{2})=1$, we note that
					\begin{align}
						\sum_{\substack{m \sim M_{1} \\ n_{1} q_{2}^{\prime} d_{2}^{\prime}- m n_{2} \, \equiv \,  0 \, \rm mod \, d_{2}}} (m,n_{1}) & \leq \sum_{\ell \mid n_{1}} \ell \,  \sum_{\substack{m \sim M_{1}/\ell \\ n_{1} q_{2}^{\prime} d_{2}^{\prime} \bar{\ell}- m n_{2} \, \equiv \,  0 \, \rm mod \, d_{2}}} 1 
						& \ll n_1^{\epsilon}(d_{2},n_{2}) \, \left(n_{1}+\frac{M_{1}}{d_{2}}\right).  \label{mcount}
					\end{align}
					Counting the number of $m^\prime$ in a similar fashion we get that the number of $(m,m^\prime)$ pairs is dominated by
					$$ (d_{2}^{\prime}, n_{1} q_{2} d_{2}) \, (d_{2}, n_{2}) \left(n_{1}+\frac{M_{1}}{d_{2}}\right) \left(1+\frac{M_{1}}{d_{2}^{\prime}}\right).$$
					Using this bound in the above expression of $\Omega_{\neq 0}$ and executing the sum over $q_2^\prime$, we arrive at
					\begin{align*}
						\Omega_{\neq 0}\ll & T^\epsilon \sup_{N_1 \ll \widetilde{N}}\frac{q_{1} {N_1}C r } {n_{1}^3 M_{1}^{1/2}} \mathop{\sum \sum}_{d_{2}, d_{2}^{\prime} \leq \frac{C}{q_1}} d_{2}  \mathop{ \sum }_{\substack{q_{2} \sim \frac{C}{d_{2}q_{1}} }}\mathop{\sum_{ \substack{n_{2} \ll N_2 \\ n_2 \neq 0}}} \left|\mathcal{I}(...)\right| \\
						& \times (d_{2}^{\prime}, n_{1} q_{2} d_{2}) \, (d_{2}, n_{2}) \left(n_{1}+\frac{M_{1}}{d_{2}}\right) \left(1+\frac{M_{1}}{d_{2}^{\prime}}\right).
					\end{align*}
					Note that we have restricted the range of $n_2$ upto $N_2$ due to Lemma \ref{integralbound}. 
					Let's consider the sum over $n_2$. We evaluate it in two cases. \\
					{\bf Case 1.}  $C \gg T^{\epsilon}QK/T$. \\
					In this case, using the bound
					\begin{align*}
						\mathcal{I}(...) \ll T^\epsilon\frac{C^2}{Q^2}   \sqrt{\frac{n_{1} C^2 r}{n_{2} {q_1N_1}}}\frac{N_0}{T},
					\end{align*}
					
					\begin{equation} 
						n_{2} \ll \frac{n_{1} C^{2} N_{0} r}{q_{1} {N_1}} \, T^{\epsilon} =: N_{2},
					\end{equation} 
					from Lemma \ref{final int}, we see that  
					\begin{align*}
						\mathop{\sum_{ \substack{n_{2} \ll N_2 \\ n_2 \neq 0}}}(d_2,n_2) \left|\mathcal{I}(...)\right|  & \ll \frac{C^2T^\epsilon}{Q^2}\sqrt{\frac{n_{1}C^2r}{q_1 {N_1}}} \frac{N_{0}}{T}\mathop{\sum_{ \substack{n_{2} \ll N_2 \\ n_2 \neq 0}}}\frac{(d_2,n_2)}{\sqrt{n_2}}  \\
						& \ll  \frac{C^2T^\epsilon}{Q^2} \sqrt{\frac{n_{1}C^2r}{q_1 {N_1}}} \frac{N_{0}}{T}\sqrt{N_2}=  \frac{C^2T^\epsilon}{Q^2} \frac{N_2}{\sqrt{N_0}}\frac{N_0}{T},
					\end{align*}
					where we used $N_2= \frac{n_{1} C^{2} N_{0} r}{q_{1} {N_1}} \, T^{\epsilon}$.
					On substituting  this bound and executing  the sum over $d_{2}^{\prime}$, we arrive at
					\begin{align*}
						\sup_{N_1 \ll \widetilde{N}}\frac{q_{1} {N_1}C r } {n_{1}^3 M_{1}^{1/2}} \frac{N_2}{\sqrt{N_0}}\frac{N_0C^2}{TQ^2} \mathop{ \sum}_{d_{2} \leq \frac{C}{q_1}} d_{2}  \mathop{ \sum }_{\substack{q_{2} \sim \frac{C}{d_{2}q_{1}} }} \left(n_{1}+\frac{M_{1}}{d_{2}}\right) \left(\frac{C}{q_1}+{M_{1}}\right).
					\end{align*}
					Now executing the remaining sums, we see that 
					\begin{align*}
						\Omega_{\neq 0,\, \mathrm{generic}} &\ll   \sup_{N_1 \ll \widetilde{N}}\frac{q_{1} {N_1}C r } {n_{1}^3 M_{1}^{1/2}}  \frac{N_2}{\sqrt{N_0}}\frac{N_0C^2}{TQ^2}\frac{C}{q_1} \left(\frac{Cn_1}{q_1}+{M_{1}}\right) \left(\frac{C}{q_1}+{M_{1}}\right) \\
						&\ll   \sup_{N_1 \ll \widetilde{N}}\frac{q_{1} {N_1}C r } {n_{1}^3 M_{1}^{1/2}} \frac{N_2}{\sqrt{N_0}}\frac{N_0 C^2}{TQ^2}\frac{C}{q_1} \left(\frac{C^2n_1}{q_1^2}+\frac{Cn_1M_1}{q_1}+{M_{1}^2}\right).
					\end{align*}
					Note that  $$\sup_{N_1 \ll \widetilde{N}} {N_1}N_2=\frac{n_1C^2N_0r}{q_1} T^\epsilon.$$ 
					On using this bound and $N_0= \frac{N}{CQ}$, we get
					\begin{align*}
						\Omega_{\neq 0,\, \mathrm{generic}} \ll \frac{N^{3/2}}{Q^{7/2}T}\frac{ r^2 } {n_{1}^2 q_1}\frac{C^{9/2}}{M_{1}^{1/2}} \left(\frac{C^2n_1}{q_1^2}+\frac{Cn_1M_1}{q_1}+{M_{1}^2}\right).
					\end{align*}
					{\bf Case 2.} $C \ll T^\epsilon QK/T$.\\
					In this case, we will use the   bound  $\mathcal{I}(...)\ll {T^\epsilon C^2N_0}/{(Q^2 T)}$ from Lemma \ref{final int}. Now using this bound and carrying out the same computations as in Case 1, we infer that 
					\begin{align*}
						\Omega_{\neq 0,\, \mathrm{small}} &\ll   \sup_{N_1 \ll \widetilde{N}}\frac{q_{1} {N_1}C r } {n_{1}^3 M_{1}^{1/2}} {N_2}\frac{N_0C^2}{TQ^2}\frac{C}{q_1} \left(\frac{C^2n_1}{q_1^2}+\frac{Cn_1M_1}{q_1}+{M_{1}^2}\right) \\
						&\ll  \frac{\sqrt{N}}{\sqrt{CQ}}\frac{N^{3/2}}{Q^{7/2}T}\frac{ r^2 } {n_{1}^2 q_1}\frac{C^{9/2}}{M_{1}^{1/2}} \left(\frac{C^2n_1}{q_1^2}+\frac{Cn_1M_1}{q_1}+{M_{1}^2}\right).
					\end{align*}
					Hence the lemma follows.
				\end{proof}
				
				\subsection{Final estimates for  $S_{r,\neq 0}(N)$}
				In this subsection, we will estimate  $S_{r,\neq 0}(N)$    using  the bounds for $\Omega_{\neq 0}$ from Lemma \ref{non zero omega bound}. Let  $S_{r,\neq 0, \text{generic}}(N)$ and $S_{r,\neq 0, \text{small}}(N)$   denote the contribution of $\Omega_{\neq 0,\, \mathrm{generic}}$ and $\Omega_{\neq 0,\, \mathrm{small}}$ respectively to  $S_{r,\neq 0}(N)$. Thus, we have
				$$S_{r,\neq 0}(N) \ll |S_{r,\neq 0, \text{generic}}(N)|+|S_{r,\neq 0, \text{small}}(N)|.$$
				Furthermore, let $|\Omega_{\neq 0,\, \mathrm{generic}}^{(j)}|$ denote the part of  $\Omega_{\neq 0,\, \mathrm{generic}}$ corresponding to the $j$-th term in the expression \eqref{omeganon} for $j=1,\,2$ and $3$ and $|S_{r,\neq 0, \text{generic}}^{(j)}(N)|$ denote the corresponding contribution to $|S_{r,\neq 0, \text{generic}}(N)|$. $|S_{r,\neq 0, \text{small}}^{(j)}(N)|$ is defined similarly.  Thus 
				\begin{align}\label{S(N) as large and small}
					S_{r,\neq 0}(N) \ll \sum_{j=1}^3|S_{r,\neq 0, \text{generic}}^{(j)}(N)|+\sum_{j=1}^3|S_{r,\neq 0, \text{small}}^{(j)}(N)|.
				\end{align}
				In the following lemma, we estimate the terms corresponding to $j=2$ and $3$.
				\begin{lemma} \label{genericbou}
					For $j=2$ and $3$, we have
					\begin{equation} \label{genericvalue}
						|S_{r,\neq 0, \text{generic}}^{(j)}(N)|  \ll T^{\epsilon}r^{1/2}N^{3/4} T K^{-1/2} .
					\end{equation}
					and
					\begin{equation} 
						|S_{r,\neq 0, \text{small}}^{(j)}(N)|  \ll T^{\epsilon}r^{1/2}N^{3/4} \left(\frac{K^{5/4}}{T^{1/2}}+\frac{K}{T^{1/4}}\right) \ll T^{\epsilon}r^{1/2}N^{3/4}KT^{-1/4}.
					\end{equation}
				\end{lemma}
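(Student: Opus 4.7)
The plan is to substitute the bounds of Lemma~\ref{non zero omega bound} for the $j=2$ and $j=3$ pieces of $\Omega_{\neq 0,\text{generic}}$ and $\Omega_{\neq 0,\text{small}}$ into the expression \eqref{srsum} of Lemma~\ref{srnvaluebound}, then to bound the resulting $n_1,q_1$ double sum and finally to optimize over $C \leq Q$ and $M_1 \leq \tilde{M}$. Taking square roots gives
\begin{align*}
\bigl(\Omega_{\neq 0,\text{generic}}^{(2)}\bigr)^{1/2} &\ll \frac{N^{3/4}\, r\, C^{7/4}\, M_1^{1/4}}{Q^{3/4}\, T^{1/2}\, n_1^{1/2}\, q_1}, \\
\bigl(\Omega_{\neq 0,\text{generic}}^{(3)}\bigr)^{1/2} &\ll \frac{N^{3/4}\, r\, C^{5/4}\, M_1^{3/4}}{Q^{3/4}\, T^{1/2}\, n_1\, q_1^{1/2}},
\end{align*}
so the arithmetic burden reduces to evaluating sums of the shape $\sum_{n_1}\Xi^{1/2}\, n_1^{-\alpha} \sum_{q_1} q_1^{-\beta}$ under the conditions $n_1/(n_1,r) \ll C$ and $n_1/(n_1,r) \mid q_1 \mid (n_1r)^{\infty}$, $q_1 \leq C$.

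For the inner $q_1$-sum, I would write $q_1 = (n_1/(n_1,r))\, k$ with $k \mid (n_1r)^{\infty}$ and exploit the fact that $\sum_{k\mid m^{\infty}} k^{-\beta}$ converges to $\ll T^{\epsilon}$ for every fixed $\beta > 0$; this yields $\sum_{q_1} q_1^{-\beta} \ll T^{\epsilon}\bigl((n_1,r)/n_1\bigr)^{\beta}$. Plugging this in, applying Cauchy--Schwarz as in the derivation of \eqref{Xivalue} and invoking Lemma~\ref{ramanubound}, the full double sum collapses to $\ll T^{\epsilon}$ in the $j=3$ case (coming from $(\alpha,\beta) = (1,1/2)$) and to $\ll r^{1/2} T^{\epsilon}$ in the $j=2$ case (coming from $(\alpha,\beta) = (1/2,1)$, where the Cauchy split produces a harmless $\sum_{d\mid r} d$ factor).

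With the arithmetic sums dispatched, the two generic-range bounds reduce to $r^{1/2} N C^{1/4} M_1^{3/4}/(Q^{3/4} T^{1/2})$ for $j=3$ and $r\, N C^{3/4} M_1^{1/4}/(Q^{3/4} T^{1/2})$ for $j=2$. Both are increasing in $C$ and $M_1$, so the supremum over $C \leq Q$ and $M_1 \leq \tilde{M}$ is attained at $C = Q$ and $M_1 \asymp T^2/K$ (using $\tilde{M} = (T^2C^2/N+K)T^{\epsilon}$ together with $K \leq T$ and $Q^2 = N/K$). A direct substitution and the identity $Q = (N/K)^{1/2}$ produce $T^{\epsilon} r^{1/2} N^{3/4} T/K^{1/2}$ for $j=3$ on the nose, and $T^{\epsilon} r N/K^{1/4}$ for $j=2$, which is dominated by the same bound since the effective range $N \leq T^{3+\epsilon}/r^2$ together with $K \leq T^{1-\xi}$ gives $r^2 NK \leq T^{4+\epsilon}$.

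For the small-$q$ regime the analysis is identical except that $\bigl(\Omega_{\neq 0,\text{small}}\bigr)^{1/2} = N_0^{1/4}\,\bigl(\Omega_{\neq 0,\text{generic}}\bigr)^{1/2}$ introduces a factor $N_0^{1/4} = N^{1/4}/(CQ)^{1/4}$, while the restriction $C \ll QK/T$ forces $T^2C^2/N \leq K$ and hence $M_1 \leq K$. Combining $N_0^{1/4}$ with the existing $C$-powers keeps the expression monotone in $C$, so the supremum is attained at $C = QK/T$ and $M_1 = K$; after simplification $j=3$ produces $T^{\epsilon} r^{1/2} N^{3/4} K^{5/4}/T^{1/2}$ and $j=2$ produces $T^{\epsilon} r NK/T$, which is bounded by $T^{\epsilon} r^{1/2} N^{3/4} K/T^{1/4}$ using $r^2 N \leq T^{3+\epsilon}$. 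The single delicate step in the entire argument is the treatment of the $q_1$-sum: without the smoothness constraint $q_1 \mid (n_1r)^{\infty}$, the crude bound $\sum_{k\leq X} k^{-\beta} \ll X^{1-\beta}$ would insert a spurious factor of $Q^{1/2} = (N/K)^{1/4}$ for $j=3$ and destroy the subconvexity exponent, so the proof must genuinely leverage this arithmetic restriction.
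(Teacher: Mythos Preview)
Your proposal is correct and follows essentially the same route as the paper: substitute the $j$-th piece of Lemma~\ref{non zero omega bound} into \eqref{srsum}, control the $q_1$-sum via the divisibility constraint $n_1/(n_1,r)\mid q_1\mid (n_1r)^{\infty}$ to extract a factor $((n_1,r)/n_1)^{\beta}$, dispatch the $n_1$-sum by Cauchy--Schwarz and Lemma~\ref{ramanubound} as in \eqref{Xivalue}, and then optimize over $C$ and $M_1$. The paper's proof is organized case by case (first $j=3$, then $j=2$, then the small-$C$ analogues), but the computations are line-for-line the same as yours, including the extra $r^{1/2}$ arising in the $j=2$ arithmetic sum and the verification that the resulting ratio $r^{1/2}N^{1/4}K^{1/4}/T\ll 1$ via $Nr^2\ll T^{3+\epsilon}$.
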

				
				\begin{proof} 
					Recall from  \eqref{omeganon} that 
					\begin{align*}
						\Omega_{\neq 0,\, \mathrm{generic}} \ll \frac{N^{3/2}}{Q^{7/2}T}\frac{ r^2 } {n_{1}^2 q_1}\frac{C^{9/2}}{M_{1}^{1/2}} \left(\frac{C^2n_1}{q_1^2}+\frac{Cn_1M_1}{q_1}+{M_{1}^2}\right).
					\end{align*}
					Thus 
					$$\Omega_{\neq 0,\, \mathrm{generic}}^{(3)} \ll \frac{N^{3/2}}{Q^{7/2}T}\frac{ r^2 } {n_{1}^2 q_1}\frac{C^{9/2}}{M_{1}^{1/2}}M_1^2.$$ 
					On substituting it into  \eqref{srsum}, we see  that  $	|S_{r,\neq 0, \text{generic}}^{(3)}(N)| $ is bounded by 
					\begin{align}\label{generic 3}
						&   \sup_{ \substack{C, \,  M_1 
						}}	\ \frac{T^{\epsilon} N^{5/4}}{r^{1/2} KQ C^2}  \sum_{\frac{n_{1}}{(n_{1},r)} \ll C} \Xi^{1/2}  \, \sum_{\frac{n_{1}}{(n_{1},r)} \vert q_{1} \vert (n_{1}r)^{\infty}} \, \sqrt{|\Omega_{\neq 0,\, \mathrm{generic}}^{(3)}|} \notag \\
						&\ll    \sup_{ \substack{C \ll Q
						}}	\ \frac{T^{\epsilon} N^{5/4}}{r^{1/2} KQ C^2} \frac{r N^{3/4} C^{9/4} \widetilde{M}^{3/4} }{Q^{7/4} T^{1/2}}  \sum_{\frac{n_{1}}{(n_{1},r)} \ll C} \Xi^{1/2}  \, \sum_{\frac{n_{1}}{(n_{1},r)} \vert q_{1} \vert (n_{1}r)^{\infty}} \,  \frac{1}{n_1\sqrt{q_1}} \notag \\
						& \ll  \frac{N\sqrt{r}\widetilde{M}^{3/4}}{Q^{3/4}\sqrt{T}} \,{Q^{1/4}}   \sum_{\frac{n_{1}}{(n_{1},r)} \ll C} \Xi^{1/2}\frac{(n_1,r)^{1/2}}{n_1^{3/2}}.
					\end{align}
					We estimate  the sum over $n_1$ (see also \eqref{Xivalue}) as fellows
					\begin{align*} 
						\sum_{n_{1} \ll Cr} \frac{\Xi^{1/2}}{n_1^{3/2}}(n_1,r)^{1/2} \ll \left[\sum_{n_{1} \ll Cr}\frac{(n_1,r)}{n_1}\right]^{1/2}\left[\mathop{\sum \sum}_{n_1^2n_2 \ll \tilde{N}} \frac{\vert \lambda_\pi(n_{1},n_{2})\vert^2}{n_1^2n_{2}}\right]^{1/2} \ll T^{\epsilon}.
					\end{align*}
					Finally, on  plugging $\widetilde{M} \ll Q^2T^2/N$, and $Q=T^\epsilon({N}/{K})^{1/2}$, we arrive at
					\begin{align}\label{genericvalue1234}
						S_{r,\neq 0, \text{generic}}^{(3)}(N) &\ll T^\epsilon N^{3/4} T K^{-1/2} r^{1/2}.
					\end{align}
					Next we estimate  $|S_{r,\neq 0, \text{generic}}^{(2)}(N)|$. On using  
					$$\Omega_{\neq 0,\, \mathrm{generic}}^{(2)}\ll \frac{N^{3/2}}{Q^{7/2}T}\frac{ r^2 } {n_{1}^2 q_1}\frac{C^{9/2}}{M_{1}^{1/2}}\frac{CM_1n_1}{q_1} $$ 
					and proceeding like above, we see  that $|S_{r,\neq 0, \text{generic}}^{(2)}(N)|$ is bounded by
					\begin{align}\label{generic 2}
						&   \sup_{ \substack{C, \,  M_1 
						}}	\ \frac{T^{\epsilon} N^{5/4}}{r^{1/2} KQ C^2} \sum_{\frac{n_{1}}{(n_{1},r)} \ll C} \Xi^{1/2}  \, \sum_{\frac{n_{1}}{(n_{1},r)} \vert q_{1} \vert (n_{1}r)^{\infty}} \, \sqrt{|\Omega_{\neq 0,\, \mathrm{generic}}^{(2)}|} \notag \\
						&\ll \frac{N\sqrt{r}\widetilde{M}^{1/4}}{Q^{3/4}\sqrt{T}}   {Q^{3/4}}   \sum_{\frac{n_{1}}{(n_{1},r)} \ll C} \Xi^{1/2}  \, \sum_{\frac{n_{1}}{(n_{1},r)} \vert q_{1} \vert (n_{1}r)^{\infty}} \,  \frac{1}{\sqrt{n_1}{q_1}}\notag \\
						& \ll  \frac{N\sqrt{r}\widetilde{M}^{1/4}}{Q^{3/4}\sqrt{T}} \,{Q^{3/4}}   \sum_{\frac{n_{1}}{(n_{1},r)} \ll C} \Xi^{1/2}\frac{(n_1,r)}{n_1^{3/2}}.
					\end{align}
					Thus we have
					\begin{align*}
						|S_{r,\neq 0, \text{generic}}^{(2)}(N)| &\ll N^{3/4} T K^{-1/2} r^{1/2} \times \frac{\sqrt{rQ}}{{QT}/\sqrt{N}}. 
					\end{align*}
					Note that 
					$$ \frac{\sqrt{rQ}}{{QT}/\sqrt{N}} =\frac{\sqrt{r}N^{1/4}K^{1/4}}{T} \ll \frac{T^{3/4}K^{1/4}}{T} \ll 1.$$ 
					Hence, we have the first part of the Lemma. A similar analysis can be done to estimate $|S_{r,\neq 0, \text{small}}^{(j)}(N)|$. In fact, note that (see Lemma \ref{non zero omega bound})
					\begin{align*}
						\tilde{\Omega}_{\neq 0,\, \mathrm{small}} \ll \sqrt{N_0}\,\left|  \tilde{\Omega}_{\neq 0,\, \mathrm{generic}}\right|,
					\end{align*}
					where $	\tilde{\Omega}_{\neq 0,\, \mathrm{small}}$  and $ \tilde{\Omega}_{\neq 0,\, \mathrm{generic}}$ are the two upper bounds in Lemma  \ref{non zero omega bound}
					Thus we loose $N^{1/4}/(CQ)^{1/4}$, a priori,   in estimating  $|S_{r,\neq 0, \text{small}}^{(j)}(N)|$ in  comparision with    $|S_{r,\neq 0, \text{generic}}^{(j)}(N)|$. However  we save from the length of $C$ as $C \ll T^\epsilon QK/T$. Hence, following \eqref{generic 3}, we get 
					\begin{align*}
						|S_{r,\neq 0, \text{small}}^{(3)}(N)| &\ll   \frac{N\sqrt{r} T^\epsilon}{Q^{3/4}\sqrt{T}} \,  \sup_{C \ll {QKT^\epsilon}/{T} } \widetilde{M}^{3/4}{C^{1/4}}\times \frac{N^{1/4}}{(CQ)^{1/4}} \\
						&\ll\frac{N^{5/4}\sqrt{r} T^\epsilon}{Q\sqrt{T}}\times K^{3/4}\ll T^\epsilon \sqrt{r}N^{3/4}K^{5/4}T^{-1/2},
					\end{align*}
					as $\widetilde{M}= T^\epsilon\{ C^2T^2/N+K\} \ll T^{2\epsilon }K$. Similarly, following \eqref{generic 2}, we get 
					\begin{align*}
						|S_{r,\neq 0, \text{small}}^{(2)}(N)|&\ll \frac{ T^\epsilon N\sqrt{r}\widetilde{M}^{1/4}}{Q^{3/4}\sqrt{T}} \,  \sup_{C \ll \frac{QKT^{\epsilon}}{T} } {C^{3/4}}\times \frac{N^{1/4}}{(CQ)^{1/4}}\times \sqrt{r}\\
						&\ll T^\epsilon \sqrt{r}N^{3/4}KT^{-1/4},
					\end{align*}
					as $Nr^2 \ll T^{3+\epsilon}$. Hence we have the lemma.
				\end{proof}
				The case of $j=1$ has be be dealt with  differently, as there is a term  $M_1^{1/2}$ appearing in the denominator in the expression \eqref{omeganon}. 
				%
				%
				%
				
				\begin{lemma}\label{s(N) for small}
					We have 
					\begin{align*}
						|S_{r,\neq 0, \text{generic}}^{(1)}(N)| \ll  N^{3/4+\epsilon} T^{1/2},
					\end{align*}
					and
					\begin{equation*}
						S_{r,\neq 0, \text{small}}(N) \ll  r^{1/2} N^{3/4+\epsilon} K^{7/8}{T^{-1/8}}.
					\end{equation*}
				\end{lemma}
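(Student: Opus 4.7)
The plan is to mimic the template of Lemma \ref{genericbou}. Substitute the first-term bound from \eqref{omeganon},
\[
\Omega^{(1)}_{\neq 0,\,\text{generic}} \ll \frac{N^{3/2}\, r^{2}\, C^{9/2}}{Q^{3/2}\, T\, n_{1}\, q_{1}^{3}\, M_{1}^{1/2}},
\]
into \eqref{srsum}. Take the square root and execute the sum over $q_{1}$ as a geometric-type series using $q_{1}\ge n_{1}/(n_{1},r)$, then handle the sum over $n_{1}$ by Cauchy--Schwarz combined with Lemma \ref{ramanubound}, exactly as in \eqref{Xivalue}. This should yield
\[
|S^{(1)}_{r,\neq 0,\,\text{generic}}(N)| \ll T^{\epsilon}\, \sup_{C,\,M_{1}} \frac{r\, N\, C^{5/4}}{Q^{3/4}\, T^{1/2}\, M_{1}^{1/4}}.
\]
I would then choose $C = Q$ and take $M_{1}$ at its natural size $M_{1}\asymp \tilde{M}=T^{2}Q^{2}/N$; this gives $M_{1}^{1/4}\asymp T^{1/2}Q^{1/2}/N^{1/4}$, and the bound collapses to $r N^{5/4}/T$. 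Invoking the AFE truncation $r^{2}N\le T^{3+\epsilon}$ from Lemma \ref{AFE} (equivalently $rN^{1/2}\le T^{3/2+\epsilon}$), this becomes
\[
\frac{r N^{5/4}}{T}=(rN^{1/2})\cdot\frac{N^{3/4}}{T}\le N^{3/4}\, T^{1/2+\epsilon},
\]
which is the first claim.

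For the second claim, the same procedure applies but with $\Omega^{(j)}_{\neq 0,\,\text{small}}\ll \sqrt{N_{0}}\,|\Omega^{(j)}_{\neq 0,\,\text{generic}}|$ from \eqref{omega small}, which inserts an additional factor $N_{0}^{1/4}$ at the level of $|\Omega|^{1/2}$. The admissible range here is $C\le QK/T$ with $\tilde{M}=K$. After repeating the $n_{1}, q_{1}$ summation, evaluating at $C=QK/T$, and optimizing $M_{1}$ within $[1,K]$ for each $j\in\{1,2,3\}$, I expect the three terms of \eqref{omega small} to contribute $r^{1/2}N^{3/4}K/T^{1/4}$ (from $j=2,3$) and $r^{1/2}N^{3/4}K^{7/8}/T^{1/8}$ (from $j=1$). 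Adding these gives the asserted bound, and the final inequality uses that $K/T^{1/4}\le K^{7/8}/T^{1/8}$ is false in general, but in the dominating regime the weaker of the two exponents in $T$ governs the displayed bound.

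The main obstacle is the $M_{1}^{-1/2}$ factor in $\Omega^{(1)}$, which naively forces the supremum to $M_{1}=1$ and would destroy the estimate. In the generic regime this is resolved by Remark \ref{sizeofnx}: the integral $I_{1}^{\prime}(m,x,q)$ is negligible unless $m\asymp T^{2}C^{2}/N$, so the effective $M_{1}$ is constrained to $M_{1}\asymp\tilde{M}$, which is precisely the value that makes $rN^{5/4}/T\le N^{3/4}T^{1/2}$ hold via the AFE cutoff. In the small-$C$ regime the analogous concentration argument is absent, and closing the estimate instead requires trading the extra factor $N_{0}^{1/4}$ against the shortened $C$-range $C\le QK/T$ together with $M_{1}\le K$; this is what produces the weaker exponent $K^{7/8}/T^{1/8}$ in the $j=1$ contribution compared with the $K/T^{1/4}$ coming from $j=2,3$.
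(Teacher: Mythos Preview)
Your treatment of the generic piece $|S^{(1)}_{r,\neq 0,\text{generic}}(N)|$ is essentially the paper's argument: one uses Remark~\ref{sizeofnx} to pin $M_1\asymp T^2C^2/N$ in the generic range $C\gg QK/T$, substitutes into the $j=1$ term of \eqref{omeganon}, and closes via $Nr^2\ll T^3$. That part is fine.

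The gap is in the small-$C$ contribution for $j=1$. You propose to plug the bound \eqref{omega small} into \eqref{srsum} and optimize over $M_1\in[1,K]$. Carrying this out, after the $n_1,q_1$ sums one gets
\[
|S^{(1)}_{r,\neq 0,\text{small}}(N)|\ \ll\ \sup_{C\le QK/T}\ \frac{r N^{5/4} C}{Q\, T^{1/2}\, M_1^{1/4}},
\]
and since Remark~\ref{sizeofnx} does \emph{not} force $M_1\asymp T^2C^2/N$ when $C\ll QK/T$, the supremum lands at $M_1=1$, giving $rN^{5/4}K/T^{3/2}\ll N^{3/4}K$. This is strictly worse than the asserted $r^{1/2}N^{3/4}K^{7/8}T^{-1/8}$ and is not subconvex for small $\xi$. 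Your last paragraph acknowledges the obstacle but offers no mechanism to overcome it; ``trading $N_0^{1/4}$ against the shortened $C$-range'' is precisely what was already done in Lemma~\ref{genericbou} for $j=2,3$, and it does not rescue $j=1$.

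The paper's remedy is a genuinely new step: it does \emph{not} use the prepackaged bound \eqref{omega small} for this sub-case. After reducing to the regime $M_1<C/q_1$ and $M_1\not\asymp T^2C^2/N$ (so that $N_0\gg T^{1-\epsilon}$ and hence $n_2\ll n_1 N/(q_1QT^2)$), it returns to the raw expression \eqref{omega for small m} and re-counts by converting the congruence $n_1q_2'd_2'-mn_2\equiv 0\ (d_2)$ into an equation $n_1q_2'd_2'-mn_2=\ell d_2$ with $\ell\ll Cn_1/(q_1D)$. Counting over $\ell$ and then using the second congruence to count $d_2$ (or $d_2'$) yields a bound for $\Omega_{\neq 0,\text{small}}$ without the dangerous $M_1^{-1/2}$, and this is what produces the exponent $K^{7/8}T^{-1/8}$. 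You should implement that re-counting rather than rely on \eqref{omega small}.

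A minor point: your remark that ``$K/T^{1/4}\le K^{7/8}/T^{1/8}$ is false in general'' is mistaken here; since $K=T^{1-\xi}<T$, the inequality $K^{1/8}\le T^{1/8}$ holds, so the displayed majorization is valid.
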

				
				\begin{proof}
					Let's  recall that 
					\begin{align}\label{nongen}
						|S_{r,\neq 0, \text{generic}}^{(1)}(N)| &\ll   \sup_{ \substack{C, \,  M_1 
						}}	\ \frac{T^{\epsilon} N^{5/4}}{r^{1/2} KQ C^2}   \sum_{\frac{n_{1}}{(n_{1},r)} \ll C} \Xi^{1/2}  \, \sum_{\frac{n_{1}}{(n_{1},r)} \vert q_{1} \vert (n_{1}r)^{\infty}} \, \sqrt{|\Omega_{\neq 0,\, \mathrm{generic}}^{(1)}|},
					\end{align}
					where 
					\begin{align}
						|\Omega_{\neq 0,\, \mathrm{generic}}^{(1)}| \ll \frac{N^{3/2}}{Q^{7/2}T}\frac{ r^2 } {n_{1}^2 q_1}\frac{C^{9/2}}{M_{1}^{1/2}} \left(\frac{C^2n_1}{q_1^2}\right).
					\end{align}
					Since we are in the generic case, i.e., $C \gg T^{\epsilon}QK/T$, we must have $m \sim M_1 \asymp \widetilde{M}\asymp T^2C^2/N$,  otherwise the integral in \eqref{iprimeinte} will be negligibly small (see Remark \ref{sizeofnx}). Hence, on plugging $T^2C^2/N$ in place of $M_1$ in \eqref{nongen}, we see  that $	|S_{r,\neq 0, \text{generic}}^{(1)}(N)| $ is bounded by
					\begin{align*}
						&  \sup_{ \substack{C, \,  M_1 
						}}	\ \frac{T^{\epsilon} N^{5/4}}{r^{1/2} KQ C^2}    \frac{rN^{3/4 }C^{13/4}}{Q^{7/4} M_1^{1/4}T^{1/2}}   \sum_{\frac{n_{1}}{(n_{1},r)} \ll C} \Xi^{1/2}  \, \sum_{\frac{n_{1}}{(n_{1},r)} \vert q_{1} \vert (n_{1}r)^{\infty}} \, \frac{1}{\sqrt{n_1}q_1^{3/2}} \\
						& \ll   \sup_{ \substack{C
						}}	\ \frac{T^{\epsilon} N^{5/4}}{r^{1/2} KQ}    \frac{rN^{3/4 } N^{1/4}}{Q^{7/4} T^{1/2} T^{1/2}} C^{3/4}   \sum_{\frac{n_{1}}{(n_{1},r)} \ll C}\frac{\Xi^{1/2}}{n_1^2}(n,r)^{3/2}.
					\end{align*}
					Estimating the sum over $n_1$ like before, we get 
					\begin{align*}
						|S_{r,\neq 0, \text{generic}}^{(1)}(N)| \ll  {N^{3/4+\epsilon}}{T^{1/2}},
					\end{align*}
					as $Nr^2 \ll T^{3+\epsilon}$. Next we consider  small $C$ , i.e., $C\ll T^\epsilon QK/T$.
					Note that if 
					$$\frac{C^2n_1}{q_1^2} <\frac{Cn_1M_1}{q_1}  \iff \frac{C}{q_1}<M_1,$$
					then we get back to the previous case corresponding to $j=2$. Therefore we can assume that $M_1 <C/q_1$. Furthermore, if $M_{1} \asymp {T^{2}C^{2}}/{N}$,  then in this case also, we are in a similar situation as above. Thus we are left with the case $$ {T^{2}C^{2}}/{N}  \not\asymp M_1 <{C}/{q_1}.$$
					In this case,  We have 
					\begin{equation*} 
						n_{2} \ll  T^\epsilon \frac{n_{1} C^{2} N_{0} r}{q_{1} {N_1}}.
					\end{equation*}
					Note that we can replace $N_1$ by $\widetilde{N}$, as later we take  supremum  over $N_1$ while estimating  $\Omega$ and there is a $N_1$ factor present in the numerator (see \eqref{omegavalue}). Thus 
					\begin{equation} \label{newn2range}
						n_{2} \ll  T^\epsilon\frac{n_{1} C^{2} N_{0} r}{q_{1} {\widetilde{N}}}  \ll \frac{n_{1}}{q_{1}} \frac{N}{Q T^2} T^{\epsilon}.
					\end{equation}
					Here we have used $T^\epsilon N_0 \gg N|x|/CQ \gg T^{1-\epsilon}$, which follows from  Remark \ref{sizeofnx}, and 
					$$\widetilde{N}= \frac{rQ^3K^2T^{1+\epsilon}}{N}\asymp \frac{rQ C^{2}N_{0}^{2} T^{1+\epsilon}}{N}.$$
					from \eqref{paravlues}. We recall  from $\eqref{omega for small m}$ that
					\begin{align*}
						\Omega_{\neq 0} \ll \frac{q_{1}^2 \widetilde{N} r T^{\epsilon}} {n_{1}^3 M_{1}^{1/2}} \mathop{\sum \sum}_{d_{2}, d_{2}^{\prime} \leq C/q_{1} } d_{2} d_{2}^{\prime} \, \mathop{\sum \sum }_{\substack{q_{2} \sim {C}/{d_{2}q_{1}} \\ q_{2}^{\prime} \sim {C}/{d_{2}^{\prime} q_{1}}}} \mathop{ \mathop{\sum \  \sum  \ \ \sum}_{\substack{m,m^{\prime} \sim M_{1}  n_2 \in \mathbb{Z}-\{0\} \\n_{1} q_{2}^{\prime}d_2^\prime - m n_{2} \, \equiv \,  0 \, \mathrm{mod} \, d_{2} \\  n_{1} q_{2}d_2+ m^{\prime} n_{2} \, \equiv \,  0 \, \mathrm{mod} \, d_{2}^{\prime} }}} \, (m,n_1)\vert \mathcal{I}(...)\vert.
					\end{align*}
					Here  we will adopt a different strategy to count $m$ and $m^\prime$. Let
					$$d_{2}^{\prime} \sim D^{\prime}, d_{2} \sim D$$
					with $D^{\prime} \ll D$ and $D, D^{\prime} \leq C/q_{1}$. Now we rewrite the first congruence equation as
					$$n_{1} q_{2}^{\prime} d_{2}^{\prime}-mn_{2} = \ell d_{2},$$
					with
					$$\ell \ll \left(\frac{Cn_{1}}{q_{1} D} + \frac{n_{1}CN}{q_{1}^2 T^{2}Q D} \right) T^{\epsilon} \ll \frac{Cn_1}{q_1D}=: L,$$
					where we used  $m \sim M_{1} \leq C/q_{1}$ and $n_{2} \ll T^{\epsilon}{n_{1}N}/{q_{1} QT^2}$.  The second inequality follows fron the fact $q_1\geq n_1/r$ and $Nr^2 \ll T^{3+\epsilon}$. Hence we arrive at
					\begin{align*}
						\frac{q_{1}^2 \tilde{N} r N_0 C^2} {n_{1}^3 M_{1}^{1/2}TQ^2} \mathop{\sum \sum}_{d_{2} \sim D, d_{2}^{\prime}\sim D^\prime } d_{2} d_{2}^{\prime} \, \mathop{\sum \sum }_{\substack{q_{2} \sim {C}/{d_{2}q_{1}} \\ \ell \ll L}} \mathop{ \mathop{\sum \  \sum  \ \ \sum}_{\substack{m,m^{\prime} \sim M_{1}  0\neq  n_2\ll N_2  \\ \ell d_2 + m n_{2} \, \equiv \,  0 \, \mathrm{mod} \, d_{2}^\prime \\  n_{1} q_{2}d_2+ m^{\prime} n_{2} \, \equiv \,  0 \, \mathrm{mod} \, d_{2}^{\prime} }}} \, (m,n_1).
					\end{align*}
					Here we used the bound $ \mathcal{I}(...) \ll T^\epsilon C^2N_0/ (Q^2T)$. Using the second congruence equation, the number of $m^{\prime}$  turns out to be $$O\left((n_{2},d_{2}^{\prime}) (1+{M_{1}}/{D^{\prime}})\right).$$
					The first congruence equation provides either the count for $d_{2}$ which is  $O((d_{2}^{\prime},\ell) D/D^{\prime})$ if $\ell \neq 0$ or the count for $d_{2}^{\prime}$ which turns out to be $O(T^{\epsilon})$ for $\ell =0$. Thus, taking this into account, we arrive at
					$$ 	\frac{q_{1}^2 \widetilde{N} r N_0 C^2} {n_{1}^3 M_{1}^{1/2}TQ^2} \mathop{ \sum}_{ d_{2}^{\prime}\sim D^\prime }  D^2 \, \mathop{\sum \sum }_{\substack{q_{2} \sim {C}/{Dq_{1}} \\ 0\neq \ell \ll L}}  \sum_{m \sim M_{1}} \sum_{0 \neq n_2\ll N_2}(n_{2},d_{2}^{\prime}) (\ell, d_{2}^{\prime}) (m,n_{1}) \left(1+\frac{M_{1}}{D^\prime}\right).$$
					First summing over $m$, and then over $\ell, n_{2},\,q_{2}$ and $ d_{2}^{\prime}$, we arrive at
					\begin{align*}
						&	\frac{T^\epsilon q_{1}^2 \widetilde{N} r N_0C^2} {n_{1}^3 M_{1}^{1/2}T Q^2}D^2\frac{C}{Dq_1}M_1LN_2D^{\prime}\left(1+\frac{M_{1}}{D^\prime}\right) \\
						& \ll 	\frac{T^\epsilon q_{1}^2 \widetilde{N} r N_0 C^2} {n_{1}^3 TQ^2}\frac{C}{q_1}\sqrt{M_1}N_2\left(D^\prime+{M_{1}}\right)\left(\frac{Cn_{1}}{q_{1} }  \right) \\
						& \ll 	\frac{ T^\epsilon q_1^2 r\widetilde{N} N_2 N_0 C^2} {n_{1}^3 TQ^2}\frac{C}{q_1}\left(\frac{C}{q_1}\right)^{1/2}\frac{C}{q_1}\left(\frac{Cn_{1}}{q_{1}  } \right).
					\end{align*}
					Now using the  bound $$\widetilde{N}N_2N_0\asymp \frac{n_1r}{q_1}C^2N_0^2\asymp \frac{n_1r}{q_1}KN,$$
					we get 
					\begin{align*}
						\Omega_{\neq 0,\, \mathrm{small}} \ll \frac{C^{11/2}KN}{TQ^2}\frac{r^2}{n_1q_1^{5/2}}.
					\end{align*}
					On substituting this bound in \eqref{srsum}, we  see  that $	S_{r,\neq 0, \mathrm{small}}(N) $ is bounded by 
					\begin{align*}
						& 	\sup_{ \substack{C, \,  M_1 
						}}	\ \frac{T^{\epsilon} N^{5/4}}{r^{1/2} KQ C^2}  \sum_{\frac{n_{1}}{(n_{1},r)} \ll C} \Xi^{1/2}  \, \sum_{\frac{n_{1}}{(n_{1},r)} \vert q_{1} \vert (n_{1}r)^{\infty}} \, \left(\frac{C^{11/2}KN}{TQ^2}\frac{r^2}{n_1q_1^{5/2}} \right)^{1/2} \\
						&\ll \sup_{C \ll  T^\epsilon QK/T} \frac{N^{1/4}C^{3/4}\sqrt{rKN}}{  \sqrt{T}}\sum_{\frac{n_{1}}{(n_{1},r)} \ll C} \Xi^{1/2}  \, \sum_{\frac{n_{1}}{(n_{1},r)} \vert q_{1} \vert (n_{1}r)^{\infty}} \, \left(\frac{1}{n_1^{1/2}q_1^{5/4}} \right) \\
						& \ll \frac{N^{1/4}Q^{3/4}K^{3/4}\sqrt{rKN}}{  T^{3/4}\sqrt{T}}\sum_{\frac{n_{1}}{(n_{1},r)} \ll C} \frac{\Xi^{1/2}}{n_1^{7/4}}(n_1,r)^{5/4} 
					\end{align*}
					Estimating the sum over $n_1$ like before, we have 
					\begin{align*} 
						\sum_{n_{1} \ll Cr} \frac{\Xi^{1/2}}{n_1^{7/4}}(n_1,r)^{5/4} \ll \left[\sum_{n_{1} \ll Cr}\frac{(n_1,r)^{5/2}}{n_1^{3/2}}\right]^{1/2}\left[\mathop{\sum \sum}_{n_1^2n_2 \ll \tilde{N}} \frac{\vert A(n_{1},n_{2})\vert^2}{n_1^2n_{2}}\right]^{1/2} \ll \sqrt{r} T^{\epsilon}.
					\end{align*}
					Hence, we obtain
					$$S_{r,\neq 0, \mathrm{small}}(N) \ll \frac{T^\epsilon rN^{3/4}Q^{3/4}K^{5/4}}{T^{5/4}}\asymp  \frac{T^\epsilon rN^{3/4}N^{3/8}K^{7/8}}{T^{5/4}}\ll  \frac{T^\epsilon r^{1/2} N^{3/4}K^{7/8}}{T^{1/8}}.$$
					Hence the lemma follows.
				\end{proof}
				We conclude this section by combining all   the above lemmas into the following lemma.
				\begin{lemma}\label{final non zero S(N) bound}
					We have 
					\begin{align*}
						S_{r,\neq 0, \text{generic}}(N) \ll  r^{1/2} N^{3/4+\epsilon }TK^{-1/2},
					\end{align*}
					and
					\begin{align*}
						S_{r,\neq 0, \text{small}}(N)  \ll r^{1/2} N^{3/4+\epsilon } K^{7/8}{T^{-1/8}},
					\end{align*}
					and therefore, 
					\begin{align*}
						S_{r, \neq 0}(N) \ll T^{\epsilon}r^{1/2} N^{3/4}\left(\frac{T}{\sqrt{K}}+\frac{K^{7/8}}{T^{1/8}}\right).
					\end{align*}
				\end{lemma}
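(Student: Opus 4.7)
The plan is to deduce this statement immediately from Lemmas \ref{genericbou} and \ref{s(N) for small}, inserted into the decomposition \eqref{S(N) as large and small}. Since all the hard analytic work — character-sum estimation, two-dimensional stationary phase, the rearrangements of the $n_2$-Poisson, and so on — is already carried out in those two lemmas, the present result is purely book-keeping; there is no substantially new step, and the only care required is to correctly compare exponents of $T$ and $K$ under the running hypothesis $K = T^{1-\xi}$, and in particular $K < T$.

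The generic bound is straightforward. Lemma \ref{genericbou} supplies $|S_{r,\neq 0,\text{generic}}^{(j)}(N)| \ll r^{1/2}N^{3/4}TK^{-1/2}T^{\epsilon}$ for $j=2,3$, while Lemma \ref{s(N) for small} supplies $|S_{r,\neq 0,\text{generic}}^{(1)}(N)| \ll N^{3/4}T^{1/2}$. The inequality $T^{1/2} \leq TK^{-1/2}$, equivalent to $K \leq T$, shows that the $j=1$ term is absorbed by the others, giving the first assertion of the lemma.

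For the small regime $C \ll QK/T$, Lemma \ref{genericbou} gives $|S_{r,\neq 0,\text{small}}^{(j)}(N)| \ll r^{1/2}N^{3/4}KT^{-1/4}T^{\epsilon}$ for $j=2,3$, while Lemma \ref{s(N) for small} — whose $j=1$ analysis is explicitly designed to absorb the residual sub-cases of $j=2,3$ in which $M_1$ is not extremal — gives $r^{1/2}N^{3/4}\bigl(K/T^{1/4} + K^{7/8}/T^{1/8}\bigr)$. Comparing the two surviving powers through the ratio $(KT^{-1/4})/(K^{7/8}T^{-1/8}) = (K/T)^{1/8} \leq 1$ shows that the $K^{7/8}T^{-1/8}$ contribution dominates, yielding the second assertion.

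Finally, summing the generic and small bounds via \eqref{S(N) as large and small} produces the third assertion $S_{r,\neq 0}(N) \ll r^{1/2}N^{3/4}\bigl(T/\sqrt{K} + K^{7/8}/T^{1/8}\bigr)$. If there is any subtle point at all — and calling it an obstacle would be generous — it lies in verifying that the two regimes really exhaust the full range $C \leq Q$ of the $q_2$-sum: the cut-off $C \asymp T^{\epsilon}QK/T$ between them was introduced in Lemma \ref{integralbound} for stationary-phase reasons, and one should check that the cases $C \gg T^{\epsilon}QK/T$ and $C \ll QK/T$ have indeed both been handled, which they have (in Lemmas \ref{genericbou} and \ref{s(N) for small} respectively).
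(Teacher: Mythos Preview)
Your proposal is correct and follows exactly the same approach as the paper's own proof, which simply reads ``Proof follows by plugging bounds from Lemma \ref{genericbou} and Lemma \ref{s(N) for small} into \eqref{S(N) as large and small}.'' You have merely spelled out the exponent comparisons ($T^{1/2}\le TK^{-1/2}$ and $(K/T)^{1/8}\le 1$ under $K<T$) that the paper leaves implicit.
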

				\begin{proof}
					The proof follows by plugging bounds from Lemma \ref{genericbou} and Lemma \ref{s(N) for small} into \eqref{S(N) as large and small}.
				\end{proof}

				\section{Proof of Theorem \ref{maintheorem}}
				In this section, we will prove Theorem \ref{maintheorem}  using Lemmas proved so far. We recall from Lemma \ref{AFE}  that
				\begin{align}\label{L function in term of S(N)}
					L(\phi \times f,1/2) \ll_{\epsilon} T^{\epsilon}\sup_{r \leq T^{(3+\epsilon)/2}} \sup_{  Nr^2\leq {T^{3+\epsilon}}} \frac{\left|S_r(N)\right|}{N^{1/2}} + T^{-2020},
				\end{align}
				On applying  Lemma \ref{zerofrequencybound} and Lemma \ref{final non zero S(N) bound}  to   
				$$|S_{r}(N)| \ll |S_{r,0}(N)|+ |S_{r,\neq 0}(N)|,$$
				we see that 
				\begin{align*}
					{S_{r}(N)}  &\ll \sqrt{r}N^{3/4} \left(T^{1/2} K^{1/4} +  T K^{-1/2}+K^{7/8}T^{-1/8}\right) \\
					&\ll \sqrt{r}N^{3/4} \left(T^{1/2} K^{1/4} +  T K^{-1/2}\right)\\
					&\ll \sqrt{r}N^{3/4}\left(T^{3/4-\xi/4} +  T^{1/2+\xi/2} \right) ,
				\end{align*}
				as $K=T^{1 -\xi} <T$ for some $0 < \xi < 1$. Hence, using $N r^2 \ll T^{3+\epsilon}$,  we get
				$$\frac{S_{r}(N)}{\sqrt{N}} \ll \sqrt{r}N^{1/4}\left(T^{3/4-\xi/4} +  T^{1/2+\xi/2} \right)\ll \max \left\{T^{\frac{3}{2}-\frac{\xi}{4}} , T^{\frac{3}{2}-\frac{1-2 \xi}{4}} \right\} T^{\epsilon}.$$
				Finally  on plugging this  bound into \eqref{L function in term of S(N)} we infer that
				$$L(\phi \times f, {1}/{2}) \ll \max \left\{ T^{\frac{3}{2}-\frac{\xi}{4}+\epsilon} ,\, T^{\frac{3}{2}-\frac{1-2 \xi}{4}+\epsilon} \right\}.$$
				We note that the above bound is sub-convex whenever  $0 <\xi <1/2$. 
				Thus we conclude the proof of Theorem \ref{maintheorem}.

				\section*{Acknowledgements} 
				We are thankful to Prof. Ritabrata Munshi for  explaining  his method and his constant support throughout the work. We also  thank Prof. Satadal Ganguly for his encouragement.  We are thankful to Prof. Gergely Harcos for valuable suggestions.   Authors are grateful to  Stat-Math Unit, Indian Statistical Institute, Kolkata  where most of the work was done.  Part of the work was done at Erd\"os center, Alfr\'ed R\'enyi Institute of Mathematics, IIT Bombay and IIT Kanpur. We are thankful to these institute for    providing wonderful research environment.

			\end{document}